\theoremstyle{plain}
\newtheorem{theorem}{Theorem}
\newtheorem{lemma}{Lemma}
\newtheorem{corollary}{Corollary}
\newtheorem{proposition}{Proposition}
\theoremstyle{definition}
\newtheorem{definition}{Definition}
\newtheorem{example}{Example}
\newcommand{\B}{{\mathcal B}}
\newcommand{\D}{{\mathcal D}}
\newcommand{\X}{{\mathcal X}}
\renewcommand{\O}{{\mathcal O}}
\newcommand{\lam}{\lambda}
\newcommand{\define}{\stackrel{\mbox{\tiny $\triangle$}}{=}}
\newcommand{\nin}{\noindent}
\newcommand{\mis}{{\min}_{\sum}} 
\newcommand{\mas}{{\max}_{\sum}}
\newcommand{\misd}{{\min}^*_{\sum}} 
\newcommand{\ds}{\Delta_{\sum}}
\newcommand{\rs}{r_{\sum}}
\newcommand{\stsn}{{\sf{STS}}(n)}
\newcommand{\sts}{{\sf{STS}}}
\newcommand{\xb}{(\mathcal{X},\mathcal{B})}
\renewcommand{\sb}{{\sf{sum}}(B)}
\newcommand{\pb}{\pi_B}
\newcommand{\ps}{\pi_S}
\newcommand{\ob}{\oplus_{b}}
\newcommand{\os}{\oplus_{s}}
\newcommand{\lb}{\ell_B}
\newcommand{\ls}{\ell_S}
\begin{document}

\title{MaxMinSum Steiner Systems\\ for Access-Balancing in Distributed Storage}
\author{Hoang Dau and Olgica Milenkovic\\
Coordinated Science Laboratory, University of Illinois at Urbana-Champaign \\
Emails: \{hoangdau, milenkov\}@illinois.edu}
\date{}
\maketitle


\begin{abstract} Many code families such as low-density parity-check codes, fractional repetition codes, batch codes and private information retrieval codes with low storage overhead rely on the use of combinatorial block designs or derivatives thereof. 
In the context of distributed storage applications, one is often faced with system design issues that impose additional constraints on the coding schemes, and therefore on the underlying block designs. 
Here, we address one such problem, pertaining to server access frequency balancing, by introducing a new form of Steiner systems, termed MaxMinSum Steiner systems. MaxMinSum Steiner systems are characterized by the property that the minimum value of the sum of points (elements) within a block is maximized, or that the minimum sum of block indices containing some fixed point is maximized. 
We show that proper relabelings of points in the Bose and Skolem constructions for Steiner triple systems lead to optimal MaxMin values for the sums of interest; for the duals of the designs, we exhibit block labelings that are within a $3/4$ multiplicative factor from the optimum. 
\end{abstract}

\section{Introduction}

Due to their unique combinatorial features, Steiner systems have found many applications in constructive coding theory, ranging from low-density parity-check code design~\cite{vasic2004combinatorial} to distributed storage~\cite{el2010fractional,silberstein2015optimal} to batch codes~\cite{silberstein2016optimal} and low-redundancy private information retrieval~\cite{fazeli2015codes}. In many such applications, one is faced with system constraints that impose additional restrictions on the underlying point-block incidence structures. One such constraint arises in the area of coding for distributed storage~\cite{dimakis2010network}, and pertains to access balancing of servers. Although issues such as delay-storage tradeoffs, volume (load) balancing, and chunk allocation for distributed storage have been studied in depth~\cite{leong2012distributed,joshi2014delay}, the equally relevant issue of access balancing seems to have been overlooked. Unlike the well known load-balancing paradigm, access balancing does not aim to perform near-uniform distribution of data content on the available servers. Instead, it aims to balance the access requests to the disks by using file or file chunk popularity information~\cite{cherkasova2004analysis}. 
The approach is to estimate the popularity of the files or different chunks of the files, and then store files on servers in such a way that each server has roughly the same average access frequency. Clearly, access balancing based on popularity may lead to load disbalances, as a small number of files may be highly popular and hence highly accessed and most appropriately stored alone on a server in order to minimize access collisions. The most desirable solution is to balance both the load and the average access frequency of servers, which is the topic of this contribution. In particular, the focus is on load and access balancing of storage schemes that utilize codes based on Steiner systems and duals of Steiner systems.

The notion of access balancing is best illustrated on the example of Fractional Repetition Codes (FRCs)~\cite{el2010fractional}, which combine Maximum Distance Separable (MDS) and repetition codes with a carefully chosen placement strategy (The probabilistic version of these codes that adopts a random placement strategy is known as DRESS codes~\cite{Pawar_etal2011}). In such a distributed storage system, redundant data is stored on $N$ storage nodes so that one can recover the whole data content from any $K<N$ nodes. Furthermore, the system is designed in such a way that when a node fails, the node may be repaired by creating a new node and by contacting $D$ survivor nodes and downloading (part of) their content to the new node; the parameter $D$ is often referred to as the repair degree. The restored distributed storage system is required to maintain the original MDS and repetition coding properties. In addition, one often requires the failed node recovery procedure to have the \emph{exact repair property}, where the newly added replacement node has the same content as the failed node, and the \emph{minimum-bandwidth regenerating property}, which allows for a low-complexity repair/download process. In the particular setting of FRCs, the replacement node downloads exactly one chunk from $D$ survivor nodes without any processing or additional coding/decoding. An example of a small-scale FRC is shown in Fig.~\ref{fig:balancing}, in the upper half of the diagram. The code construction is straightforward: User information is parsed into chunks, and the chunks are subsequently encoded using an MDS code. This scheme, according to Fig.~\ref{fig:balancing} involves ten chunks and five blocks. The ten chunks are repeated a certain number of times (twice in the given example), and then placed in groups so that no two servers share more than one chunk (four chunks on each server in the given example). A solution to this grouping strategy is shown in the right hand corner of Fig.~\ref{fig:balancing}. If, for example, node $1$ storing the chunks $\{1,2,3,4\}$ fails at some point of time, a replacement node is created which contacts $D=4$ nodes $2,3,4,$ and $5$ to recover the same chunks stored by node $1$. An FRC minimum-bandwidth storage repair system with the properties above may be constructed via Steiner systems or their duals, in which the stored content of a node equals the indices of the blocks containing that node as a point.

\begin{figure}[t]
\centering
\includegraphics[scale=1]{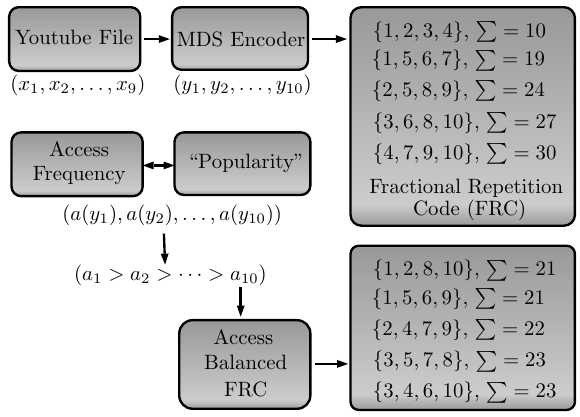}
\caption{An example illustrating the notions of an FRC and access balancing for FRCs. Access frequencies, which are proportional to the popularity scores of the nine data chunks and the parity chunk, are denoted by $a(y_1),a(y_2),\ldots,a(y_{10})$. For simplicity, we assume a strict ordering of the popularities of the data chunks and assign the lowest popularity to the parity chunk. Consequently, in the example, the chunk labels $\{1,2,\ldots,10\}$ reflect their popularities - $10$ represents the lowest popularity chunk, while $1$ represents the highest popularity chunk. Alternatively, the chunk labeled $1$ has the highest access frequency, while the chunk labeled $10$ has the lowest access frequency.}
\label{fig:balancing}
\end{figure}
Assume now that the labels of the chunks are directly proportional to their popularities and hence, their access frequencies. Then, the overall popularity of the files stored on a server may be approximated by the sum of the labels of the chunks stored on the server.
In the example, the popularities of server information differ widely: The smallest overall popularity score equals $10$, while the largest equals $30$. 
To address this disbalance issue, consider the chunk placement shown at the bottom right-hand side of Fig.~\ref{fig:balancing}. The blocks of data chunks still satisfy the property that each pair of chunks appears at most once on any server, but the variation of the average popularity values is significantly smaller than that for the previous setting: The smallest popularity score is $21$, while the largest popularity score is $23$. The popularity, and hence access variation of the placement, went down from $200\%$ to $10\%$. The data chunk assignment is dictated by what we refer to as a \emph{MaxMinSum placement}, which maximizes the minimum sum of chunk popularities on the servers. This placement also turns out to minimize the maximum difference between sums of chunk popularities. The MaxMinSum FRC scheme also ensures natural load balancing in terms of the volume of the data stored, as each server stores the same number of data chunks.

We conclude this example by noting that the main system design parameter to consider when dealing with file popularities is access rate. If file chunks are ranked according to their popularity, their access rates have been shown experimentally to follow a Zipf law, which asserts that the $k$-th most popular chunk has access frequency $1/k^\alpha$, $\alpha>0$ (see, for example~\cite{BreslauWebCaching} and the many follow up papers on this line of work). To ensure accurate server access balancing, one would need to convert popularities into to access rates, and then assign weights to the points in the design equal to their access rates. This is clearly a much more elaborate approach which obscures the underlying mathematical problems, and instead we chose to use the simple mapping described in the example.

The contributions of our work are two-fold. First, we introduce the notion of access balancing in distributed storage systems that use coded information. Second, we propose new access balancing techniques for FRCs based on a new family of combinatorial designs that satisfy constraints on the sum of values within the blocks or sum of labels of blocks containing a certain point. Despite all definitions and concepts being valid for Steiner systems and block designs in general, the focus of the work is on Steiner triple systems (STSs),
and in particular, MaxMinSum (MinMaxSum) Steiner triple systems for which the goal is to maximize (minimize) the minimum (maximum) block-sum in the design. We also describe the related questions of minimizing the difference of block-sums or the ratio of the maximum and minimum block-sums, but relegate their detailed analysis to future work. 

Our main findings are bounds on the value of minimum and maximum block-sums and a proof that shows that for all Steiner triple systems constructed using the Bose and Skolem methods, one can exhibit a point relabeling that achieves the maximum minimum block-sum or minimum maximum block-sum. The results may be extended to the case of duals of STS, which are of equally significant importance in distributed storage applications. 

The paper is organized as follows. Section~\ref{sec:prelim} contains the problem statement and derivations of bounds on various constrained Steiner system sum values. Section~\ref{sec:labels-STS} contains constructive proofs for the existence of MaxMinSum Steiner triple systems for all parameter values for which they exist. These results are extended to duals of Steiner triple systems in Section~\ref{sec:labels-dual}, where it is shown that one may exhibit dual placements only a factor $3/4$ away from the optimal bound. Concluding remarks and open problems are given in Section~\ref{sec:conclusions}.

\section{Problem Statement and Preliminaries} \label{sec:prelim}

We start by defining some basic concepts from design theory and by introducing the new concept of MaxMinSum and related designs. 
The interested reader is referred to~\cite{colbourn2006handbook} for an in-depth treatment of the general subject of combinatorial designs and Steiner systems in particular.

\begin{definition} 
\label{def:design}
A $t$-$(n,k,\lam)$ \emph{design} is a pair $\xb$ where $\X$, the point set, is an $n$-set and $\B$, the block set, is 
a collection of $k$-subsets of $\X$ (blocks) such that every $t$-subset of $\X$ is contained in precisely $\lam$ blocks. 
A $t$-$(n,k,1)$ design is often referred to as a \emph{Steiner system} and denoted by $S(t,k,n)$. A $2$-$(n,3,1)$ design is called a \emph{Steiner triple system} (STS) of order $n$, denoted by $\stsn$. 
\end{definition}  

The parameters of a block design satisfy two basic constraints:
\begin{equation*}
|\B| \times k= n \times r,
\end{equation*}
where $r$ denotes the number of blocks containing a given point, and
\begin{equation*}
\lambda \times \binom{n-1}{t-1}= r \times \binom{k-1}{t-1},
\end{equation*}
which for the case of an $\stsn$ reduce to $|\B|=n(n-1)/6$ and $r = (n-1)/2$.

\begin{definition} 
\label{def:minsum}
For each block $B \subseteq \X$ let $\sb$ denote the block-sum $\sum_{x \in B}x$. Given a $t$-$(n,k,\lam)$ design $\xb$, we introduce the following quantities. 
\begin{itemize}
	\item The min-sum of the design, defined as $\mis(\B) \define \min_{B \in \B} \sb$,
	\item The max-sum of the design, defined as $\mas(\B) \define \max_{B \in \B} \sb$,
	\item The difference-sum of the design, defined as $\ds(\B) \define \mas(\B)-\mis(\B)$,
	\item The ratio-sum of the design, defined as $\rs(\B) \define \mas(\B) / \mis(\B)$.
\end{itemize}
\end{definition} 
For the purpose of access-balancing, the most suitable performance metrics are the difference-sum and ratio-sum. Unfortunately, questions pertaining to these types of designs are also the most difficult ones to analyze. We therefore establish bounds on all four 
metrics of interest, but mostly focus on upper bounds for the min-sum and lower bounds on the max-sum and constructions of 
designs that meet these bounds. We then proceed to show that STS constructions that meet the min-sum bound also offer \emph{order-optimal} difference sum and ratio-sum values.

For integers $a \leq b$, let $[a,b]$ denote the set $\{{a,a+1,\ldots,b\}}$. 
Furthermore, for $|\X| = n$, let $\X = [0,n-1]$ unless stated otherwise. 
An upper bound on the min-sum of a Steiner system is presented in Proposition~\ref{pro:upperbound}. 
The proof follows standard arguments in combinatorial design theory.

\begin{proposition}[Upper Bound on the Min-Sum]
\label{pro:upperbound}
The min-sum of any Steiner system $S(t,k,n)$ satisfies the inequality
\[
\mis \leq \dfrac{n(k-t+1)+k(t-2)}{2}.
\] 
\end{proposition}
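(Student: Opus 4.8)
The plan is to bound the minimum block-sum from above by the \emph{average} block-sum over a carefully chosen subfamily of blocks, exploiting the elementary fact that a minimum never exceeds an average. A first instinct is to average over the entire block set $\B$: since every point lies in exactly $r$ blocks and $|\B|k=nr$, one obtains $\frac{1}{|\B|}\sum_{B\in\B}\sum_{x\in B}x=\frac{k}{n}\cdot\frac{n(n-1)}{2}=\frac{k(n-1)}{2}$. However, a short computation shows $\frac{k(n-1)}{2}-\frac{n(k-t+1)+k(t-2)}{2}=\frac{(t-1)(n-k)}{2}\ge 0$, so this global average is strictly weaker than the target whenever $t\ge 2$ and $k<n$. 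The averaging therefore has to be localized to a subfamily whose blocks are forced to be ``small''.

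The key structural observation I would isolate first is the following. Fix any $(t-1)$-subset $W\subseteq\X$. For each point $y\in\X\setminus W$ the $t$-set $W\cup\{y\}$ lies in a unique block, and that block is the unique block containing $W$ that also contains $y$; consequently the blocks containing $W$ partition $\X\setminus W$ into parts of size $k-(t-1)$. Hence there are exactly $m=\frac{n-t+1}{k-t+1}$ blocks through $W$, and, because each element of $\X\setminus W$ is counted once across these blocks while $W$ itself is counted $m$ times, their block-sums add up to $\sum_{B\supseteq W}\sum_{x\in B}x=(m-1)\,\sigma(W)+\frac{n(n-1)}{2}$, where $\sigma(W)=\sum_{x\in W}x$. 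Averaging, the minimum block-sum among blocks through $W$, and a fortiori $\mis$, is at most $\sigma(W)\bigl(1-\tfrac1m\bigr)+\tfrac1m\cdot\tfrac{n(n-1)}{2}$, which is increasing in $\sigma(W)$ since $m>1$ (the case $k=n$, $m=1$ being trivial). To make the bound as tight as possible I would choose $W$ to be the minimum-sum $(t-1)$-subset, namely $W=\{0,1,\dots,t-2\}$, for which $\sigma(W)=\binom{t-1}{2}$.

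It then remains to substitute $\sigma(W)=\frac{(t-1)(t-2)}{2}$ and $m=\frac{n-t+1}{k-t+1}$ into the average and simplify. This is a routine but slightly tedious identity: clearing the denominator $n-t+1$ and expanding shows that $\bigl[(m-1)\sigma(W)+\frac{n(n-1)}{2}\bigr]/m$ collapses exactly to $\frac{n(k-t+1)+k(t-2)}{2}$, giving the claim; specializing to $t=2$, $k=3$ recovers the $\stsn$ bound $\mis\le n$. I expect the only real obstacle to be conceptual rather than computational: recognizing that one must average over the blocks through a \emph{smallest} $(t-1)$-subset, rather than over all blocks, in order to capture the $(t-1)$-dependence of the bound. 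The partition property of blocks through a fixed $(t-1)$-set and the telescoping of their sums are the two facts that make everything work.
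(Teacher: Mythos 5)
Your proof is correct and takes essentially the same route as the paper's: both bound $\mis$ by the average block-sum over the blocks containing the minimum-sum $(t-1)$-set $\{0,1,\dots,t-2\}$, using the fact that these blocks partition the remaining $n-t+1$ points into parts of size $k-t+1$, and the resulting average simplifies to exactly $\frac{n(k-t+1)+k(t-2)}{2}$. Your additional remarks (that the global average is too weak, and that the bound is monotone in $\sigma(W)$ so the smallest $(t-1)$-set is the right choice) are correct but not needed for the argument itself.
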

\begin{proof} 
Suppose that $\xb$ is a Steiner system $S(t,k,n)$, and $\X = [0,n-1]$. 
Let $\B_{[0,t-2]}$ be the set of all blocks of the system that contain $[0,t-2]$ as a subset.
As $\xb$ is a $t$-$(n,k,1)$ design, the sets $B \setminus [0,t-2]$, $B \in \B_{[0,t-2]}$, partition the set $[t-1,n-1] = \X \setminus [0,t-2]$. 
Hence, $|\B_{[0,t-2]}| = (n-t+1)/(k-t+1)$. Moreover, if $B_{\min}$ is a block in 
$\B_{[0,t-2]}$ with smallest block-sum, then 
\[
\begin{split}
{\sf{sum}}(B_{\min}) &\leq \dfrac{1}{\left|\B_{[0,t-2]}\right|}\sum_{B \in \B_{[0,t-2]}} \sb\\ 
&= (1 + \cdots + (t-2)) + \dfrac{(n+t-2)(n-t+1)/2}{(n-t+1)/(k-t+1)}
= \dfrac{nk-n(t-1)+k(t-2)}{2}. 
\end{split}
\]
The first inequality asserts that the smallest block-sum is bounded from above by the average block-sum, while the 
equality takes into account that the points in $[0,t-2]$ belong to all blocks in $\B_{[0,t-2]}$. This proves the claimed result. 
\end{proof} 
Using a proof that follows along the same lines, with $\B_{[0,t-2]}$ replaced by $\B_{[n-t+1,n-1]}$, where $\B_{[n-t+1,n-1]}$ is the set of all blocks containing $[n-t+1,n-1]$, we arrive at the following result.

\begin{proposition}[Lower Bound on the Max-Sum] \label{pro:lowerbound}
The max-sum of any Steiner system $S(t,k,n)$ satisfies the inequality
\[
\mas \geq \dfrac{nk+nt-n-kt}{2}.
\] 
\end{proposition}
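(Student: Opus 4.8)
The plan is to mirror the argument in the proof of Proposition~\ref{pro:upperbound}, exchanging the roles of the smallest and largest points. With $\X = [0,n-1]$, I would fix the $(t-1)$-set $[n-t+1,n-1]$ of the $t-1$ largest points and let $\B_{[n-t+1,n-1]}$ be the sub-collection of all blocks containing it. The objective is to bound $\mas(\B)$ from below by the average block-sum over this sub-collection, in exact analogy with how the earlier proof bounded $\mis(\B)$ from above by an average.

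First I would establish the two structural facts that power the averaging step. Since $\xb$ is a $t$-$(n,k,1)$ design, every $t$-subset lies in exactly one block; applying this to the $t$-subsets $[n-t+1,n-1]\cup\{x\}$ for $x \in [0,n-t]$ shows that the sets $B \setminus [n-t+1,n-1]$, $B \in \B_{[n-t+1,n-1]}$, partition $[0,n-t]$. Each such block contributes $k-(t-1)=k-t+1$ points to this partition of the $(n-t+1)$-set $[0,n-t]$, so $|\B_{[n-t+1,n-1]}| = (n-t+1)/(k-t+1)$, matching the count in the proof of Proposition~\ref{pro:upperbound}.

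Next I would reverse the direction of the extremal inequality: a block $B_{\max} \in \B_{[n-t+1,n-1]}$ of largest block-sum has ${\sf{sum}}(B_{\max})$ at least the average block-sum over $\B_{[n-t+1,n-1]}$. In this average, the common points $[n-t+1,n-1]$ contribute $\sum_{i=n-t+1}^{n-1} i$ to every block, whereas the remaining points are partitioned and hence contribute their total sum exactly once, divided by $|\B_{[n-t+1,n-1]}|$. This gives
\[
\mas(\B) \;\geq\; \sum_{i=n-t+1}^{n-1} i + \frac{(n-t)(n-t+1)/2}{(n-t+1)/(k-t+1)} = \frac{(t-1)(2n-t)}{2} + \frac{(n-t)(k-t+1)}{2}.
\]

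The only real work, and the step most prone to slips, is the final algebraic simplification confirming that the right-hand side collapses to $(nk+nt-n-kt)/2$; I would expand both products and watch the $t^2$ and the linear-in-$t$ terms cancel. Conceptually there is no genuine obstacle, because the Steiner property is invariant under relabeling the point set: the partition and counting arguments transfer verbatim from the smallest $(t-1)$-set to the largest one, and the single substantive change is replacing the bound ``smallest $\leq$ average'' by ``largest $\geq$ average.''
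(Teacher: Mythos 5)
Your proof is correct and is precisely the argument the paper intends: it explicitly states that Proposition~\ref{pro:lowerbound} follows "along the same lines" as Proposition~\ref{pro:upperbound} with $\B_{[0,t-2]}$ replaced by $\B_{[n-t+1,n-1]}$, which is exactly your partition-and-averaging argument with the inequality direction reversed. Your algebra also checks out: $(t-1)(2n-t)/2 + (n-t)(k-t+1)/2 = (nk+nt-n-kt)/2$.
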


\begin{corollary} 
\label{cr:upperbound}
The min-sum of any $\stsn$ is at most $n$, while its max-sum is at least $2n-3$. Every $\stsn$ with min-sum $n$ must contain $(n-1)/2$ blocks of the form $\{0,i,n-i\}$, $i \in [1,(n-1)/2]$. Moreover, every $\stsn$ with max-sum $2n-3$ must contain $(n-1)/2$ blocks of the form $\{j,n-2-j,n-1\}$, $j \in [0,(n-3)/2]$.  
\end{corollary}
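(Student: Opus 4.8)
The plan is to obtain the two numerical bounds by direct substitution into Propositions~\ref{pro:upperbound} and~\ref{pro:lowerbound}, and then to extract the two structural characterizations from the \emph{equality conditions} in the averaging arguments that underlie those propositions, specialized to $t=2$, $k=3$. Setting $t=2$, $k=3$ in the upper bound of Proposition~\ref{pro:upperbound} gives $\mis \le (2n + 0)/2 = n$, and in the lower bound of Proposition~\ref{pro:lowerbound} gives $\mas \ge (3n + 2n - n - 6)/2 = 2n-3$, which settles the first two assertions.

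For the min-sum characterization I would revisit the averaging step behind Proposition~\ref{pro:upperbound}. When $t=2$ the relevant block collection $\B_{[0,0]}$ is exactly the set of blocks containing the point $0$; since every pair $\{0,x\}$ lies in a unique block, these $(n-1)/2$ blocks partition $[1,n-1]$ into pairs, so the sum of their block-sums is $\sum_{i=1}^{n-1} i = n(n-1)/2$ and their average block-sum is $n$. The proof then rests on the chain
\[
\mis(\B) \;\le\; \min_{B \in \B_{[0,0]}} \sb \;\le\; \frac{1}{|\B_{[0,0]}|}\sum_{B \in \B_{[0,0]}} \sb \;=\; n,
\]
where the first inequality holds because $\B_{[0,0]}\subseteq\B$ and the second because a minimum never exceeds an average. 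If $\mis(\B)=n$, every inequality in this chain is forced to be an equality; in particular the minimum block-sum over $\B_{[0,0]}$ equals the average $n$, and a minimum of a finite multiset equals its mean only when all entries coincide. Hence every block through $0$ has block-sum exactly $n$, i.e.\ is of the form $\{0,a,b\}$ with $a+b=n$, which is necessarily $\{0,i,n-i\}$; as the pairs $\{i,n-i\}$ partition $[1,n-1]$ we recover precisely the $(n-1)/2$ blocks $\{0,i,n-i\}$, $i\in[1,(n-1)/2]$.

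The max-sum characterization is entirely symmetric, using the blocks $\B_{[n-1,n-1]}$ through the point $n-1$, which partition $[0,n-2]$ into pairs with average block-sum $2n-3$. Here the chain reads $\mas(\B) \ge \max_{B \in \B_{[n-1,n-1]}} \sb \ge 2n-3$, and tightness of $\mas(\B)=2n-3$ forces the maximum over $\B_{[n-1,n-1]}$ to equal its average, so every block through $n-1$ has sum exactly $2n-3$, i.e.\ is of the form $\{a,b,n-1\}$ with $a+b=n-2$. Taking $j=\min\{a,b\}$ and using that $n$ is odd (so $n-2$ is not twice an integer and each pair is nondegenerate) yields exactly the $(n-1)/2$ blocks $\{j,n-2-j,n-1\}$, $j\in[0,(n-3)/2]$.

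The only step requiring genuine care is the equality argument: I must verify that tightness of the \emph{global} bound $\mis(\B)\le n$ (respectively $\mas(\B)\ge 2n-3$) propagates through the entire chain, so that not merely one extremal block but \emph{every} block through the distinguished point attains the average. This is immediate once the chain is written as above, since each link is a separately justified inequality; the substantive content is simply the observation that a finite multiset whose minimum (maximum) equals its mean must be constant. Everything else reduces to the two routine average computations and the identification of pairs summing to $n$ and to $n-2$, respectively.
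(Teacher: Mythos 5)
Your proposal is correct and follows essentially the same route as the paper: the numerical bounds come from substituting $t=2$, $k=3$ into Propositions~\ref{pro:upperbound} and~\ref{pro:lowerbound}, and the structural characterizations come from the equality condition in the underlying averaging argument (all blocks through $0$, respectively $n-1$, must attain the average sum $n$, respectively $2n-3$). You merely spell out the chain of inequalities and the ``minimum equals mean forces constancy'' observation more explicitly than the paper does, which is a harmless elaboration rather than a different method.
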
 
\begin{proof} 
The first statement follows directly from Propositions~\ref{pro:upperbound} and~\ref{pro:lowerbound}, with $k = 3$ and $t = 2$. 
Furthermore, from the proofs of these proposition, if an $\stsn$ has min-sum $n$ then every block containing $0$ must have the same block-sum as the average, i.e. a block-sum equal to $n$. Since there are precisely $(n-1)/2$ such blocks, they must be of the form $\{0,i,n-i\}$, $i \in [1,(n-1)/2]$. The same line of reasoning implies that every $\stsn$ with max-sum $2n-3$ must contain $(n-1)/2$ blocks of the form $\{j,n-2-j,n-1\}$, $j \in [0,(n-3)/2]$.
\end{proof} 

Note that bounds on the min-sum and max-sum of $\stsn$ may also be derived by considering the block containing the points $0$ and $1$, and the block containing the points $n-2$ and $n-1$, which must have sums at most $n$ and at least $2n-3$, respectively. Unfortunately, for general values of $k$ and $t$, this simple argument gives weaker bounds than those presented in Proposition~\ref{pro:upperbound} and Proposition~\ref{pro:lowerbound}.

Combining the results for the min-sum and max-sum, one can easily derive bounds on the difference-sum and the ratio-sum of an $\stsn$ of the form: 
$$\ds = \mas - \mis \geq n - 3$$ 
and 
$$\rs = \mas/\mis \geq 2 - 3/n.$$ 
However, a more careful analysis of the blocks containing \emph{either} $0$ \emph{or} $n-1$ yields stronger bounds on the difference-sum and the ratio-sum of an STS. 

\begin{proposition}
\label{pro:dsrs_lowerbound}
The difference-sum and ratio-sum of any $\stsn$ satisfy $\ds \geq n$ and $\rs \geq 2$. 
\end{proposition}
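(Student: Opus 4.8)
The plan is to reduce the ratio-sum bound to the difference-sum bound, and then to prove $\ds \geq n$ by a refined averaging argument over the blocks through the two extreme points $0$ and $n-1$. Throughout I assume $n \geq 7$, since the trivial system with $n = 3$ has a single block (so $\ds = 0$) and every nontrivial $\stsn$ has $n \geq 7$. The ratio-sum bound will then be immediate: Corollary~\ref{cr:upperbound} gives $\mis \leq n$, and since $\mas = \mis + \ds \geq \mis + n$ (once $\ds \geq n$ is known), I obtain
\[
\rs = \frac{\mas}{\mis} \geq \frac{\mis + n}{\mis} = 1 + \frac{n}{\mis} \geq 1 + \frac{n}{n} = 2 .
\]
So the whole proof rests on showing $\ds \geq n$.

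For the difference-sum I would first isolate the unique block $\{0, m, n-1\}$ containing the pair $\{0,n-1\}$, with $m \in [1,n-2]$. Let $\B_0'$ be the blocks containing $0$ but not $n-1$, and $\B_{n-1}'$ the blocks containing $n-1$ but not $0$; each family has $(n-3)/2$ blocks, and each induces a \emph{perfect matching} on the common ground set $S = [1,n-2] \setminus \{m\}$ of size $n-3$. Writing $\Sigma_S = \sum_{x \in S} x$, the average block-sum over $\B_0'$ is $2\Sigma_S/(n-3)$, while the average over $\B_{n-1}'$ is $(n-1) + 2\Sigma_S/(n-3)$, since every block of $\B_{n-1}'$ carries the extra summand $n-1$. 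Bounding the minimum over $\B_0'$ and the maximum over $\B_{n-1}'$ by their averages, and using $\mis \leq \min_{B \in \B_0'}\sb$ together with $\mas \geq \max_{B \in \B_{n-1}'}\sb$, gives
\[
\ds \geq \max_{B \in \B_{n-1}'}\sb - \min_{B \in \B_0'}\sb \geq (n-1),
\]
the difference of the two averages being exactly $n-1$ regardless of $m$.

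The hard part is upgrading this estimate from $n-1$ to $n$. Equality above would force the minimum over $\B_0'$ to equal its average \emph{and} the maximum over $\B_{n-1}'$ to equal its average, i.e. both families would have to consist of blocks of \emph{constant} block-sum. I would rule this out by a uniqueness argument: a perfect matching of $S$ all of whose pairs sum to a fixed constant $c$ is unique, since each $e \in S$ must be matched with $c-e$; moreover, if both families are constant, both are forced to use the \emph{same} constant, namely the common average pair-sum $2\Sigma_S/(n-3)$. But the two matchings coming from $\B_0'$ and $\B_{n-1}'$ are edge-disjoint, because a pair $\{u,v\} \subseteq S$ cannot lie in both $\{0,u,v\}$ and $\{u,v,n-1\}$ without appearing in two distinct blocks, contradicting the Steiner property. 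Two edge-disjoint constant-sum matchings with the same constant cannot coexist, so at least one family is non-constant, the inequality is strict, and integrality of the block-sums promotes it to $\ds \geq n$. This completes both claims.
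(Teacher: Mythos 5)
Your proof is correct, and although it opens exactly as the paper's does --- isolating the unique block $\{0,p,n-1\}$ through both extreme points, averaging the block-sums over the two punctured pencils $\B_0'$ and $\B_{n-1}'$, and noting that the two averages differ by exactly $n-1$ --- the step that upgrades $n-1$ to $n$ is genuinely different. The paper takes the floor of one average and the ceiling of the other, so the bound improves to $n$ automatically unless $n-3$ divides $(n-1)(n-2)-2p$, and then disposes of the residual values of $p$ by inspecting specific auxiliary blocks such as $B_{0,2}$ and $B_{n-2,n-1}$. You instead observe that attaining $\ds=n-1$ would force both punctured pencils to be constant-sum, hence would force both induced perfect matchings of $S=[1,n-2]\setminus\{p\}$ to coincide with the \emph{unique} matching of constant pair-sum equal to the common average over $S$; this contradicts the edge-disjointness of the two matchings guaranteed by the Steiner property, so $\ds=n-1$ is impossible and integrality finishes the argument. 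Your route is cleaner in that it requires no case analysis on $p$; in particular it uniformly covers $p=(n-1)/2$, a value for which $n-3$ also divides $(n-1)(n-2)-2p$ (e.g.\ $p=4$ when $n=9$) but which the paper's two-case split does not explicitly treat. What the paper's approach buys in exchange is explicit information about which blocks are extremal in the near-tight configurations, which your global uniqueness argument does not produce. The reduction of $\rs\ge 2$ to $\ds\ge n$ via $\mis\le n$ is identical in both proofs, and your explicit exclusion of $n=3$ (where the statement degenerates) is consistent with the paper's implicit assumption that the system is nontrivial.
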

\begin{proof} 
Let 
$$\B_0 = \left\{(0,x_i,y_i) \colon i \in [1,(n-1)/2], \; x_i,y_i \in [1,n-1], x_i < y_i\right\}$$ 
and 
$$\B_{n-1} = \left\{(z_i,t_i,n-1) \colon i \in [1,(n-1)/2], \; z_i,t_i \in [0,n-2], z_i < t_i \right\}$$ 
be the sets of all $(n-1)/2$ blocks of the system containing $0$ and $n-1$, respectively. Note that the pairs $\{x_i,y_i\}$, $i \in [1,(n-1)/2]$, partition the set $[1,n-1]$, and the pairs $\{z_i,t_i\}$, $i \in [1,(n-1)/2]$, partition the set $[0,n-2]$.
Let $B_{0,n-1} = \{0,p,n-1\}$, $p \in [1,n-2]$, be the unique block in the system containing both $0$ and $n-1$. Then, the min-sum of the system cannot exceed the average block-sum of $\B_0 \setminus B_{0,n-1}$, that is
\begin{equation} 
\label{eq:b0}
\mis \leq \left\lfloor \dfrac{1}{\frac{n-3}{2}}\sum_{B \in \B_0 \setminus \{B_{0,n-1}\}} \sb \right\rfloor = 
\left\lfloor\dfrac{2}{n-3}\Bigg(\binom{n}{2} - (n-1+p)\Bigg)\right\rfloor = \left\lfloor\dfrac{(n-1)(n-2)-2p}{n-3}\right\rfloor. \notag
\end{equation} 
Moreover, the max-sum of the system must be greater than or equal to the average
block-sum of $\B_{n-1} \setminus B_{0,n-1}$, that is
\begin{equation} 
\label{eq:b1}
\begin{split}
\mas \geq \left\lceil\dfrac{1}{\frac{n-3}{2}}\sum_{B \in \B_{n-1} \setminus \{B_{0,n-1}\}} \sb \right\rceil &= 
n-1 + \left\lceil\dfrac{2}{n-3}\Bigg(\binom{n-1}{2} - p\Bigg)\right\rceil \\ 
&= n-1 + \left\lceil\dfrac{(n-1)(n-2)-2p}{n-3}\right\rceil. \notag
\end{split}
\end{equation} 
Therefore, we have 
\begin{equation}
\label{eq:ds_lowerbound} 
\ds = \mas - \mis \geq (n - 1) + \left\lceil\dfrac{(n-1)(n-2)-2p}{n-3}\right\rceil
- \left\lfloor\dfrac{(n-1)(n-2)-2p}{n-3}\right\rfloor. 
\end{equation} 
Clearly, if $n-3$ does not divide $(n-1)(n-2)-2p$ then \eqref{eq:ds_lowerbound} implies that $\ds \geq n$. Now suppose that $n-3$ does divide $(n-1)(n-2)-2p$, which holds if and only if $p = 1$ or $p = n-2$. We consider these two cases separately. 

\textbf{Case 1.} $p = 1$. Then $B_{0,n-1} = \{0,1,n-1\}$. Let $B_{0,2} = \{0,2,x\}$
and $B_{n-2,n-1} = \{y,n-2,n-1\}$ be the unique blocks containing the pairs $\{0,2\}$ and $\{n-2,n-1\}$, respectively. Since the pair $\{0,n-1\}$ does not belong to $B_{0,2}$, we have $x \leq n-2$. Similarly, since the pairs $\{0,n-1\}$ and $\{1,n-1\}$ do not belong to $B_{n-2,n-1}$, we have $y \geq 2$. If $x \leq n-3$ then $\mis \leq {\sf{sum}}(B_{0,2}) \leq n-1$ and $\mas \geq {\sf{sum}}(B_{n-2,n-1}) \geq 2n-1$, which implies that $\ds \geq n$. If $x = n-2$ then $y \geq 3$ because the pair $(2,n-2)$ now belongs only
to block $B_{0,2}$. In this case, $\mis \leq {\sf{sum}}(B_{0,2}) = n$ and $\mas 
\geq {\sf{sum}}(B_{n-2,n-1}) \geq 2n$, which again implies that $\ds \geq n$. 

\textbf{Case 2.} $p = n-2$. Then $B_{0,n-1} = \{0,n-2,n-1\}$ and $B_{0,1} = \{0,1,x\}$ for $x \leq n-3$. If $x \leq n - 4$, then $\mis \leq {\sf{sum}}(B_{0,1}) \leq n-3$. By Corollary~\ref{cr:upperbound}, $\mas \geq 2n-3$. Therefore, in this case, we have $\ds \geq n$. If $x = n-3$, then $B_{0,1} = \{0,1,n-3\}$ and $\mis \leq {\sf{sum}}(B_{0,1}) = n-2$.
Let $B_{n-3,n-1} = \{y,n-3,n-1\}$. Since neither of the two pairs $\{0,n-1\}$ and $\{1,n-3\}$ belongs to $B_{n-3,n-1}$, we deduce that $y \geq 2$, which leads to $\mas \geq {\sf{sum}}(B_{n-3,n-1}) \geq 2n-2$. Thus, $\ds \geq (2n-2) - (n-2) = n$.

We have hence established that $\ds \geq n$. Based on this bound on the difference-sum, we can prove that $\rs \geq 2$ as follows. By Corollary~\ref{cr:upperbound}, $\mis \leq n$. Moreover, since $\mas - \mis \geq n$, we have
\[
\rs = \dfrac{\mas}{\mis} \geq \dfrac{\mis + n}{\mis} = 1 + \dfrac{n}{\mis} \geq 2, 
\]
which establishes the claimed bound $\rs \geq 2$. 
\end{proof} 

Note that in general, the bounds established in Proposition~\ref{pro:dsrs_lowerbound} are not tight. As an example, for $n = 7$, an exhaustive computer search over all possible permutations of seven points in the unique $\sts(7)$ yields the minimum ratio-sum $15/7 > 2$. For $n = 13$, the search performed on two nonisomorphic $\sts(13)$ establishes that the minimum difference-sum equals $14 > 13$. For $n = 7, 9$, the lower bound on the difference-sum is met, while for $n = 9$, the lower bound on the ratio-sum is met as well. 

\section{Existence of Steiner Triple Systems with Maximum Min-Sum} \label{sec:labels-STS}

In what follows, we focus our attention on constructing $\stsn$ that have min-sums attaining the upper bound stated in Corollary~\ref{pro:upperbound}. We refer to such $\stsn$ as MaxMinSum $\stsn$. Note that by replacing each block $B = \{x,y,z\}$ in a set system $(\X=[0,n-1],\B)$ with the block $B^* = \{n-1-x,n-1-y,n-1-z\}$, one arrives at a new set system $(\X,\B^*)$ with $\mas(\B^*) = 3n-3-\mis(\B)$. Therefore, a construction that produces an $\stsn$ with min-sum equal to $n$ immediately gives rise to an $\stsn$ with max-sum equal to $2n-3$, which also achieves the lower bound stated in Corollary~\ref{cr:upperbound}.

The idea behind our approach is to consider known constructions for $\stsn$ and for each of them find a permutation (relabeling) of the set $\X$ that ensures that every permuted block has a sum greater than or equal to $n=|\X|$. Generally, given an arbitrary $\stsn$, one needs to examine $n!$ permutations, which can be performed in a reasonable amount of time by computer search only for small values of $n$, say $n \leq 13$. Corollary~\ref{cr:upperbound}, however, reduces the search space to $n\big((n-1)/2\big)!\, 2^{(n-1)/2}$. Indeed, one can consider $n$ possible ways to map a point $x \in \X$ to $0$. For each choice of $x$, there are $\big((n-1)/2\big)! \, 2^{(n-1)/2}$ ways to create a one-to-one map between the set $\{B \in \B \colon x \in B\}$ and the set $\big\{\{0,i,n-i\} \colon i \in [1,(n-1)/2]\big\}$. The choice of $x$ and of a map described above uniquely determines the corresponding permutation on $\X$. This reduction in complexity makes a computer search possible for $n \leq 19$, which is helpful in the search for a general solution. 

It was established by Kirkman~\cite{Kirkman1847} that an $\stsn$ exists if and only if $n \equiv 1,3 \pmod n$. Two direct constructions were established by Bose~\cite{Bose1939}, for $n \equiv 3 \pmod 6$, and by Skolem~\cite{Skolem1958} and Hanani~\cite{Hanani1960}, for $n \equiv 1 \pmod 6$. We first present a special case of the Bose Construction (see, for instance~\cite[p.~4]{LindnerRodger1997} or~\cite[p.~127]{Stinson2004}) and describe how to permute (relabel) the points of the resulting system to achieve a maximum value for the min-sum.\\ 

\textbf{The Bose Construction.}
Let $n = 3m \geq 9$, where $m$ is an odd number, so that $n \equiv 3 \pmod 6$. 
We define a binary operation $\ob$ on the set $[0,m-1]$ as follows. 
For $0 \leq x, y \leq m-1$,
\begin{equation} 
\label{eq:binop}
x \ob y \define \frac{m+1}{2}(x+y) \pmod m.
\end{equation} 
The point set equals $\X' = \left\{(x,i) \colon x \in [0,m-1], i \in [0,2]\right\}$. 
The block set $\B'$ of an ${\sf{STS}}(3m)$ is the union of the following two types of blocks.
\begin{itemize}
	\item Type 1: $B_x = \left\{ (x,0), (x,1), (x,2) \right\}$, for every $x \in [0,m-1]$.
	\item Type 2: $B_{x,y,i} = \left\{ (x,i), (y,i), (x\ob y, i + 1 \pmod 3)\right\}$, for every $0 \leq x < y \leq m - 1$ and $i \in [0,2]$. 
\end{itemize}

\begin{figure}[H]
\centering
\includegraphics[scale=1]{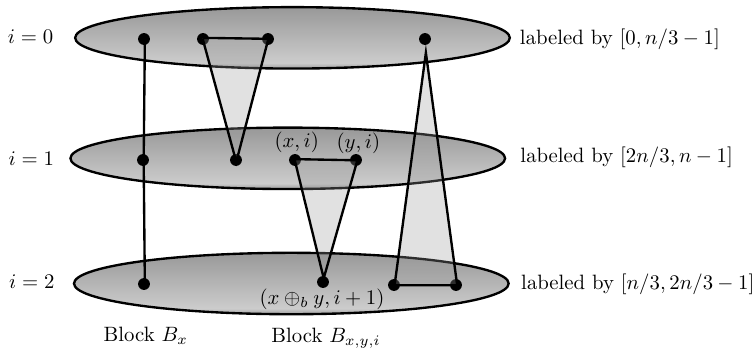}
\caption{Illustration of Bose Construction of a Steiner triple system of order $n$ and the intuition for our point labeling that achieves min-sum $n$.}
\label{fig:BoseIllustration}
\end{figure}

The Bose Construction is illustrated in Fig.~\ref{fig:BoseIllustration}. In order to achieve a min-sum of $n$, 
the general rule is to label $(x,0)$, $(x,1)$, and $(x,2)$ with numbers in $[0,n/3-1]$, $[2n/3-1,n-1]$, and $[n/3,2n/3-1]$, respectively, or using any cyclic reordering of these sets.
According to this rule, it is immediate that the block-sum of $B_x$ is at least $0 + 2n/3 + n/3 = n$ for every $x \in [0,m-1]$. 
Using labels in $[0,n/3-1]$ for $(x,0)$ and labels in $[n/3,2n/3-1]$ for $(x,1)$, for instance, would not allow us to achieve the min-sum value of $n$ because the block $B_{x,y,0},$ where $(x,0)$ and $(y,0)$ are labeled by $0$ and $1$, respectively, has a block-sum of at most $2n/3 < n$.
Before formally presenting an optimal labeling, referred to as the Bose Mapping, we provide some intuition behind our approach. 
First, a natural label for $(x,0)$ is $x$. Second, let us consider $x = 0$, $y > 0,$ and assume that ${\sf{sum}}(B_{0,y,0}) \geq n$. Since the labels of $(0,0)$ and $(y,0)$ are $0$ and $y$, respectively, the inequality implies that the label for $(0\ob y,1)$ is at least $n-y$. In fact, we will assign the label $n-y$ to $(0\ob y,1)$ in the Bose Mapping. The labeling rule for $(x,2)$ is more subtle and harder to intuitively justify.    

\textbf{The Bose Mapping.} We introduce a mapping $\pb$, which maps the point set $\X'$ of the $\stsn$ obtained in the Bose Construction to $\X = [0,n-1]$ as follows. For every $x \in [0,m-1]$,
\begin{equation}
\label{eq:Bose}
\begin{split}
(x,0) &\mapsto x,\\
(x,1) &\mapsto \begin{cases} 2m, &\text{ if } x = 0,\\ 
	n - y, &\text{ if } x = 0\ob y \neq 0,\end{cases}\\
(x,2) &\mapsto \begin{cases} m, &\text{ if } x = 0,\\ 
	m + \big(0\ob (m-x)\big), &\text{ if } x \neq 0.\end{cases}
\end{split}
\end{equation}
Applying the Bose Mapping to every point of the system $(\X',\B')$ produced by the Bose Construction, we obtain another $\stsn$ $(\X = [0,n-1], \B)$.

\begin{example}
\label{ex:Bose}
Let $n = 9$, $m = 3$.
The blocks of an $\sts(9)$ obtained from the Bose Construction are given
below. 
\begin{align*}
\text{Type 1: } &\{(0,0),(0,1),(0,2)\},\quad \{(1,0),(1,1),(1,2)\},\quad \{(2,0),(2,1),(2,2)\},\\
\text{Type 2: } &\{(0,0),(1,0),(2,1)\},\quad \{(0,1),(1,1),(2,2)\},\quad \{(0,2),(1,2),(2,0)\},\\
&\{(0,0),(2,0),(1,1)\},\quad \{(0,1),(2,1),(1,2)\},\quad \{(0,2),(2,2),(1,0)\},\\
&\{(1,0),(2,0),(0,1)\},\quad \{(1,1),(2,1),(0,2)\},\quad \{(1,2),(2,2),(0,0)\}.
\end{align*}
According to \eqref{eq:Bose}, the Bose Mapping $\pb$ results in the assignments 
\begin{align*}
(0,0) \mapsto 0,\quad (1,0) \mapsto 1,\quad (2,0) \mapsto 2,\\
(0,1) \mapsto 6,\quad (1,1) \mapsto 7,\quad (2,1) \mapsto 8,\\
(0,2) \mapsto 3,\quad (1,2) \mapsto 4,\quad (2,2) \mapsto 5.
\end{align*}
For example, $(0,1) \mapsto 2m=6$, while 
$$(1,2) \mapsto m + \big(0\ob (m-1)\big) =3+ (4  \pmod 3)=4.$$

Applying this mapping to the blocks above, we obtain the blocks of the output design. 
\begin{align*}
\text{Type 1: } &\{0,6,3\},\quad \{1,7,4\},\quad \{2,8,5\},\\
\text{Type 2: } &\{0,1,8\},\quad \{6,7,5\},\quad \{3,4,2\},\\
&\{0,2,7\},\quad \{6,8,4\},\quad \{3,5,1\},\\
&\{1,2,6\},\quad \{7,8,3\},\quad \{4,5,0\}.
\end{align*}
One can verify that this $\sts(9)$ has min-sum $9$, and that the $(n-1)/2=4$ blocks containing $0$ all have sum equal to $9$. One can also notice that the difference and the ratio of the maximum and minimum sums are $9$ and $2$, respectively, which achieve the lower bounds stated in Proposition~\ref{pro:dsrs_lowerbound}. In general, the Bose Construction
coupled with the Bose Mapping achieves the maximum min-sum (Theorem~\ref{thm:Bose}), but does not necessarily meet the bounds on the difference-sum and the ratio-sum (Proposition~\ref{pro:Bose_dsrs}).  
\end{example}

\begin{theorem} 
\label{thm:Bose}
For $n \geq 9$, $n \equiv 3 \pmod 6$, the Bose Construction coupled with the Bose Mapping produces an $\stsn$ with min-sum equal to $n$. 
\end{theorem}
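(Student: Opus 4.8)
The plan is to pin the min-sum to $n=3m$ from both sides: Corollary~\ref{cr:upperbound} (specialized to $k=3$, $t=2$) already gives $\mis \le n$, so it suffices to verify that after relabeling by $\pb$ every block of the design has block-sum at least $n$. Since the resulting $\stsn$ then has all block-sums $\ge n$ while some block must have sum $\le n$, the min-sum is forced to equal $n$.

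First I would record the two arithmetic facts that drive everything. Because $m$ is odd, $\tfrac{m+1}{2}$ is the multiplicative inverse of $2$ modulo $m$; hence $x \mapsto u(x) := 0 \ob x = \tfrac{m+1}{2}x \bmod m$ is a bijection of $[0,m-1]$, and $2(x \ob y) \equiv x+y \pmod m$. Using these I would confirm that $\pb$ is a genuine relabeling by showing it maps the three layers bijectively onto consecutive intervals: $(x,0)\mapsto[0,m-1]$, $(x,2)\mapsto[m,2m-1]$, and $(x,1)\mapsto[2m,n-1]$. The only subtle identity needed is $0\ob(m-x)=m-(0\ob x)$ for $x\ne 0$ (which follows from $\tfrac{m+1}{2}m\equiv 0 \pmod m$), giving the closed form $(x,2)\mapsto 2m-u(x)$ and confirming the layer-2 image is exactly $[m,2m-1]$.

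With the interval structure in hand, the easy blocks fall out immediately. A Type~1 block $B_x$ takes one label from each layer, so its sum is at least $0+m+2m=n$; a Type~2 block with $i=1$ has two layer-1 labels and one layer-2 label, so its sum is at least $2m+2m+m=5m\ge n$. The remaining Type~2 blocks ($i=0$ and $i=2$) have interval lower bounds of only $2m<n$ and demand exact computation. For $i=0$, the block is $\{(x,0),(y,0),(x\ob y,1)\}$; when $x\ob y\ne 0$, the relation $2(x\ob y)\equiv x+y$ makes the layer-1 label equal to $3m-\big((x+y)\bmod m\big)$, so the block-sum is $x+y+3m-\big((x+y)\bmod m\big)$, which is exactly $3m$ for $x+y<m$ and $4m$ for $x+y>m$; the boundary $x\ob y=0$ (i.e. $x+y=m$) uses the special label $2m$ and again yields sum $3m$.

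The main obstacle is the case $i=2$, block $\{(x,2),(y,2),(x\ob y,0)\}$, since this is where the ``subtle'' layer-2 rule does its work. Writing $a=u(x)$, $b=u(y)$, the closed form $(x,2)\mapsto 2m-u(x)$ together with $(x\ob y,0)\mapsto(a+b)\bmod m$ collapses the block-sum to $4m-a-b+\big((a+b)\bmod m\big)$, which is $4m$ when $a+b<m$ and $3m$ when $a+b\ge m$; the boundary $x=0$ (special label $m$) is checked separately and gives sum exactly $3m$. Verifying that the layer-2 contributions telescope against the layer-0 label modulo $m$ is the crux of the whole argument; once it is done, every block-sum is $\ge n$, and combining with $\mis\le n$ yields $\mis=n$, as claimed.
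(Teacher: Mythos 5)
Your proposal is correct and follows the paper's proof in its essential structure: show that $\pb$ maps the three layers bijectively onto $[0,m-1]$, $[m,2m-1]$, $[2m,3m-1]$ (Lemma~\ref{lem:Bbijection}), then verify block type by block type that every image has sum at least $n$, with Type~1 and Type~2 ($i=1$) blocks immediate from the interval bounds and the conclusion $\mis = n$ closed off by the upper bound of Corollary~\ref{cr:upperbound}. Where you genuinely diverge is in the hardest case, Type~2 with $i=2$: the paper (Lemma~\ref{lem:Btype2:3}) runs a parity case analysis on $m-x$, $m-y$, and $r$ via Lemma~\ref{lem:B0y}, whereas you use the identity $0\ob(m-x)=m-(0\ob x)$ for $x\neq 0$ together with the linearity of $u(x)=0\ob x=\tfrac{m+1}{2}x \bmod m$ to collapse the block-sum to $4m-a-b+\big((a+b)\bmod m\big)\in\{3m,4m\}$ --- a cleaner computation that also exhibits the exact block-sum values rather than just a lower bound. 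A further small point in your favor: in the $i=0$ case you explicitly treat the boundary $x\ob y=0$ (equivalently $x+y=m$), where the label of $(0,1)$ is the special value $2m$ rather than $n-r$; the paper's Lemma~\ref{lem:Btype2:1} writes $\pb((0\ob r,1))=n-r$ without noting that $r=0$ falls under the other branch of the definition of $\pb$ (the sum still equals $n$ there, so the paper's conclusion is unaffected, but your treatment is the careful one).
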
 

We present next a series of lemmas needed to prove Theorem~\ref{thm:Bose}. 
The first two statements are obvious from the definition of the operation $\ob$ and their proofs are hence omitted. 

\begin{lemma} 
\label{lem:Bxy}
If $0 \leq x,y,r \leq m-1$ and $x+y \equiv r \pmod m$, then $x \ob y = 0 \ob r$. 
\end{lemma}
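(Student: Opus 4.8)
The plan is to reduce the claimed identity to a single congruence modulo $m$, exploiting the fact that the operation $\ob$ is, by its definition in \eqref{eq:binop}, nothing more than multiplication by the fixed constant $\frac{m+1}{2}$ followed by reduction modulo $m$. The first thing I would record is that this constant is a genuine integer: since the Bose Construction assumes $m$ to be odd, $m+1$ is even and hence $\frac{m+1}{2}\in\bbZ$. This is the only place where the parity hypothesis on $m$ enters, and it is precisely what makes the manipulation legitimate.

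With that in hand the argument is immediate. Starting from the hypothesis $x+y\equiv r\pmod m$ and multiplying both sides by the integer $\frac{m+1}{2}$ — an operation that preserves congruences modulo $m$ — I obtain
\[
\frac{m+1}{2}(x+y)\;\equiv\;\frac{m+1}{2}\,r\;=\;\frac{m+1}{2}(0+r)\pmod m.
\]
By definition of $\ob$, the leftmost expression reduces modulo $m$ to $x\ob y$ and the rightmost reduces to $0\ob r$. Since both $x\ob y$ and $0\ob r$ are, by construction, the unique representatives of their residue classes lying in $[0,m-1]$, equality of the two residue classes forces equality of the two values, which is exactly the assertion of the lemma.

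I do not expect any genuine obstacle here: the entire content is the observation that a fixed scalar multiple respects congruences, together with the bookkeeping that $\ob$ always returns the canonical residue in $[0,m-1]$. The one point deserving a moment's care is the integrality of $\frac{m+1}{2}$, which is guaranteed by the standing assumption that $m$ is odd in the Bose regime $n=3m\equiv 3\pmod 6$; without it the defining expression $\frac{m+1}{2}(x+y)\pmod m$ would not even be well defined, so it is worth flagging explicitly even though the paper treats the statement as obvious.
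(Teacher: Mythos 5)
Your proof is correct and is exactly the argument the paper has in mind: the paper omits the proof of this lemma as ``obvious from the definition of the operation $\ob$,'' and what you have written out — that $\ob$ is multiplication by the integer $\frac{m+1}{2}$ (integrality guaranteed by $m$ odd) followed by canonical reduction modulo $m$, so congruent inputs give equal outputs — is precisely that omitted verification. No issues.
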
 

\begin{lemma} 
\label{lem:B0y}
For every $r \in [0,m-1]$, it holds that
\[
0\ob r = \begin{cases} r/2, &\text{ if } r \text{ is even,}\\ (r+m)/2, &\text{ if } r \text{ is odd.} \end{cases}
\]
\end{lemma}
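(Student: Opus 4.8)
The plan is to unfold the definition of $\ob$ and reduce the claim to an elementary modular computation, handling the two parities of $r$ separately. By~\eqref{eq:binop}, $0 \ob r = \tfrac{m+1}{2}\, r \pmod m$. The key preliminary observation is that, because $m$ is odd, the quantity $c := (m+1)/2$ is an integer satisfying $2c = m+1 \equiv 1 \pmod m$; that is, $c$ is the multiplicative inverse of $2$ modulo $m$. This is the crux of the argument, since it lets me move freely between a residue and twice that residue, and it makes the appearance of a factor $1/2$ in the target formula meaningful modulo $m$.

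First I would treat the even case. Writing $r = 2s$ (so $s \in [0,(m-1)/2]$, as $m-1$ is even), I compute $cr = c\cdot 2s = (m+1)s = ms + s \equiv s = r/2 \pmod m$. Since $s = r/2$ already lies in $[0,(m-1)/2] \subseteq [0,m-1]$, it is the reduced residue, and hence $0 \ob r = r/2$. Next I would treat the odd case, where $r \in [1,m-2]$ and the integer $(r+m)/2$ is well defined because $r+m$ is even. Multiplying the target by $2$ and using $2c \equiv 1$, I get $2\big(0 \ob r\big) \equiv r \equiv r + m = 2\cdot \tfrac{r+m}{2} \pmod m$. Cancelling the factor $2$, which is legitimate since $2$ is invertible modulo the odd number $m$, yields $0 \ob r \equiv (r+m)/2 \pmod m$.

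The only step that requires genuine care — everything else is mechanical — is confirming that the claimed expression is the actual residue in $[0,m-1]$ rather than merely congruent to it. For the odd case this means checking that $(r+m)/2 \in [(m+1)/2,\, m-1]$ as $r$ ranges over $[1,m-2]$, which lies inside $[0,m-1]$ and so pins down $0\ob r$ exactly; the even case was already verified above. With both ranges confirmed, the two cases together establish the stated formula. I do not anticipate any substantive obstacle here: the statement is a direct evaluation of the defining congruence, and the lemma is flagged in the excerpt as routine.
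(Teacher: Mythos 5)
Your proof is correct, and it is exactly the direct verification the paper has in mind: the paper states that Lemma~\ref{lem:B0y} is ``obvious from the definition of the operation $\ob$'' and omits the proof, and your argument --- observing that $(m+1)/2$ is the inverse of $2$ modulo the odd number $m$, computing each parity case, and confirming that $r/2 \in [0,(m-1)/2]$ and $(r+m)/2 \in [(m+1)/2,\,m-1]$ are the actual reduced residues --- is the intended routine computation, carried out with appropriate care about the range check.
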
 

The following lemma shows that the Bose Mapping is indeed one-to-one.

\begin{lemma} 
\label{lem:Bbijection}
The Bose Mapping $\pb$ is a bijection from $\X' = \left\{(x,i) \colon x \in [0,m-1], i \in [0,2]\right\}$ to $[0,n-1]$. 
Moreover, 
\begin{itemize}
	\item $\pb((x,0)) \in [0,m-1]$, 
	\item $\pb((x,2)) \in [m,2m-1]$, 
	\item $\pb((x,1)) \in [2m,3m-1]$. 
\end{itemize}
\end{lemma}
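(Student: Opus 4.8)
The plan is to exploit the fact that the three target intervals $[0,m-1]$, $[m,2m-1]$, $[2m,3m-1]$ partition $[0,n-1]=[0,3m-1]$ into blocks of equal size $m$. I will show that each of the three ``layers'' $\{(x,i)\colon x\in[0,m-1]\}$, for $i\in\{0,1,2\}$, is mapped by $\pb$ bijectively onto the claimed interval. Granting this, the three bulleted range statements are immediate, and since the three intervals are pairwise disjoint and together exhaust $[0,n-1]$, the three layer-bijections assemble into a single bijection $\pb\colon\X'\to[0,n-1]$.

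The crucial preliminary observation is that $r\mapsto 0\ob r$ is a permutation of $[0,m-1]$ fixing $0$. Indeed, by \eqref{eq:binop} we have $0\ob r=\frac{m+1}{2}\,r \pmod m$, and since $2\cdot\frac{m+1}{2}=m+1\equiv 1\pmod m$, the coefficient $\frac{m+1}{2}$ is a unit modulo $m$ (it is the inverse of $2$); hence multiplication by it permutes $\bbZ/m\bbZ=[0,m-1]$, and clearly $0\ob 0=0$. One may alternatively read this off the explicit formula in Lemma~\ref{lem:B0y}. In particular $r\mapsto 0\ob r$ restricts to a permutation of $[1,m-1]$.

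With this in hand I would treat the three layers in turn. For $i=0$, the assignment $(x,0)\mapsto x$ is visibly a bijection onto $[0,m-1]$. For $i=2$, the point $(0,2)$ maps to $m$, while for $x\in[1,m-1]$ the composition $x\mapsto m-x\mapsto 0\ob(m-x)$ is a bijection of $[1,m-1]$ onto itself (it composes the two permutations $x\mapsto m-x$ and $r\mapsto 0\ob r$ of $[1,m-1]$), so $(x,2)\mapsto m+\big(0\ob(m-x)\big)$ ranges bijectively over $[m+1,2m-1]$; together these give a bijection of the layer onto $[m,2m-1]$. For $i=1$, I first note that every $x\in[0,m-1]$ can be written uniquely as $x=0\ob y$ with $y\in[0,m-1]$, by the permutation property, and $y=0$ precisely when $x=0$. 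Thus $(0,1)\mapsto 2m$, while for $x\in[1,m-1]$ the corresponding $y$ runs bijectively over $[1,m-1]$, so $(x,1)\mapsto n-y=3m-y$ ranges bijectively over $[2m+1,3m-1]$; together these give a bijection of the layer onto $[2m,3m-1]$.

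The only real subtlety, and the step that demands care, is the bookkeeping around the exceptional value $x=0$ in the definitions of $\pb$ on the $i=1$ and $i=2$ layers: the formula $n-y$ evaluated at $y=0$ would yield $3m=n\notin[0,n-1]$, which is exactly why \eqref{eq:Bose} overrides it with the value $2m$, and one must check that this override lands precisely on the single element of $[2m,3m-1]$ not covered by $\{3m-y\colon y\in[1,m-1]\}$. Once the permutation property of $r\mapsto 0\ob r$ is established, the remainder is routine range arithmetic and no genuine difficulty remains.
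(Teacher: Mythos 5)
Your proposal is correct and follows essentially the same route as the paper's proof: decompose $\X'$ into the three layers $i=0,1,2$, check that each layer lands in its claimed length-$m$ interval, and use the invertibility of $\ob$ (equivalently, that $\tfrac{m+1}{2}$ is a unit mod $m$, which is exactly the paper's observation that $a\ob b\neq a\ob c$ for $b\neq c$) to get injectivity, hence bijectivity, on each layer. The only difference is cosmetic: you establish surjectivity onto each interval explicitly, whereas the paper gets it for free by counting ($m$ points injected into an $m$-element interval).
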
 
\begin{proof} 
The second statement follows from the definition of $\pi_B$;
noting that $0 \ob 0 = 0$, and hence if $x = 0 \ob y \neq 0$ then $y \neq 0$ as well, which in turn implies that $n-y \leq n-1= 3m-1$.

In order to prove the first statement, it suffices to show that $\pb((x,i)) \neq \pb((x',i))$ whenever $x \neq x'$, for every $i \in [0,2]$. This is straightforward to do since from the definition of the binary operation $\ob$ given in \eqref{eq:binop}, we always have $a \ob b \neq a \ob c$ if $b \neq c$, for $a,b,c \in [0,m-1]$. 
\end{proof} 

The next lemmas state that the sum of every block in the new system $\big(\X = \pb(\X'),\B = \pb(\B')\big)$ is at least $n$. 

\begin{lemma} 
\label{lem:Btype1}
Let $\pb$ be the Bose Mapping. Then for every $x \in [0,m-1]$ we have 
\[
\pb((x,0)) + \pb((x,1)) + \pb((x,2)) \geq n.
\]
\end{lemma}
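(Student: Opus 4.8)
The plan is to obtain this Type~1 block-sum bound as an immediate consequence of the interval membership already recorded in Lemma~\ref{lem:Bbijection}, avoiding any case analysis on the explicit formulas defining $\pb$. The guiding observation is that the three points $(x,0),(x,1),(x,2)$ forming the block $B_x$ are sent by $\pb$ into three disjoint ``layers'' of $[0,n-1]$, one label per layer.

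First I would restate, for an arbitrary $x \in [0,m-1]$, the three containments supplied by Lemma~\ref{lem:Bbijection}, namely $\pb((x,0)) \in [0,m-1]$, $\pb((x,2)) \in [m,2m-1]$, and $\pb((x,1)) \in [2m,3m-1]$. From each containment I retain only the lower endpoint, giving $\pb((x,0)) \geq 0$, $\pb((x,2)) \geq m$, and $\pb((x,1)) \geq 2m$. Adding these three inequalities yields
\[
\pb((x,0)) + \pb((x,1)) + \pb((x,2)) \;\geq\; 0 + 2m + m \;=\; 3m \;=\; n,
\]
which is precisely the assertion of the lemma.

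I do not anticipate any genuine obstacle: the substantive work lies in Lemma~\ref{lem:Bbijection}, and the present statement is a one-line corollary. The only point worth verifying is that the minima of the three layers sum to exactly the target, i.e. that $[0,m-1]$, $[m,2m-1]$, $[2m,3m-1]$ partition $[0,n-1]$ so that their left endpoints $0,m,2m$ total $3m=n$. This also makes transparent the footnote's heuristic that the Bose Mapping deliberately draws one label of each Type~1 block from each third of $[0,n-1]$. As a byproduct, since $\pb((x,0))=x$, for $x \neq 0$ the first term is at least $1$ and the block-sum is in fact strictly larger than $n$; equality holds only for the block $B_0$, consistent with Corollary~\ref{cr:upperbound}. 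A direct verification via the closed forms in \eqref{eq:Bose} together with Lemma~\ref{lem:B0y} is also possible, but it is longer and offers no additional insight.
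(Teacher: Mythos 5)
Your proof is correct and is essentially identical to the paper's: both derive the bound directly from the layer containments in Lemma~\ref{lem:Bbijection}, summing the lower endpoints $0 + 2m + m = 3m = n$. The additional observation that equality holds only for $B_0$ is a valid (if unneeded) bonus.
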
 
\begin{proof} 
From Lemma~\ref{lem:Bbijection},
\[
\pb((x,0)) + \pb((x,1)) + \pb((x,2)) \geq 0 + 2n/3 + n/3 = n.
\]
This proves the claimed result. 
\end{proof} 

\begin{lemma} 
\label{lem:Btype2:1}
Let $\pb$ be the Bose Mapping. Then for every $0 \leq x < y \leq m-1$ we have 
\[
\pb((x,0)) + \pb((y,0)) + \pb((x\ob y,1)) \geq n.
\]
\end{lemma}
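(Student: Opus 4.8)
The plan is a direct computation from the definition of the Bose Mapping, organized as a short case split on the value of $x \ob y$. First I would record the two easy terms: by definition $\pb((x,0)) = x$ and $\pb((y,0)) = y$, so the block-sum equals $x + y + \pb((x\ob y,1))$, and the whole statement reduces to evaluating the third summand. To do this I would pass $x \ob y$ to its canonical form: by Lemma~\ref{lem:Bxy}, if $r \in [0,m-1]$ denotes the residue of $x+y$ modulo $m$, then $x \ob y = 0 \ob r$. Since $0 \le x < y \le m-1$ forces $x+y \in [1,2m-3]$, we have $r = x+y$ when $x+y \le m-1$ and $r = x+y-m$ when $x+y \ge m$, so in all cases $x+y-r \in \{0,m\}$; this nonnegativity fact is what closes the argument at the end.

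Next I would split according to whether $x \ob y = 0$. Because $\frac{m+1}{2}$ is invertible modulo the odd number $m$ (it is the inverse of $2$), the operation $0 \ob r \equiv \frac{m+1}{2}r \pmod m$ vanishes if and only if $r = 0$, i.e.\ if and only if $x+y = m$. In that case the definition of $\pb$ on second coordinate $1$ gives $\pb((x\ob y,1)) = 2m$, and the block-sum is exactly $x+y+2m = m+2m = n$. In the complementary case $r \neq 0$, the point $x \ob y = 0 \ob r$ is nonzero, so the definition yields $\pb((x\ob y,1)) = \pb((0\ob r,1)) = n - r$, whence the block-sum equals $x + y + (n-r) = n + (x+y-r)$. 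Using $x+y-r \in \{0,m\}$, the block-sum is at least $n$, with equality precisely when $x+y \le m$.

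I do not expect a genuine obstacle here; this lemma is essentially a bookkeeping verification. The only point that requires care is matching the point $x\ob y$ to the correct branch of the piecewise definition of $\pb$ on second-coordinate-$1$ points, which is exactly the role Lemma~\ref{lem:Bxy} plays in reducing $x\ob y$ to $0\ob r$. It is worth noting that the computation also pins down when the bound is attained, namely for all pairs with $x+y \le m$, which is the kind of equality information that will be needed when later verifying that the resulting $\stsn$ has min-sum exactly $n$ rather than strictly larger.
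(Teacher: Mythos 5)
Your proof is correct and follows essentially the same route as the paper's: reduce $x\ob y$ to the canonical form $0\ob r$ via Lemma~\ref{lem:Bxy} and read off the label of $(0\ob r,1)$ from the definition of $\pb$. In fact your case split on whether $r=0$ is slightly more careful than the paper's split on whether $x=0$: when $x>0$ and $x+y=m$ one has $x\ob y=0$, so $\pb((x\ob y,1))=2m$ rather than $n-r$, a subcase the paper's displayed chain of equalities silently passes over (the block-sum there is still exactly $n$, as you verify explicitly).
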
 
\begin{proof} 
If $x = 0$ then
\[
\pb((0,0)) + \pb((y,0)) + \pb((0\ob y,1)) = 0 + y + (n-y) = n. 
\]
Suppose next that $x > 0$. Set $r \equiv x+y \pmod m$, where $r \in [0,m-1]$. Then by Lemma~\ref{lem:Bxy} we have
\[
\begin{split}
\pb((x,0)) + \pb((y,0)) + \pb((x\ob y,1)) &= x + y + \pb((0\ob r,1))\\
&= x + y + (n-r) \geq x + y + (n-x-y) = n.
\end{split} 
\]
This completes the proof. 
\end{proof} 

\begin{lemma} 
\label{lem:Btype2:2}
Let $\pb$ be the Bose Mapping. Then for every $0 \leq x < y \leq m-1$ we have 
\[
\pb((x,1)) + \pb((y,1)) + \pb((x\ob y,2)) > n.
\]
\end{lemma}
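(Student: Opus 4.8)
The plan is to recognize this as the easiest of the Type-2 block-sum inequalities and to prove it by the same one-line range argument used for Lemma~\ref{lem:Btype1}, rather than by any exact evaluation of the labels. The reason is structural: the block $B_{x,y,1}$ contains \emph{two} points, $(x,1)$ and $(y,1)$, that lie in slice $1$, and by Lemma~\ref{lem:Bbijection} slice-$1$ labels occupy the top third $[2m,3m-1]$ of the range.

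Concretely, I would invoke Lemma~\ref{lem:Bbijection} three times: it gives $\pb((x,1)) \geq 2m$ and $\pb((y,1)) \geq 2m$, since both points map into $[2m,3m-1]$, and it gives $\pb((x\ob y,2)) \geq m$, since $(x\ob y,2)$ maps into $[m,2m-1]$. Summing these crude bounds yields
\[
\pb((x,1)) + \pb((y,1)) + \pb((x\ob y,2)) \geq 2m + 2m + m = 5m = \frac{5n}{3},
\]
and since $n \geq 9 > 0$ we have $5n/3 > n$, which is exactly the claimed strict inequality.

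The point worth stressing is that no real obstacle arises here. Unlike Lemma~\ref{lem:Btype2:1}, whose two slice-$0$ points force the crude bound $0 + 0 + 2m = 2m < n$ and hence genuinely require the arithmetic of $\ob$ together with Lemmas~\ref{lem:Bxy} and~\ref{lem:B0y}, the present bound carries a slack of a full $2n/3$. Consequently no case distinction on whether $x$, $y$, or $x\ob y$ equals $0$ is needed, and the membership ranges of Lemma~\ref{lem:Bbijection} alone settle the claim; the only thing to confirm is that the first two coordinates both carry the index $i=1$ and therefore both land in the slice-$1$ range, which is immediate from the definition of $B_{x,y,1}$.
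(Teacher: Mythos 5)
Your proof is correct and is essentially the paper's own argument: the paper likewise cites Lemma~\ref{lem:Bbijection} to get $\pb((x,1)) \geq 2m$ and $\pb((y,1)) \geq 2m$ and concludes immediately (your extra $+m$ term for the slice-$2$ point only adds slack). Nothing further is needed.
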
 
\begin{proof} 
From Lemma~\ref{lem:Bbijection}, we have $\pb((x,1)) \geq 2m$ and
$\pb((y,1)) \geq 2m$. Therefore, the stated inequality holds.
\end{proof} 

\begin{lemma} 
\label{lem:Btype2:3}
Let $\pb$ be the Bose Mapping. Then for every $0 \leq x < y \leq m-1$ we have 
\[
\pb((x,2)) + \pb((y,2)) + \pb((x\ob y,0)) > n.
\]
\end{lemma}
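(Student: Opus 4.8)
The plan is to evaluate all three summands under the Bose Mapping and reduce the claim to a single modular inequality. By \eqref{eq:Bose}, the third point contributes $\pb((x\ob y,0)) = x\ob y$, while the first two land in the middle block $[m,2m-1]$: writing $a = (m-x)\bmod m$ and $b = (m-y)\bmod m$, the definition of $\pb$ gives $\pb((x,2)) = m + (0\ob a)$ and $\pb((y,2)) = m + (0\ob b)$, the degenerate case $x=0$ being absorbed by $a=0$, $0\ob 0 = 0$. Hence the block-sum equals $2m + (0\ob a) + (0\ob b) + (x\ob y)$, so the lemma is equivalent to the assertion
\[
(0\ob a) + (0\ob b) + (x\ob y) > m.
\]

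Next I would exploit that $\ob$ is a halving operation. Since $\tfrac{m+1}{2}\cdot 2 \equiv 1 \pmod m$, setting $c = (x+y)\bmod m$ and invoking Lemma~\ref{lem:Bxy} gives $x\ob y = 0\ob c$, while $a+b+c \equiv -x-y+(x+y) \equiv 0 \pmod m$ by construction. Because $a,b,c \in [0,m-1]$ and $b\neq 0$ (as $y\ge 1$), the integer $a+b+c$ is either $m$ or $2m$. The closed form of Lemma~\ref{lem:B0y} can be written as $0\ob r = (r + m\,[\,r\text{ odd}\,])/2$, so summing the three halvings yields
\[
(0\ob a)+(0\ob b)+(0\ob c) = \tfrac{1}{2}\big((a+b+c) + m\cdot N_{\mathrm{odd}}\big),
\]
where $N_{\mathrm{odd}}$ counts the odd values among $a,b,c$. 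Since $m$ is odd, matching the parity of $a+b+c$ forces $N_{\mathrm{odd}}$ to be odd when $a+b+c=m$ and even when $a+b+c=2m$.

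With this identity the inequality collapses to a finite parity check. In the regime $a+b+c=2m$ one has $N_{\mathrm{odd}}\in\{0,2\}$, giving the sum $m$ or $2m$; in the regime $a+b+c=m$ one has $N_{\mathrm{odd}}\in\{1,3\}$, again giving $m$ or $2m$. Thus the bound $\ge m$ drops out immediately in every case, and the sum strictly exceeds $m$ exactly when $N_{\mathrm{odd}}\ge 1$ in the first regime or $N_{\mathrm{odd}}\ge 2$ in the second.

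I expect the main obstacle to be precisely this final step, promoting $\ge m$ to the strict bound $> m$. The parity computation leaves open two tight configurations, namely $a+b+c=2m$ with $a,b,c$ all even, and $a+b+c=m$ with exactly one of them odd; on these the sum hits $m$ exactly. The heart of the argument is therefore to translate each tight pattern back through $a=(-x)\bmod m$, $b=(-y)\bmod m$, $c=(x+y)\bmod m$ into an explicit divisibility-and-ordering condition on $(x,y)$ and to determine its status using the strict inequality $x<y$ and the oddness of $m$. Resolving whether these residue patterns are genuinely forbidden for $0\le x<y\le m-1$ is the delicate combinatorial core on which the strict inequality rests, and I would devote the bulk of the effort to that case analysis.
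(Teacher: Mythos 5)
Your reduction is correct, and it is essentially a streamlined version of the paper's argument: the paper splits into $x=0$ and $x>0$ and applies Lemmas~\ref{lem:Bxy} and~\ref{lem:B0y} case by case, while you absorb $x=0$ into $a=0$ and run a single parity count on $a,b,c$ with $a+b+c\in\{m,2m\}$. Your identity in fact pins the block-sum down exactly: it equals $2m+\tfrac{1}{2}\big((a+b+c)+mN_{\mathrm{odd}}\big)$, which by your own case table is either $n$ or $4m$. This is sharper than what the paper proves.

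The one place you went astray is in expecting the final step --- promoting $\geq n$ to $>n$ --- to be a resolvable case analysis. It is not: the tight configurations you isolated genuinely occur, so the strict inequality in the lemma as stated is false and should read $\geq n$ (which is all Theorem~\ref{thm:Bose} needs, since the claim there is min-sum \emph{equal} to $n$). Concretely, for $x=0$ one always has $a=0$, $\{b,c\}=\{m-y,y\}$ with exactly one odd term (as $m$ is odd), hence $a+b+c=m$, $N_{\mathrm{odd}}=1$, and block-sum exactly $n$; the paper's own proof computes exactly this and ends with ``$=n$'' despite the ``$>n$'' in the statement. Equality also occurs with $x>0$: in Example~\ref{ex:Bose} ($m=3$) the block $\{(1,2),(2,2),(0,0)\}$ maps to $\{4,5,0\}$ with sum $9=n$; and for $m=9$, $(x,y)=(1,3)$ gives $a=8$, $b=6$, $c=4$, i.e.\ $N_{\mathrm{odd}}=0$ with $a+b+c=2m$, and block-sum $13+12+2=27=n$. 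That last example also refutes the claim in the paper's proof that $m-x$, $m-y$, $r$ cannot all be even (the paper's displayed lower bound, which adds $m/2$ for an odd term, does not apply in that case, though equality $=n$ still holds by direct evaluation). So rather than devoting effort to forbidding the tight patterns, simply state the conclusion your parity argument already delivers: the sum is always $n$ or $4m$, hence $\geq n$, with the equality cases explicitly characterized --- a complete and, indeed, corrected proof.
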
 
\begin{proof} 
If $x = 0$, by Lemma~\ref{lem:B0y} we have
\[
\begin{split}
\pb((0,2)) &+ \pb((y,2)) + \pb((0\ob y,0)) 
= m + \big(m + 0\ob(m-y)\big) + 0\ob y\\
&= 2m + 
\begin{cases} (m-y)/2, &\text{ if } m-y \text{ is even}\\ (m-y+m)/2, &\text{ if } m-y \text{ is odd}
\end{cases}
+
\begin{cases} y/2, &\text{ if } y \text{ is even}\\ (y+m)/2, &\text{ if } y \text{ is odd}
\end{cases}\\
&= \begin{cases} 2m + (m-y)/2 + (y+m)/2, &\text{ if } y \text{ is odd}\\ 2m + (m-y+m)/2 + y/2, &\text{ if } y \text{ is even}
\end{cases}\\
&= n,
\end{split}
\]
where the third equality follows because $m$ is odd.

Suppose that $x > 0$. Let $r \equiv x + y \pmod m$, where $r \in [0,m-1]$. By Lemma~\ref{lem:B0y} and Lemma~\ref{lem:Bxy} we have
\begin{multline*}
\pb((x,2)) + \pb((y,2)) + \pb((x\ob y,0)) 
= \big(m + 0\ob (m-x)\big) + \big( m + o\ob (m-y) \big) + x\ob y\\
= 2m + 
\begin{cases} (m-x)/2, &\text{ if } m-x \text{ is even}\\ (m-x+m)/2, &\text{ if } m-x \text{ is odd}
\end{cases}
+
\begin{cases} (m-y)/2, &\text{ if } m-y \text{ is even}\\ (m-y+m)/2, &\text{ if } m-y \text{ is odd}
\end{cases}\\
+
\begin{cases} r/2, &\text{ if } r \text{ is even}\\ (r+m)/2, &\text{ if } r \text{ is odd}
\end{cases}.
\end{multline*}
Since $m = n/3$ is odd, it is impossible that the three quantities $m-x$, $m-y$, and $r$ are all even. Therefore, 
\[
\begin{split}
\pb((x,2)) + \pb((y,2)) + \pb((x\ob y,0)) &\geq 2m + (m-x)/2 + (m-y)/2 + r/2 + m/2\\
&= n + (r+m-x-y)/2 \geq n. 
\end{split}
\]
This completes the proof of the lemma. 
\end{proof} 

The proof of Theorem~\ref{thm:Bose} now follows from Lemmas~\ref{lem:Bbijection},~\ref{lem:Btype1},~\ref{lem:Btype2:1},~\ref{lem:Btype2:2}, and ~\ref{lem:Btype2:3}. 

We next prove a proposition that establishes bounds on the max-sum, and hence difference-sum and ratio-sum of the Bose Construction coupled with the Bose Mapping. 

\begin{proposition}
\label{pro:Bose_dsrs}
For $n \geq 9$, $n \equiv 3 \pmod 6$, the Bose Construction coupled with the Bose Mapping produces an $\stsn$ with max-sum at most $8n/3-4$. 
Therefore, the difference-sum and the ratio-sum of this construction and mapping are $\O(5n/3)$ and $\O(8/3)$, respectively.
\end{proposition}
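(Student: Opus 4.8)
The plan is to bound the block-sum of each of the four block types produced by the Bose Construction separately, and to observe that the largest of these bounds equals $8n/3-4$. The only ingredient required is the range information already recorded in Lemma~\ref{lem:Bbijection}: writing $m=n/3$, every label $\pb((x,0))$ lies in $[0,m-1]$, every label $\pb((x,2))$ lies in $[m,2m-1]$, and every label $\pb((x,1))$ lies in $[2m,3m-1]$. Thus to upper bound the sum of a block it suffices to know how many of its three points are of each of the three kinds and to replace their labels by the largest admissible values in the corresponding ranges, being careful that labels of distinct points are distinct.

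First I would dispose of the three ``easy'' cases. A Type 1 block $B_x$ contains exactly one point of each kind, so its sum is at most $(m-1)+(2m-1)+(3m-1)=6m-3$. A Type 2 block $B_{x,y,0}$ contains two distinct type-$0$ points and one type-$1$ point, so its sum is at most $\big((m-1)+(m-2)\big)+(3m-1)=5m-4$; symmetrically, a Type 2 block $B_{x,y,2}$ contains two distinct type-$2$ points and one type-$0$ point, giving a sum of at most $\big((2m-1)+(2m-2)\big)+(m-1)=5m-4$. Each of these is at most $8m-4$, which holds since $m=n/3\ge 3$.

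The binding case is a Type 2 block $B_{x,y,1}=\{(x,1),(y,1),(x\ob y,2)\}$, which contains two type-$1$ points and one type-$2$ point. Here I would use that $\pb$ is injective (Lemma~\ref{lem:Bbijection}), so that the two type-$1$ labels are \emph{distinct} integers in $[2m,3m-1]$ and hence sum to at most $(3m-1)+(3m-2)=6m-3$; adding the type-$2$ label, which is at most $2m-1$, yields a block-sum of at most $8m-4=8n/3-4$. Taking the maximum over the four bounds gives $\mas\le 8n/3-4$.

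Finally, for the ``therefore'' conclusions I would invoke Theorem~\ref{thm:Bose} together with Corollary~\ref{cr:upperbound} to conclude that this construction has min-sum exactly $\mis=n$. Combined with $\mas\le 8n/3-4$, this gives $\ds=\mas-\mis\le(8n/3-4)-n=5n/3-4=\O(5n/3)$ and $\rs=\mas/\mis\le(8n/3-4)/n<8/3=\O(8/3)$. The only place requiring genuine care is the Type 2, $i=1$ case: one must not bound both type-$1$ labels crudely by $3m-1$ (which would yield only the weaker $8m-3$), but instead exploit the distinctness of the two labels to save one unit and obtain the stated $8m-4$. Since the other three block types are comfortably below this value, this case alone determines the bound.
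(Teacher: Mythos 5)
Your proposal is correct and follows essentially the same route as the paper: both identify, via the range information in Lemma~\ref{lem:Bbijection}, that the extremal block is a Type~2 block $\{(x,1),(y,1),(x\ob y,2)\}$ and bound its sum by $(3m-1)+(3m-2)+(2m-1)=8n/3-4$ using distinctness of the two type-$1$ labels, then combine with the min-sum value $n$ from Theorem~\ref{thm:Bose}. You merely make explicit the case check over the other block types that the paper dismisses as ``straightforward to see.''
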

\begin{proof} 
From Lemma~\ref{lem:Bbijection}, it is straightforward to see that a block with the maximum sum after applying the Bose Mapping $\pi_B$ must be of the form 
$B = \{(x,1),(y,1),(x\ob y, 2)\}$. Clearly,
\[
\sb \leq (3m-1) + (3m-2) + (2m-1) = 8n/3 - 4. 
\]
As the min-sum is known to be $n$ by Theorem~\ref{thm:Bose}, the claims regarding the difference-sum and the ratio-sum follow easily. 
\end{proof} 

Heuristic evidence suggests that for sufficiently large $n$, the max-sum of the $\stsn$ generated by the Bose Construction coupled with the Bose Mapping equals $8n/3-9$.

We now turn our attention to (a special case of) the Skolem Construction (see, for instance~\cite[p.~9]{LindnerRodger1997} or~\cite[p.~128]{Stinson2004}) when $n \equiv 1 \pmod 6$.\\ 

\textbf{The Skolem Construction.}
Let $n = 3m+1 \geq 7$, where $m$ is an even number, so that $n \equiv 1 \pmod 6$. 
We define a binary operation $\os$ on the set $[0,m-1]$ as follows. 
For $0 \leq x, y \leq m-1$,
\begin{equation} 
\label{eq:Sbinop}
x \os y \define 
\begin{cases}
\dfrac{x+y \pmod m}{2}, &\text{ if } x+y \pmod m \text{ is even},\\
\dfrac{\big(x+y \pmod m\big)+m-1}{2}, &\text{ if } x+y \pmod m \text{ is odd}.
\end{cases}
\end{equation} 
The point set is $\X' = \{\infty\} \cup \left\{(x,i) \colon x \in [0,m-1], i \in [0,2]\right\}$. 
The block set $\B'$ of an ${\sf{STS}}(3m+1)$ is the union of the following three types of blocks.
\begin{itemize}
	\item Type 1: $B_x = \left\{ (x,0), (x,1), (x,2) \right\}$, for every $x \in [0,m/2-1]$.
	\item Type 2: $B_{x,y,i} = \left\{ (x,i), (y,i), (x\os y, i + 1 \pmod 3)\right\}$, for every $0 \leq x < y \leq m - 1$ and $i \in [0,2]$.
	\item Type 3: $B_{x,i} = \left\{ \infty, (x+m/2,i), (x, i + 1 \pmod 3)\right\}$, for every $x \in [0,m/2 - 1]$ and $i \in [0,2]$.
\end{itemize}

Compared to the Bose Construction, the Skolem Construction uses an additional type of blocks that contain the special point $\infty$. Similar reasoning to that used for the Bose Construction, along with some additional considerations, may be used to find an optimal labeling for the Skolem $\stsn$. Here, we assign to the points $(x,0)$, $(x,1)$, and $(x,2)$ labels from $\left[0,\frac{n-1}{3}-1\right]$, $\left[\frac{2(n-1)}{3}+1,n-1\right]$, and $\left[\frac{n-1}{3}+1,\frac{2(n-1)}{3}\right]$, respectively, but reserve the label $\frac{n-1}{3}$ for $\infty$. If one assigns the label $\frac{n-1}{3}-1-x$ to $(x,0)$ then the condition
${\sf{sum}}(B_{x,m-1,0}) \geq n$ implies the corresponding label for $(x,1)$. The full description of this labeling, referred to as the \emph{Skolem Mapping}, is given below. 

\textbf{The Skolem Mapping.} We introduce a mapping $\ps$, which maps the point set $\X'$ of the $\stsn$ obtained in the Skolem Construction to $\X = [0,n-1]$ as follows. For every $x \in [0,m-1]$,
\begin{equation}
\label{eq:Skolem}
\begin{split}
(x,0) &\mapsto m-1-x,\\
(x,1) &\mapsto 
\begin{cases} 2m+1, &\text{ if } x = m/2-1,\\ 
2m+2+y, &\text{ if } x = (m-1) \os y \neq m/2-1,
\end{cases}\\
(x,2) &\mapsto m+1+0\os x,\\ 
\infty &\mapsto m. 
\end{split}
\end{equation}
Applying the Skolem Mapping to every point of the system $(\X',\B')$ produced by the Skolem Construction results in a new $\stsn$ system $(\X = [0,n-1], \B)$.

\begin{example}
\label{ex:Skolem}
Let $n = 13$, $m = 4$.
The blocks of an $\sts(13)$ obtained from the Skolem Construction are given
below. 
\begin{align*}
\text{Type 1: } &\{(0,0),(0,1),(0,2)\},\quad \{(1,0),(1,1),(1,2)\},\\
\text{Type 2: } &\{(0,0),(1,0),(2,1)\},\quad \{(0,1),(1,1),(2,2)\},\quad \{(0,2),(1,2),(2,0)\},\\
&\{(0,0),(2,0),(1,1)\},\quad \{(0,1),(2,1),(1,2)\},\quad \{(0,2),(2,2),(1,0)\},\\
&\{(0,0),(3,0),(3,1)\},\quad \{(0,1),(3,1),(3,2)\},\quad \{(0,2),(3,2),(3,0)\},\\
&\{(1,0),(2,0),(3,1)\},\quad \{(1,1),(2,1),(3,2)\},\quad \{(1,2),(2,2),(3,0)\},\\
&\{(1,0),(3,0),(0,1)\},\quad \{(1,1),(3,1),(0,2)\},\quad \{(1,2),(3,2),(0,0)\},\\
&\{(2,0),(3,0),(2,1)\},\quad \{(2,1),(3,1),(2,2)\},\quad \{(2,2),(3,2),(2,0)\},\\
\text{Type 3: } &\{\infty,(2,0),(0,1)\},\quad \{\infty,(3,0),(1,1)\},\quad \{\infty,(2,1),(0,2)\},\\
&\{\infty,(3,1),(1,2)\},\quad \{\infty,(2,2),(0,0)\},\quad \{\infty,(3,2),(1,0)\}.
\end{align*}
According to \eqref{eq:Skolem}, the Skolem Mapping $\ps$ makes the following assignments: 
\[
\begin{aligned}
(0,0) &\mapsto 3,\quad &(1,0) \mapsto 2,\quad &(2,0) \mapsto 1,\quad &(3,0)\mapsto 0,\text{ }\\
(0,1) &\mapsto 11,\quad &(1,1) \mapsto 9,\quad &(2,1) \mapsto 12,\quad &(3,1)\mapsto 10,\\
(0,2) &\mapsto 5,\quad &(1,2) \mapsto 7,\quad &(2,2) \mapsto 6,\quad &(3,2)\mapsto 8,\text{ } \\
\infty &\mapsto 4. & & &
\end{aligned}
\]
Applying this mapping to the blocks above, we obtain the blocks of the output design. 
\begin{align*}
\text{Type 1: } &\{3,11,5\},\quad \{2,9,7\},\\
\text{Type 2: } &\{3,2,12\},\quad \{11,9,6\},\quad \{5,7,1\},\\
&\{3,1,9\},\quad \{11,12,7\},\quad \{5,6,2\},\\
&\{3,0,10\},\quad \{11,10,8\},\quad \{5,8,0\},\\
&\{2,1,10\},\quad \{9,12,8\},\quad \{7,6,0\},\\
&\{2,0,11\},\quad \{9,10,5\},\quad \{7,8,3\},\\
&\{1,0,12\},\quad \{12,10,6\},\quad \{6,8,1\},\\
\text{Type 3: } &\{4,1,11\},\quad \{4,0,9\},\quad \{4,12,5\},\\
&\{4,10,7\},\quad \{4,6,3\},\quad \{4,8,2\}.
\end{align*}
One can easily verify that this $\sts(13)$ has min-sum $13$, while the max-sum value equals $30$. Hence, the ratio-sum equals $2.31$, which exceeds the lower bound presented in Proposition~\ref{pro:dsrs_lowerbound}. 
Similarly, the difference-sum is $17 > 13$, which also exceeds the lower bound
on the difference-sum. In general, similar to what was established for the Bose Construction, the difference-sum of the Skolem Construction coupled with the Skolem Mapping is $\O(5n/3)$ and the ratio-sum is $\O(8/3)$ (Proposition~\ref{pro:Skolem_dsrs}). 
\end{example}

\begin{theorem} 
\label{thm:Skolem}
For $n \geq 7$, $n \equiv 1 \pmod 6$, the Skolem Construction coupled with the Skolem Mapping produces an $\stsn$ with min-sum equal to $n$. 
\end{theorem}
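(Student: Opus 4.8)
The plan is to mirror the proof of Theorem~\ref{thm:Bose} step by step, since the Skolem Mapping is engineered in exact analogy with the Bose Mapping, the only genuinely new feature being the special point $\infty$ and the Type~3 blocks. First I would record the two arithmetic facts about $\os$ that play the role of Lemmas~\ref{lem:Bxy} and~\ref{lem:B0y}: that $x \os y = 0 \os r$ whenever $x + y \equiv r \pmod m$, and an explicit parity-based formula for $0 \os r$ (namely $r/2$ when $r$ is even and $(r+m-1)/2$ when $r$ is odd). The crucial structural difference from the Bose case is that here $m$ is \emph{even}, so $m-1$ is odd; this interchanges the roles of the parity cases and must be tracked carefully throughout.

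Next I would prove the analogue of Lemma~\ref{lem:Bbijection}: that $\ps$ is a bijection $\X' \to [0,n-1]$ whose four label classes partition $[0,n-1]$ as $\ps((x,0)) \in [0,m-1]$, $\ps(\infty)=m$, $\ps((x,2)) \in [m+1,2m]$, and $\ps((x,1)) \in [2m+1,3m]$. Injectivity on each level follows from injectivity of $0 \os \cdot$ on $[0,m-1]$ (even indices map to $[0,m/2-1]$, odd indices to $[m/2,m-1]$), exactly as in the Bose argument; here one must also observe that the index $x=m/2-1$ is precisely the one that would receive the out-of-range label $3m+1$, which is why it is reserved the value $2m+1$ instead. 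A size count ($m,1,m,m$ summing to $3m+1=n$) confirms the classes are disjoint and exhaust $[0,n-1]$.

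With the ranges in hand the block-sum checks split into the cases of the Bose proof plus three new Type~3 cases. Blocks dominated by two level-$1$ labels — Type~2 with $i=1$ and Type~3 with $i=1$ — and the Type~1 blocks all give sums exceeding $n$ immediately from $\ps((x,1)) \geq 2m+1$, paralleling Lemmas~\ref{lem:Btype1} and~\ref{lem:Btype2:2}. The Type~2 blocks with $i=0$ reproduce the tight case: writing $r \equiv x+y \pmod m$ and substituting the explicit label assigned to the level-$1$ point $(0\os r,1)$, the sum works out to at least $n$, with equality in the boundary configurations — the very relation the mapping was built to enforce. Here I would use the bound $x+y \leq 2m-3$ to eliminate the spurious sub-cases $x+y\in\{2m-2,2m-1\}$ that would otherwise appear to violate the inequality.

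I expect the main obstacle to be twofold. The first hard piece is Type~2 with $i=2$, the direct analogue of Lemma~\ref{lem:Btype2:3}: its sum equals $3m+1 + \big(0\os x + 0\os y - 0\os r\big)$ with $r \equiv x+y \pmod m$, so everything reduces to the inequality $0\os x + 0\os y \geq 0\os r$, which I would establish by a four-way parity case analysis on $x$ and $y$ (with equality only when $x+y<m$). The second hard piece is the two remaining Type~3 cases, which have no counterpart in the Bose proof. For $i=0$ the sum is $m + (m/2-1-x) + \ps((x,1))$, and I would show it is at least $n$ using $\ps((x,1)) \geq 2m+1$ together with $x \leq m/2-1$, the bound being tight exactly at $x=m/2-1$ where the reserved label $2m+1$ appears. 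For $i=2$ the sum is $3m + \big(0\os(x+m/2)-x\big)$, and the needed inequality $0\os(x+m/2) > x$ for $x \in [0,m/2-1]$ falls out of the explicit formula for $0\os\cdot$ after splitting on the parity of $x+m/2$. Assembling these block-type lemmas gives min-sum at least $n$; combined with the upper bound $\mis \leq n$ from Corollary~\ref{cr:upperbound}, this yields min-sum exactly $n$, completing the proof just as Theorem~\ref{thm:Bose} was assembled.
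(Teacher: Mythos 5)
Your proposal is correct and follows essentially the same route as the paper: the same arithmetic lemmas for $\os$ (your $x\os y=0\os r$ normalization is equivalent to the paper's $x\os y=(m-1)\os(r+1)$), the same bijection-and-ranges lemma, and the same block-type case analysis, including the tight Type~2 $i=0$ and $i=2$ computations and the three Type~3 cases. The only deviations are cosmetic streamlinings (e.g., your uniform bound for Type~3 with $i=0$ in place of the paper's split on $x=m/2-1$), so no further commentary is needed.
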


We make use of several lemmas to prove Theorem~\ref{thm:Skolem}. The first three follow in a straightforward manner from the definition of $\os$, and their proofs are hence omitted. 

\begin{lemma} 
\label{lem:S0y}
For every $y \in [0,m-1]$, it holds that
\[
0\os y = \begin{cases} y/2, &\text{ if } y \text{ is even,}\\ (y+m-1)/2, &\text{ if } y \text{ is odd.} \end{cases}
\]
\end{lemma}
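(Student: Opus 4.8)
The plan is to obtain the claim by direct substitution of $x = 0$ into the defining equation \eqref{eq:Sbinop} for the operation $\os$. The one point worth making explicit is the behavior of the inner modular reduction: since $y \in [0,m-1]$, the sum $0 + y = y$ already lies in the range $[0,m-1]$, so the reduction $x + y \pmod m$ appearing in \eqref{eq:Sbinop} is simply $y$ itself. Consequently, the parity condition $x + y \pmod m$ \emph{is even}, which governs the case split in the definition, collapses to the condition that $y$ is even.

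With this observation in hand, reading off the two branches of \eqref{eq:Sbinop} gives the result immediately: if $y$ is even we land in the first branch and obtain $0 \os y = y/2$, whereas if $y$ is odd we land in the second branch and obtain $0 \os y = \bigl(y + m - 1\bigr)/2$. These are precisely the two cases in the statement, so the lemma follows.

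I expect no genuine obstacle here; the entire content of the lemma is a matter of specializing the definition of $\os$ to $x=0$, which is exactly why it can be grouped with the other two lemmas whose proofs are omitted as routine. The only thing to take care over is to note that $\os$ is defined on the range $[0,m-1]$ and that the inner reduction modulo $m$ is therefore vacuous for this range of $y$; once that is recorded, both formulas are forced and no further argument is needed.
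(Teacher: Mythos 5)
Your proof is correct and matches what the paper intends: the paper explicitly omits the proof of this lemma as following ``in a straightforward manner from the definition of $\os$,'' and your direct substitution of $x=0$ into \eqref{eq:Sbinop}, together with the observation that the reduction modulo $m$ is vacuous for $y \in [0,m-1]$, is exactly that routine verification.
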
 

\begin{lemma} 
\label{lem:Smy}
For every $y \in [0,m-1]$, it holds that
\[
(m-1)\os y = \begin{cases} m-1, &\text{ if } y = 0,\\
(y-1)/2, &\text{ if } y \text{ is odd,}\\ (y+m-2)/2, &\text{ if } y\neq 0 \text{ is even.} \end{cases}
\]
\end{lemma}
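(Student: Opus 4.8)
The plan is to prove the identity by direct computation from the definition of $\os$ in~\eqref{eq:Sbinop}, splitting into cases according to the value and parity of $y$. The only ingredient beyond the definition itself is the standing assumption that $m$ is even (which holds since $n = 3m+1 \equiv 1 \pmod 6$); this is what makes the parities behave predictably under reduction modulo $m$. So the first step is simply to reduce $(m-1)+y$ modulo $m$ and read off which branch of~\eqref{eq:Sbinop} applies.

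For $y = 0$ we have $(m-1)+0 = m-1$, and since $m$ is even this residue is odd; the odd branch of~\eqref{eq:Sbinop} then gives $(m-1)\os 0 = \big((m-1)+m-1\big)/2 = m-1$, matching the first case. For $y \geq 1$ we instead have $(m-1)+y \geq m$, so that $(m-1)+y \equiv y-1 \pmod m$ with $y-1 \in [0,m-2]$; this is the residue that enters the definition. It then remains only to determine the parity of $y-1$, and here the evenness of $m$ is decisive: the parity of $y-1$ is opposite to that of $y$. If $y$ is odd then $y-1$ is even, so the even branch yields $(m-1)\os y = (y-1)/2$; if $y \neq 0$ is even then $y-1$ is odd, so the odd branch yields $\big((y-1)+m-1\big)/2 = (y+m-2)/2$. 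These exhaust all cases and reproduce the three claimed expressions.

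There is no genuine obstacle in this argument; it is a routine verification, which is why the authors omit it. The single point requiring any care is the boundary case $y = 0$: there the convenient residue $y-1$ would be negative, so one cannot apply the $y \geq 1$ reduction and must instead observe directly that $m-1$ itself is the (odd) residue. Handling this case separately, as above, closes the proof.
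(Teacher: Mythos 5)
Your verification is correct: the reduction $(m-1)+y \equiv y-1 \pmod m$ for $y \ge 1$, the parity flip coming from $m$ being even, and the separate treatment of $y=0$ (where the residue is $m-1$, which is odd) together give exactly the three claimed cases. The paper omits this proof as a routine consequence of the definition of $\os$, and your argument is precisely that routine verification, so nothing further is needed.
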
 

\begin{lemma} 
\label{lem:Sxy}
If $0 \leq x,y\leq m-1$ and $0 \leq r < m-1$, where $x+y \equiv r \pmod m$, then $x \os y = (m-1) \os (r+1)$. 
\end{lemma}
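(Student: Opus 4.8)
The plan is to reduce both sides of the claimed identity to explicit closed forms in the single residue $r$ and then match them through a short parity case analysis, relying on the two preceding lemmas. The first step exploits the fact that the operation $\os$ defined in~\eqref{eq:Sbinop} depends on its arguments only through $x+y \pmod m$. Since $x+y \equiv r \pmod m$ with $r \in [0,m-1]$, one has $x \os y = 0 \os r$, and Lemma~\ref{lem:S0y} then gives
\[
x \os y \;=\; 0 \os r \;=\; \begin{cases} r/2, & r \text{ even},\\ (r+m-1)/2, & r \text{ odd}.\end{cases}
\]

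Next I would evaluate the right-hand side $(m-1)\os(r+1)$. The hypothesis $0 \le r < m-1$ places $r+1$ in $[1,m-1]$, so the degenerate branch $y=0$ of Lemma~\ref{lem:Smy} never occurs. Applying Lemma~\ref{lem:Smy} with $y = r+1$ and splitting on the parity of $r+1$ yields
\[
(m-1)\os(r+1) \;=\; \begin{cases} \big((r+1)-1\big)/2 = r/2, & r+1 \text{ odd, i.e. } r \text{ even},\\ \big((r+1)+m-2\big)/2 = (r+m-1)/2, & r+1 \text{ even, i.e. } r \text{ odd}.\end{cases}
\]
Comparing this with the previous display term by term completes the argument.

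The single subtlety, and the reason the parity bookkeeping must be written out rather than skipped, is the off-by-one shift in the arguments: the even/odd branch of $\os$ is governed by the parity of $r$ on the left but by the parity of $r+1$ on the right. Because those parities are opposite, the two branches of Lemma~\ref{lem:Smy} line up precisely with the two branches of Lemma~\ref{lem:S0y}, and the resulting formulas coincide identically (here $m$ even ensures each expression is an integer). I would also confirm that the hypothesis $r < m-1$ is essential rather than cosmetic: it is exactly what keeps $r+1$ within range and excludes the value $(m-1)\os 0 = m-1$, so that the clean two-case form of Lemma~\ref{lem:Smy} applies; note too that $r+1 \ge 1$ is automatic, so the excluded branch never intrudes. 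No deeper obstacle arises, which is consistent with the statement being grouped among the lemmas whose verification is routine.
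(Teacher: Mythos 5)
Your proof is correct: reducing both sides to the closed form $0\os r$ via the fact that $\os$ depends only on $x+y \pmod m$, and matching the parity branches of Lemmas~\ref{lem:S0y} and~\ref{lem:Smy} (noting $r+1\in[1,m-1]$ so the $y=0$ branch is excluded), is exactly the routine verification the paper has in mind when it omits the proof as following ``in a straightforward manner from the definition of $\os$.'' Nothing further is needed.
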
 

We now show that the Skolem Mapping is indeed one-to-one.

\begin{lemma} 
\label{lem:Sbijection}
The Skolem Mapping $\pb$ is a bijection from $\X' = \{\infty\} \cup \left\{(x,i) \colon x \in [0,m-1], i \in [0,2]\right\}$ to $[0,n-1]$. Moreover,  
\begin{itemize}
	\item $\ps((x,0)) \in [0,m-1]$, 
	\item $\ps(\infty) = m$,
	\item $\ps((x,2)) \in [m+1,2m]$,
	\item $\ps((x,1)) \in [2m+1,3m]$.
\end{itemize}
\end{lemma}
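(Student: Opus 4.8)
The plan is to prove the four displayed range inclusions first, and then observe that bijectivity comes for free. The four target sets $[0,m-1]$, $\{m\}$, $[m+1,2m]$, and $[2m+1,3m]$ are pairwise disjoint, they cover $[0,3m]=[0,n-1]$, and their sizes sum to $m+1+m+m=3m+1=n=|\X'|$. So once each of the four kinds of points in $\X'$ is shown to land surjectively onto its claimed block, each block is also hit injectively, and $\ps$ is forced to be a bijection.

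The two easy cases come first. The assignment $(x,0)\mapsto m-1-x$ is plainly a bijection from $\{(x,0):x\in[0,m-1]\}$ onto $[0,m-1]$, and $\infty\mapsto m$ supplies the single value $m$. For the points $(x,2)$, the crux is that $x\mapsto 0\os x$ is a bijection of $[0,m-1]$ onto itself. This I would read off from Lemma~\ref{lem:S0y}: the even arguments $x\in\{0,2,\ldots,m-2\}$ produce $\{0,1,\ldots,m/2-1\}$ and the odd arguments $x\in\{1,3,\ldots,m-1\}$ produce $\{m/2,\ldots,m-1\}$, so together they cover $[0,m-1]$ without repetition. Hence $(x,2)\mapsto m+1+0\os x$ is a bijection onto $[m+1,2m]$.

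The points $(x,1)$ form the delicate case, and I expect this to be the main obstacle. Their labels are defined through the inverse of the map $y\mapsto(m-1)\os y$, so the first task is to verify that this map is a bijection of $[0,m-1]$; this I would obtain from Lemma~\ref{lem:Smy} by the same interval bookkeeping as above ($y=0$ yields $m-1$, the odd $y$ fill $\{0,\ldots,m/2-1\}$, and the nonzero even $y$ fill $\{m/2,\ldots,m-2\}$). Granting this, for every $x$ there is a unique $y$ with $x=(m-1)\os y$, so the rule is well defined. Tracing the inverse shows that $x=m/2-1$ corresponds exactly to $y=m-1$---precisely the value that would push $2m+2+y$ up to $3m+1=n$, out of range, which is why it is handled separately and sent to $2m+1$. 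For every other $x$ the index $y$ lies in $[0,m-2]$, so $2m+2+y$ ranges over $\{2m+2,\ldots,3m\}$; adjoining the reserved value $2m+1$ gives precisely the interval $[2m+1,3m]$ with no collisions.

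This settles the last range inclusion, and assembling the four cases yields the bijection. The single genuinely nontrivial point throughout is the interplay in the $(x,1)$ case between the bijectivity of $y\mapsto(m-1)\os y$ and the special treatment of $y=m-1$; everything else reduces to the parity-based interval accounting already used for $0\os x$.
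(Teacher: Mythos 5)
Your proof is correct and follows essentially the same route as the paper's: establish the four disjoint ranges (with the key point in the $(x,1)$ case being that $m/2-1=(m-1)\os(m-1)$, so $x\neq m/2-1$ forces $y\neq m-1$ and hence $2m+2+y\le 3m$), then conclude bijectivity by a counting/injectivity argument. The only cosmetic difference is that you verify the bijectivity of $y\mapsto 0\os y$ and $y\mapsto(m-1)\os y$ by explicit parity bookkeeping from Lemmas~\ref{lem:S0y} and~\ref{lem:Smy}, where the paper simply invokes the general cancellation property $a\os b\neq a\os c$ for $b\neq c$.
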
 
\begin{proof} 
The second statement follows from the definition of $\ps$; 
noting that since $m/2-1 = (m-1)\os(m-1)$, if $x = (m-1) \os y \neq m/2-1$ then $y \neq m-1$, which in turn implies that $\ps((x,1)) = 2m+2+y \leq 3m = n - 1$.  

It remains to show that $\ps((x,i)) \neq \ps((x',i))$ whenever $x \neq x'$, for every $i \in [0,2]$. This is also straightforward because from the definition of the binary operation $\os$ given in \eqref{eq:Sbinop}, we always have $a \os b \neq a \os c$ if $b \neq c$, for all $a,b,c \in [0,m-1]$. 
\end{proof} 

The next lemmas state that the sum of every block in the system $\big(\X = \pb(\X'),\B = \pb(\B')\big)$ is at least $n$. 

\begin{lemma} 
\label{lem:Stype1}
Let $\ps$ be the Skolem Mapping. Then, for every $x \in [0,m/2-1]$ we have 
\[
\ps((x,0)) + \ps((x,1)) + \ps((x,2)) > n.
\]
\end{lemma}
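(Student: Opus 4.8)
The plan is to bound each of the three terms from below using the range information established in Lemma~\ref{lem:Sbijection}, exactly as was done in the proof of Lemma~\ref{lem:Btype1} for the Bose case. By Lemma~\ref{lem:Sbijection}, for any $x$ we have $\ps((x,0)) \ge 0$, $\ps((x,2)) \ge m+1$, and $\ps((x,1)) \ge 2m+1$. Summing these three lower bounds gives
\[
\ps((x,0)) + \ps((x,1)) + \ps((x,2)) \ge 0 + (2m+1) + (m+1) = 3m+2 = n+1 > n,
\]
since $n = 3m+1$. This immediately yields the strict inequality claimed in the statement, and it holds for every $x \in [0,m/2-1]$ without needing the restriction on the range of $x$ at all.

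First I would invoke Lemma~\ref{lem:Sbijection} to extract the three interval memberships, then write the single display above. The key point is that the Skolem Mapping reserves the label $m$ for $\infty$ and shifts the images of $(x,2)$ up to $[m+1,2m]$ (rather than $[m,2m-1]$ as in the Bose case), so the constant gained is one more than in Lemma~\ref{lem:Btype1}; this is precisely what produces the \emph{strict} inequality $> n$ here as opposed to the non-strict $\ge n$ in the Type~1 Bose lemma.

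I do not anticipate any real obstacle: unlike the Type~2 lemmas (particularly the analogue of Lemma~\ref{lem:Btype2:3}), this argument does not require tracking the parities of $x$, $y$, or $r$ through the definition of $\os$, nor does it use the explicit formulas of Lemma~\ref{lem:S0y} or Lemma~\ref{lem:Smy}. The only subtlety worth a sentence is to note that the bound is crude — it throws away the precise value of $\ps((x,0)) = m-1-x$ — but crudeness is harmless because the interval endpoints alone already clear the threshold $n$ with room to spare. The hardest cases in the overall proof of Theorem~\ref{thm:Skolem} will be the Type~2 and Type~3 blocks, not this one.
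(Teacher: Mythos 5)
Your proposal is correct and is essentially identical to the paper's proof: both invoke Lemma~\ref{lem:Sbijection} to lower-bound the three terms by $0$, $2m+1$, and $m+1$ respectively, and sum them to get $3m+2 = n+1 > n$. No further comment is needed.
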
 
\begin{proof} 
By Lemma~\ref{lem:Sbijection}, 
\[
\ps((x,0)) + \ps((x,1)) + \ps((x,2)) \geq 0 + (m+1) + (2m+1) = n + 1 > n,
\]
which proves the claimed result.
\end{proof} 

\begin{lemma} 
\label{lem:Stype2:1}
Let $\ps$ be the Skolem Mapping. Then for every $0 \leq x < y \leq m-1$ we have 
\[
\ps((x,0)) + \ps((y,0)) + \ps((x\os y,1)) \geq n.
\]
\end{lemma}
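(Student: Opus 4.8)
The plan is to expand the left-hand side with the Skolem Mapping \eqref{eq:Skolem} and reduce the whole inequality to a single residue. Since $\ps((x,0)) = m-1-x$ and $\ps((y,0)) = m-1-y$, the first two terms contribute $2m-2-(x+y)$, so everything hinges on evaluating $\ps((x\os y,1))$ and comparing $2m-2-(x+y)+\ps((x\os y,1))$ against $n = 3m+1$. I would set $r \equiv x+y \pmod m$ with $r \in [0,m-1]$; the constraint $0 \le x < y \le m-1$ forces $0 \le x+y \le 2m-3$, so $x+y \in \{r,\, r+m\}$, and I would retain both possibilities when substituting $r$ back at the end.

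The heart of the argument is deciding which branch of the $(\cdot,1)$-rule applies to the point $w = x\os y$, that is, whether $w$ equals the distinguished value $m/2-1 = (m-1)\os(m-1)$ or not. For $r \le m-2$ I would invoke Lemma~\ref{lem:Sxy} to rewrite $x\os y = (m-1)\os(r+1)$; because $\os$ is injective in its second argument (a fact already used in Lemma~\ref{lem:Sbijection}), the value $w = m/2-1$ occurs exactly when $r = m-2$, yielding $\ps((w,1)) = 2m+1$, whereas $r \le m-3$ gives $r+1 \le m-2$ and hence $\ps((w,1)) = 2m+2+(r+1)$. The residual residue $r = m-1$ lies outside the hypotheses of Lemma~\ref{lem:Sxy} and must be handled directly: it forces $x+y = m-1$, and since $m-1$ is odd, the definition \eqref{eq:Sbinop} gives $x\os y = m-1$, while Lemma~\ref{lem:Smy} shows $m-1 = (m-1)\os 0$, so $\ps((w,1)) = 2m+2$.

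Substituting each branch, together with the two cases $x+y = r$ and $x+y = r+m$, finishes the estimate: in every case the block-sum equals $n$, except in the subcase $r \le m-3$ with $x+y = r$, where it equals $4m+1 > n$. Thus the inequality holds, with equality attained in most cases. I expect the main obstacle to be the careful bookkeeping around the two edge cases, namely the special label value $m/2-1$ and the excluded residue $r = m-1$; once these are isolated, the remaining computations are purely arithmetic and closely mirror the corresponding argument for the Bose Mapping in Lemma~\ref{lem:Btype2:1}.
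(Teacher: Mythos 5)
Your proof is correct and follows essentially the same route as the paper's: both reduce to the residue $r$ of $x+y$ modulo $m$, isolate the special values $x\os y=m/2-1$ and $x\os y=m-1$ (equivalently $r=m-2$ and $r=m-1$) using Lemma~\ref{lem:Sxy} and Lemma~\ref{lem:Smy}, and then check that the block-sum is $n$ or $4m+1$ in each remaining case. The one point left implicit in your final tally is that for $r=m-2$ the possibility $x+y=r+m=2m-2$ must be ruled out (it is, by your observation that $x+y\le 2m-3$), since that combination would otherwise yield a sum of $2m+1<n$.
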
 
\begin{proof}
First, if $x\os y = m/2-1,$ then $\ps((x\os y,1)) = 2m + 1$. Moreover, as $x < y \leq m-1$, this implies that $x + y = m - 2$. Therefore,
\[ 
\begin{split}
\ps((x,0)) + \ps((y,0)) + \ps((x\os y,1)) &= (m-1-x)+(m-1-y)+(2m+1)\\
&= n + m - 2 - (x+y) = n. 
\end{split}
\]
Second, if $x\os y = m-1$, then $x + y = m-1$. Therefore, 
\[
\begin{split}
\ps((x,0)) + \ps((y,0)) + \ps((x\os y,1)) &= (m-1-x) + (m-1-y) + (2m+2)\\
&= n + m-1 - (x+y) = n.
\end{split} 
\]
Lastly, suppose that $x \os y \notin \{m/2-1,m-1\}$. Let $r \in [0,m-1]$ so that $x + y \equiv r \pmod m$. Then $r < m-1$. By Lemma~\ref{lem:Sxy},
\[
\begin{split}
\ps((x,0)) + \ps((y,0)) + \ps((x\os y,1)) &= (m-1-x) + (m-1-y) + \ps(((m-1)\os(r+1),1))\\
&= (m-1-x) + (m-1-y) + (2m+2+r+1)\\ 
&= n + m+r - (x+y) \geq n.
\end{split}
\]
This completes the proof. 
\end{proof} 

\begin{lemma} 
\label{lem:Stype2:2}
Let $\ps$ be the Skolem Mapping. Then, for every $0 \leq x < y \leq m-1$ we have 
\[
\ps((x,1)) + \ps((y,1)) + \ps((x\os y,2)) > n.
\]
\end{lemma}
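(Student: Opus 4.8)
The plan is to prove this inequality directly from the range bounds established in Lemma~\ref{lem:Sbijection}, mirroring the argument used for the analogous Type~2 block in the Bose construction (Lemma~\ref{lem:Btype2:2}). The essential point is that both $(x,1)$ and $(y,1)$ carry large labels: by Lemma~\ref{lem:Sbijection}, $\ps((x,1)),\ps((y,1)) \in [2m+1,3m]$, so each of these two terms is at least $2m+1$. The third term $\ps((x\os y,2))$ lies in $[m+1,2m]$ and is in particular positive. Note that $(x\os y,2)$ is a legitimate point of the system since $x\os y \in [0,m-1]$, so the range bounds of Lemma~\ref{lem:Sbijection} genuinely apply to it.

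Combining these lower bounds, I would write
\[
\ps((x,1)) + \ps((y,1)) + \ps((x\os y,2)) \geq (2m+1) + (2m+1) + (m+1) = 5m+3.
\]
Since $n = 3m+1$, we have $5m+3 - n = 2m+2 > 0$, so the sum strictly exceeds $n$, as claimed. In fact the two labels coming from the $i=1$ coordinate already suffice: $\ps((x,1)) + \ps((y,1)) \geq 4m+2 > 3m+1 = n$, and adding the nonnegative third term only strengthens the inequality.

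I do not anticipate any real obstacle here. Unlike the Type~2 blocks with an $i=0$ entry (Lemma~\ref{lem:Stype2:1}), or the more delicate Type~3 and $\infty$-containing blocks, this case requires no exploitation of the precise arithmetic of $\os$; it follows from the crude fact that the interval $[2m+1,3m]$ reserved for the $i=1$ labels sits high enough that any two of its elements sum past $n$ on their own. The only verification needed is that the relevant points lie in the stated ranges, which is exactly what Lemma~\ref{lem:Sbijection} provides.
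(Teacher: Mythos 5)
Your proof is correct and takes essentially the same route as the paper: both simply invoke the range bounds from Lemma~\ref{lem:Sbijection} to note that $\ps((x,1))$ and $\ps((y,1))$ are each at least $2m+1$, which already forces the sum past $n=3m+1$. No issues.
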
 
\begin{proof} 
From Lemma~\ref{lem:Sbijection}, we have $\ps((x,1)) \geq 2m+1$ and
$\ps((y,1)) \geq 2m+1$. Therefore, it is clear that the stated inequality holds.
\end{proof} 

\begin{lemma} 
\label{lem:Stype2:3}
Let $\ps$ be the Skolem Mapping. Then, for every $0 \leq x < y \leq m-1$ we have 
\[
\ps((x,2)) + \ps((y,2)) + \ps((x\os y,0)) > n.
\]
\end{lemma}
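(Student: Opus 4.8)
The plan is to mirror the method of the two preceding lemmas: substitute the explicit images of the three points under $\ps$ and collapse them into a single expression. Since $\ps((x,2)) = m+1+(0\os x)$, $\ps((y,2)) = m+1+(0\os y)$, and $\ps((x\os y,0)) = m-1-(x\os y)$, and since $n = 3m+1$, the block-sum becomes
\[
\ps((x,2)) + \ps((y,2)) + \ps((x\os y,0)) = n + \big((0\os x) + (0\os y) - (x\os y)\big).
\]
Hence the whole lemma reduces to proving the strict inequality $(0\os x) + (0\os y) > x\os y$ for all $0 \le x < y \le m-1$.

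To control the three $\os$-values I would expand each through the closed form of Lemma~\ref{lem:S0y}, write $r \equiv x+y \pmod m$ with $r \in [0,m-1]$, and use Lemma~\ref{lem:Sxy} to express $x\os y$ in terms of $r$; because $m$ is even, the parity of $r$ coincides with that of $x+y$. I would then split into cases according to the parities of $x$ and $y$ and according to whether $x+y < m$ or $x+y \ge m$. Two families of cases yield a comfortable strictly positive surplus: when $x$ and $y$ are both odd, the two $\tfrac{m-1}{2}$-corrections in $0\os x$ and $0\os y$ combine against a single (or absent) correction in $x\os y$ and leave a surplus of order $m$; and when $x+y \ge m$, the reduction $r = x+y-m$ lowers $x\os y$ by exactly $m/2$ relative to its non-wrapping value, again producing a surplus of $m/2$. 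In all such configurations the strict inequality $(0\os x)+(0\os y) > x\os y$ follows from a direct comparison of the halved quantities.

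The step I expect to be the main obstacle is the remaining regime, in which $x+y < m$ and $x,y$ are \emph{not} both odd (that is, both even, or of opposite parity). Here neither source of surplus is available: the wrap-around gain of $m/2$ is absent because $r = x+y$, and the odd-correction terms either cancel or do not appear, so the coarse comparison only delivers the weak estimate $(0\os x)+(0\os y) \ge x\os y$. Establishing the claimed strict inequality in precisely this boundary regime is therefore the crux of the argument, and is where the fine parity structure of the Skolem operation $\os$ together with the hypothesis $x<y$ must be exploited most carefully; this is the step I would examine first, since the strict form of the conclusion hinges entirely on it.
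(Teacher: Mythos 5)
Your reduction is exactly the paper's: substituting \eqref{eq:Skolem} collapses the block-sum to $n + \big((0\os x) + (0\os y) - (x\os y)\big)$, and the subsequent analysis via the closed form of Lemma~\ref{lem:S0y}, with $r \equiv x+y \pmod m$ and the observation that $x+y$ and $r$ share the same parity because $m$ is even, is the same computation the paper performs. You have also correctly located the danger zone --- but the step you defer as ``the crux'' cannot be carried out, because the strict inequality is simply \emph{false} in that regime. When $x+y < m$ (so $r = x+y$, no wrap-around gain) and $x,y$ are not both odd, the surplus vanishes identically: if both are even it equals $\tfrac{x}{2}+\tfrac{y}{2}-\tfrac{x+y}{2}=0$, and in the mixed-parity case it equals $\tfrac{x}{2}+\tfrac{y+m-1}{2}-\tfrac{x+y+m-1}{2}=0$, so the block-sum is exactly $n$. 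A concrete instance sits in the paper's own Example~\ref{ex:Skolem}: for $n=13$, $m=4$, the block $B_{0,1,2}=\{(0,2),(1,2),(2,0)\}$ maps under $\ps$ to $\{5,7,1\}$, whose sum is $13=n$.

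The resolution is that the ``$>$'' in the statement of Lemma~\ref{lem:Stype2:3} is an overstatement (a typo, in effect): the paper's own proof concludes only $\geq n$, arguing from $x+y \geq r$ together with the matching parities, and never claims strictness. The weaker bound is all that Theorem~\ref{thm:Skolem} requires, since the min-sum is asserted to be exactly $n$ and Lemma~\ref{lem:Stype2:1} already produces blocks of sum exactly $n$. So do not hunt for additional parity structure or try to exploit $x<y$ in the boundary regime --- no such argument exists. Instead, finish the computation as you have set it up, record the conclusion as $\ps((x,2)) + \ps((y,2)) + \ps((x\os y,0)) \geq n$, and, if you like, note that equality occurs precisely when $x+y<m$ and $x,y$ are not both odd. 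With that amendment your argument is complete and coincides with the paper's.
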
 
\begin{proof} 
Let $r \equiv x + y \pmod m$, where $r \in [0,m-1]$. We have
\begin{align*}
&\ps((x,2)) + \ps((y,2)) + \ps((x\os y,0)) 
= (m+1+0\os x)+ (m+1+0\os y) + (m-1-x\os y)\\
&= n + 
\begin{cases} x/2, &\hspace{-0.25cm}\text{if } x \text{ is even}\\ (x+m-1)/2, &\hspace{-0.25cm}\text{if } x \text{ is odd}
\end{cases}
+
\begin{cases} y/2, &\hspace{-0.25cm}\text{if } y \text{ is even}\\ (y+m-1)/2, &\hspace{-0.25cm}\text{if } y \text{ is odd} 
\end{cases}
-
\begin{cases} r/2, &\hspace{-0.25cm}\text{if } r \text{ is even}\\ (r+m-1)/2, &\hspace{-0.25cm}\text{if } r \text{ is odd}
\end{cases}\\
&\geq n, 
\end{align*}
where the last equality follows because $x+y \geq r$, and $x+y$ and $r$ have the same parity, as $m$ is even.
\end{proof} 

\begin{lemma} 
\label{lem:Stype3:1}
Let $\ps$ be the Skolem Mapping. Then, for every $0 \leq x \leq m/2-1$ we have 
\[
\ps(\infty) + \ps((x+m/2,0)) + \ps((x,1)) \geq n.
\]
\end{lemma}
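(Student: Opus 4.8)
The plan is to evaluate the three labels appearing in the sum directly from the definition of the Skolem Mapping~\eqref{eq:Skolem}, and then to split into two cases according to which branch of the rule for $(x,1)$ applies.

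First I would record the two easy labels. By definition $\ps(\infty) = m$, and since $x + m/2 \in [m/2, m-1]$ the rule for points of the form $(\cdot,0)$ gives
\[
\ps((x+m/2,0)) = m - 1 - (x+m/2) = \frac{m}{2} - 1 - x.
\]
Thus the quantity of interest equals $m + \big(\tfrac{m}{2}-1-x\big) + \ps((x,1))$, and everything reduces to controlling the single remaining label $\ps((x,1))$.

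Next I would treat the boundary value $x = m/2-1$ on its own, as this is precisely the value for which the first branch of the rule for $(x,1)$ is invoked. Here $\ps((m/2-1,1)) = 2m+1$, and the sum collapses to $m + 0 + (2m+1) = 3m+1 = n$, so the bound holds with equality. For the remaining range $0 \le x \le m/2-2$ I would use the second branch, which requires identifying the unique $y$ with $(m-1)\os y = x$. The key step is to invert the operation $\os$: by Lemma~\ref{lem:Smy}, the odd value $y = 2x+1$ satisfies $(m-1)\os(2x+1) = ((2x+1)-1)/2 = x$, and by the injectivity of $(m-1)\os(\cdot)$ established in Lemma~\ref{lem:Sbijection} this is the unique preimage. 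Hence $\ps((x,1)) = 2m+2+(2x+1) = 2m+3+2x$, and the sum becomes
\[
m + \Big(\frac{m}{2}-1-x\Big) + (2m+3+2x) = \frac{7m}{2} + 2 + x,
\]
which exceeds $n = 3m+1$ by $\tfrac{m}{2}+1+x > 0$.

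The only subtle step is the inversion $(m-1)\os y = x \Rightarrow y = 2x+1$; the rest is bookkeeping with the piecewise definition of $\ps$. The point I would verify carefully is that for $x \le m/2-2$ the preimage genuinely lands in the \emph{odd} branch of Lemma~\ref{lem:Smy}, so that no even-$y$ or $y=0$ branch interferes and the formula $y = 2x+1$ is valid; I expect this to be the main (and essentially the only) obstacle to a clean write-up.
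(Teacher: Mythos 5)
Your proof is correct and follows essentially the same route as the paper's: isolate the boundary case $x = m/2-1$, then invert $(m-1)\os y = x$ via Lemma~\ref{lem:Smy} for the remaining $x$. The only difference is that you pin down the preimage explicitly as $y = 2x+1$ (which is valid, since $2x+1 \le m-3$ is odd and $(m-1)\os(\cdot)$ is injective), whereas the paper keeps $y$ abstract and splits on its parity; your version slightly streamlines the bookkeeping.
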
 
\begin{proof} 
If $x = m/2-1,$ then 
\[
\ps(\infty) + \ps((x+m/2,0)) + \ps((x,1)) = m + \big(m-1-(m-1)\big) + 2m + 1 = n.
\]
Now suppose that $x \neq m/2-1$. Then there exists a unique $y \in [0,m-1]$ such that $x = (m-1)\os y$. Since $(m-1)\os 0 = m-1 > m/2-1 \geq x$, we deduce that $y \neq 0$. Then, by Lemma~\ref{lem:Smy},
\[
\begin{split}
\ps(\infty) + \ps((x+m/2,0)) + \ps((x,1)) 
&= m + \Big(m-1-\big((m-1)\os y+m/2\big)\Big) + (2m + 2 + y)\\
&= n + m/2 + y - (m-1)\os y\\
&= n + m/2 + y - \begin{cases} (y-1)/2, &\text{ if } y \text{ is odd}\\
(y+m-2)/2, &\text{ if } y \text{ is even}
\end{cases}\\
&> n.
\end{split}
\]
This completes the proof.
\end{proof} 

\begin{lemma} 
\label{lem:Stype3:12}
Let $\ps$ be the Skolem Mapping. Then, for every $0 \leq x \leq m/2-1$ and for every $i \in [0,1]$, we have 
\[
\ps(\infty) + \ps((x+m/2,i)) + \ps((x,i+1\pmod 3)) \geq n.
\]
\end{lemma}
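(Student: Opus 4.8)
The plan is to split the claim according to the two values of $i$, observing that the two subcases are of very different difficulty. For $i = 0$ the block in question is $\{\infty,(x+m/2,0),(x,1)\}$, and the inequality to be shown is exactly the statement already established in Lemma~\ref{lem:Stype3:1}; I would therefore simply cite that lemma rather than reprove it. This is the genuinely delicate subcase, because by Lemma~\ref{lem:Sbijection} the middle label $\ps((x+m/2,0))$ lies in $[0,m-1]$ and can be as small as $0$, so the block-sum can drop all the way to the threshold $n$. One consequently cannot afford to discard information about the labels, which is precisely why Lemma~\ref{lem:Stype3:1} had to track the quantity $(m-1)\os y$ explicitly and distinguish the parity of $y$.

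For $i = 1$ the block becomes $\{\infty,(x+m/2,1),(x,2)\}$, and here I expect no difficulty at all, since there is enough slack in the coordinate ranges to conclude from Lemma~\ref{lem:Sbijection} alone. That lemma gives $\ps(\infty)=m$, $\ps((x+m/2,1)) \ge 2m+1$, and $\ps((x,2)) \ge m+1$, so that
\[
\ps(\infty) + \ps((x+m/2,1)) + \ps((x,2)) \ge m + (2m+1) + (m+1) = 4m+2 > 3m+1 = n.
\]
Thus the $i=1$ subcase follows immediately from the block-wise lower bounds, with room to spare; no label-aware argument is needed.

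Combining the two subcases settles the lemma for all $i \in [0,1]$. The only point worth flagging is that essentially all the work has been outsourced to earlier results: the hard inequality is Lemma~\ref{lem:Stype3:1}, while the remaining case reduces to a one-line consequence of the range computation in Lemma~\ref{lem:Sbijection}. In particular there is no real obstacle here; the main subtlety is simply recognising that the $i=0$ subcase \emph{must} be handled by the sharper argument of Lemma~\ref{lem:Stype3:1} (because its coordinate ranges alone only guarantee a sum of $m+0+(2m+1)=n$, leaving no slack), whereas the $i=1$ subcase cannot be reduced to that lemma but happily does not need to be, since its coordinate ranges already sum to strictly more than $n$.
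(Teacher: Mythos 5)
Your proof is correct, but it takes a different route from the paper's and rests on a mistaken assessment of where the difficulty lies. The paper proves both subcases uniformly from the range bounds of Lemma~\ref{lem:Sbijection}: for either value of $i \in [0,1]$, one of the two non-$\infty$ points of the block is a level-$1$ point, so its label is at least $2m+1$; together with $\ps(\infty)=m$ and the trivial bound that the remaining point's label is nonnegative, the block-sum is at least $m+(2m+1)+0=3m+1=n$. In particular, your claim that the $i=0$ subcase \emph{must} be handled by the sharper argument of Lemma~\ref{lem:Stype3:1} is wrong: a guaranteed sum of exactly $n$ with ``no slack'' is still $\ge n$, which is all the lemma asserts. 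Citing Lemma~\ref{lem:Stype3:1} for $i=0$ is of course still valid, since that lemma is literally the $i=0$ instance of the present statement, so your argument does go through; it is just an unnecessary detour (indeed, Lemma~\ref{lem:Stype3:1} is subsumed by the present lemma, and the proof of Theorem~\ref{thm:Skolem} cites only the latter). Your $i=1$ computation matches the paper's in spirit, with the extra slack $4m+2>n$ correctly noted. What the paper's uniform argument buys is brevity and independence from the case analysis in Lemma~\ref{lem:Stype3:1}; what your split buys is nothing beyond what the coarse bound already gives.
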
 
\begin{proof} 
By Lemma~\ref{lem:Sbijection}, we have
\[
\ps(\infty) + \ps((x+m/2,i)) + \ps((x,i+1\pmod 3)) 
\geq m + \ps((\cdot,1))
\geq m + (2m+1) = n,
\]
which establishes the claim. 
\end{proof} 

\begin{lemma} 
\label{lem:Stype3:3}
Let $\ps$ be the Skolem Mapping. Then, for every $0 \leq x \leq m/2-1$, we have 
\[
\ps(\infty) + \ps((x+m/2,2)) + \ps((x,0)) > n.
\]
\end{lemma}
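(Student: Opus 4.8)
The plan is to reduce the three-term inequality to a single statement about the operation $\os$ and then extract a margin by a parity analysis. First I would read off the three labels from the Skolem Mapping \eqref{eq:Skolem}: $\ps(\infty)=m$, $\ps((x,0))=m-1-x$, and $\ps((x+m/2,2))=m+1+0\os(x+m/2)$, the last being well defined because $x+m/2\in[m/2,m-1]\subseteq[0,m-1]$ for $x\in[0,m/2-1]$. Adding these and using $n=3m+1$, the block-sum becomes
\[
\ps(\infty)+\ps((x+m/2,2))+\ps((x,0))=3m+0\os(x+m/2)-x=(n-1)+\big(0\os(x+m/2)-x\big).
\]
Hence the claimed inequality $>n$ is equivalent to the clean statement $0\os(x+m/2)-x\ge 2$, and I would try to establish this by evaluating $0\os(x+m/2)$ exactly.

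The next step is to apply Lemma~\ref{lem:S0y} to $y:=x+m/2$ (an integer, since $m$ is even), splitting on the parity of $y$. In the odd case $0\os y=(y+m-1)/2$, so $0\os(x+m/2)-x=(3m/2-1-x)/2$; over $x\le m/2-1$ this is smallest at $x=m/2-1$, where it equals $m/2$, giving the strong estimate $\text{block-sum}\ge n-1+m/2$, which is strictly larger than $n$ for every $m\ge 4$. In the even case $0\os y=y/2$, so $0\os(x+m/2)-x=(m/2-x)/2\ge 0$, and this branch is where the argument turns delicate.

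The main obstacle is precisely the even branch at its extreme admissible index. The largest $x\le m/2-1$ with $x+m/2$ even is $x=m/2-2$ (present whenever $m\ge 4$), and there $0\os(x+m/2)-x=\big(m/2-(m/2-2)\big)/2=1$, so the block-sum equals $(n-1)+1=n$ and the strict inequality fails; for $m=2$ the same collapse occurs in the odd branch at $x=0$, where $m/2=1$. Thus strictness cannot be recovered: for every admissible $n$ there is exactly one index $x$ at which the Type-3 block $B_{x,2}$ attains sum \emph{exactly} $n$ — concretely the block $\{4,6,3\}$ in the $\sts(13)$ of Example~\ref{ex:Skolem}.

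I would therefore conclude that the provable bound is the non-strict one, $\ps(\infty)+\ps((x+m/2,2))+\ps((x,0))\ge n$, obtained uniformly by observing that both parity branches yield $0\os(x+m/2)-x\ge 1$, with equality at a single index. This is all that Theorem~\ref{thm:Skolem} requires, since min-sum equal to $n$ only needs every block-sum to be at least $n$; the statement of the lemma should accordingly read $\ge n$ rather than $>n$.
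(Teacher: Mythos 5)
Your computation is correct and follows essentially the same route as the paper's proof: read the three labels off \eqref{eq:Skolem}, write the block-sum as $(n-1)+\big(0\os(x+m/2)-x\big)$, and evaluate $0\os(x+m/2)$ by parity using Lemma~\ref{lem:S0y}. Your further diagnosis is also correct, and it exposes a genuine misstatement in the paper: the lemma asserts the strict inequality $>n$, but the paper's own proof concludes only ``$\geq n$'' in its final line, and equality really is attained. At $x=m/2-2$ (even branch, present for $m\geq 4$) the sum is exactly $n$, and your witness appears verbatim in Example~\ref{ex:Skolem}: the Type-3 block $\{\infty,(2,2),(0,0)\}$ maps to $\{4,6,3\}$, with sum $13=n$; for $m=2$ the equality occurs at $x=0$ in the odd branch, as you note. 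So the statement should indeed read $\geq n$, which is harmless for Theorem~\ref{thm:Skolem} --- the theorem needs only that every block-sum be at least $n$, and some block must in any case achieve sum exactly $n$, since Corollary~\ref{cr:upperbound} caps the min-sum at $n$. In short: your argument matches the paper's step for step, is carried out more carefully (pinpointing the unique equality index in each parity class), and correctly repairs a strict inequality that the paper asserts but does not, and cannot, prove.
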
 
\begin{proof} 
One can show that
\[
\begin{split}
\ps(\infty) + \ps((x+m/2,2)) + &\ps((x,0)) 
\geq m + \big(m+1+0\os (x+m/2)\big) + (m-1-x)\\
&= n-(x+1) + 
\begin{cases} (x+m/2)/2, &\text{ if } (x+m/2) \text{ is even}\\
(x+m/2+m-1)/2, &\text{ if } (x+m/2) \text{ is odd}
\end{cases}\\
&\geq n,
\end{split}
\]
where the last inequality holds because $x \leq m/2-1$. 
This completes the proof. 
\end{proof} 

The proof of Theorem~\ref{thm:Skolem} follows from Lemmas~\ref{lem:Sbijection},~\ref{lem:Stype1},~\ref{lem:Stype2:1},~\ref{lem:Stype2:2},~\ref{lem:Stype2:3},~\ref{lem:Stype3:12}, and~\ref{lem:Stype3:3}. 

The following proposition establishes the max-sum, and hence difference-sum and ratio-sum of the Skolem Construction coupled with the Skolem Mapping. The results are similar to those shown for the Bose Construction.  

\begin{proposition}
\label{pro:Skolem_dsrs}
For $n \geq 7$, $n \equiv 1 \pmod 6$, the Skolem Construction coupled with the Skolem Mapping produces an $\stsn$ with max-sum at most $(8n-11)/3$. 
Therefore, the difference-sum and the ratio-sum of this construction are $\O(5n/3)$ and $\O(8/3)$, respectively.
\end{proposition}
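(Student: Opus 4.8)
The plan is to mirror the structure of Proposition~\ref{pro:Bose_dsrs} exactly. By Lemma~\ref{lem:Sbijection}, the Skolem Mapping sends the three coordinate layers to disjoint intervals with the layer $i=1$ occupying the largest values $[2m+1,3m]$, the layer $i=2$ occupying $[m+1,2m]$, the point $\infty$ mapping to $m$, and the layer $i=0$ occupying $[0,m-1]$. Since the largest labels live in layer $1$, the block achieving the maximum sum should be the one concentrating two layer-$1$ points together with one layer-$2$ point; among the block types of the Skolem Construction, the only family with two coordinates at level $i=1$ is the Type~2 family $B_{x,y,1} = \{(x,1),(y,1),(x\os y,2)\}$. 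Every other block type contributes at most one layer-$1$ label (Type~1 and Type~3 contain exactly one layer-$1$ point, and Type~2 with $i \in \{0,2\}$ contains none), so their sums are strictly dominated.

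First I would argue that a maximum-sum block must be of the form $B = \{(x,1),(y,1),(x\os y,2)\}$ with $0 \le x < y \le m-1$, by the interval-placement bookkeeping above: any block lacking two layer-$1$ points loses at least one label of size $\ge 2m+1$ in exchange for a label of size $\le 2m$, and this gap exceeds the largest possible compensation from the remaining entries. Second, I would bound the sum of such a block directly from the interval endpoints of Lemma~\ref{lem:Sbijection}. The two layer-$1$ labels are distinct elements of $[2m+1,3m]$, so their sum is at most $3m + (3m-1) = 6m-1$, and the layer-$2$ label is at most $2m$. Hence
\[
\sb \le (3m) + (3m-1) + (2m) = 8m - 1 = \frac{8(n-1)}{3} - 1 = \frac{8n-11}{3},
\]
using $m = (n-1)/3$. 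This establishes the stated upper bound on the max-sum.

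Finally, since Theorem~\ref{thm:Skolem} gives $\mis = n$, the difference-sum satisfies $\ds = \mas - \mis \le (8n-11)/3 - n = (5n-11)/3 = \O(5n/3)$, and the ratio-sum satisfies $\rs = \mas/\mis \le (8n-11)/(3n) = \O(8/3)$, as claimed.

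I do not expect any serious obstacle: the argument is a routine adaptation of the Bose case. The one point requiring slight care is verifying that no block carrying $\infty$ (a Type~3 block) can compete, since $\infty \mapsto m$ sits just below the layer-$2$ interval; but a Type~3 block contains $\infty$ and at most one layer-$1$ point, so its sum is at most $m + 3m + (2m) = 6m < 8m-1$ for $m \ge 1$, confirming that the Type~2 family is indeed the unique candidate for the maximum.
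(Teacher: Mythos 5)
Your proposal is correct and follows essentially the same route as the paper: identify via Lemma~\ref{lem:Sbijection} that a maximum-sum block must be of the form $\{(x,1),(y,1),(x\os y,2)\}$, bound its sum by $3m+(3m-1)+2m=(8n-11)/3$, and combine with Theorem~\ref{thm:Skolem} for the difference- and ratio-sum claims. Your extra bookkeeping over the block types (including the $\infty$ blocks) just makes explicit what the paper calls ``straightforward to see.''
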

\begin{proof} 
From Lemma~\ref{lem:Sbijection}, it is straightforward to see that a block with the maximum sum after applying the Skolem Mapping $\pi_S$ must be of the form 
$B = \{(x,1),(y,1),(x\ob y, 2)\}$. Clearly,
\[
\sb \leq 3m + (3m-1) + 2m = (8n-11)/3. 
\]
As the min-sum is known to be $n$ by Theorem~\ref{thm:Skolem}, the claims regarding the difference-sum and the ratio-sum hold as claimed. 
\end{proof} 

Heuristic evidence suggests that the max-sum of the $\stsn$ obtained from the Skolem Construction coupled with the Skolem Mapping equals $(8n-17)/3$ for sufficiently large $n$.

\section{Dual Min-Sum of Steiner Triple Systems} \label{sec:labels-dual}

In what follows, we focus our attention on duals of $\stsn$ and establish min-sum results analogous to those established for $\stsn$. It turns out that the min-sum of the duals is only $3/4\times$ of the average block-sum in the STS, 
whereas the min-sum of the STSs is roughly $2/3\times$ of the average block-sum in the STS. Thus, one can 
achieve better balancing properties with the duals than with the STSs themselves. 

The dual of a design is formally defined next. Note that the block size of the dual is precisely the repetition number of the original design~\cite[p.~370]{BoseNairPBIBD}.

\begin{definition}
The dual of a $t$-$(n,k,\lam)$ design $\D = (\X,\B)$ is $\D^*=(\B,\X)$, where $B \in \B$ is contained in $x \in X$ if $x \in B$ in $\D$.  
\end{definition}

Consider the dual of an $\stsn$ where the blocks in $\B$ are labeled from $0$ to $n(n-1)/6-1$.
Our goal is to study the \emph{dual min-sum}, that is, the min-sum of the dual, denoted by $\misd(\B)$. 
A trivial upper bound on the dual min-sum is the averaged dual min-sum. 

\begin{lemma} 
\label{lem:upper_bound_dual}
For any $\stsn$ $(\X,\B)$ with any block ordering, it holds that
\[
\misd(\B) \leq \dfrac{1}{24}(n-1)(n-3)(n+2).
\]
\end{lemma}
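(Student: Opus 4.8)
The plan is to bound the minimum dual block-sum by the average one, exploiting the fact that each block of the STS, being a triple, contributes its label to exactly three dual block-sums. This is the "averaged dual min-sum" alluded to in the statement, and the whole argument is a single double-counting identity followed by an algebraic simplification.

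First I would unwind the definition of the dual. In $\D^* = (\B,\X)$ the block indexed by a point $x \in \X$ is the collection of all original blocks $B$ with $x \in B$, so the dual block-sum at $x$ equals $\sum_{B \ni x} \lb$, where $\lb \in \{0,1,\ldots,n(n-1)/6-1\}$ denotes the label assigned to $B$. Summing these dual block-sums over all $n$ points and exchanging the order of summation gives
\[
\sum_{x \in \X}\ \sum_{B \ni x} \lb \;=\; \sum_{B \in \B} |B|\,\lb \;=\; 3 \sum_{B \in \B} \lb,
\]
since $|B| = 3$ for every block of an $\stsn$; equivalently, each block lies in exactly three dual blocks. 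Because the labels are precisely the integers $0,1,\ldots,N-1$ with $N = n(n-1)/6$, the remaining sum is $\sum_{B \in \B} \lb = \binom{N}{2} = N(N-1)/2$, a quantity that does not depend on the chosen block ordering.

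It then remains to divide by $n$ and simplify. Substituting $3N = n(n-1)/2$ and $N-1 = (n-3)(n+2)/6$ yields
\[
\frac{1}{n}\cdot 3\binom{N}{2} \;=\; \frac{3N(N-1)}{2n} \;=\; \frac{n-1}{4}\cdot\frac{(n-3)(n+2)}{6} \;=\; \frac{(n-1)(n-3)(n+2)}{24},
\]
which is exactly the average dual block-sum. Since the minimum of a finite collection of numbers is at most their average, we conclude $\misd(\B) \leq \tfrac{1}{24}(n-1)(n-3)(n+2)$, as claimed. I expect no genuine obstacle here: the only nonroutine step is the factorization $N-1 = (n-3)(n+2)/6$, and the conceptual content is entirely the triple-counting identity, which holds verbatim for any block ordering.
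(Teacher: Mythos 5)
Your proof is correct and follows the same route as the paper's: bound the minimum dual block-sum by the average, use the triple-counting identity $\sum_{x}\sum_{B \ni x}\lb(B) = 3\sum_{B}\lb(B)$, and simplify $\frac{3}{n}\binom{N}{2}$ with $N = n(n-1)/6$. The paper states this more tersely but the argument is identical.
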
 
\begin{proof} 
Since each block in $\B$, labeled by a number ranging from $0$ to $n(n-1)/6-1$, contains exactly three points from $\X$, 
on average, the sum of labels of blocks containing a particular point equals 
\[
\dfrac{1}{n} \times \sum_{l = 0}^{n(n-1)/6-1} 3l = \dfrac{1}{24}(n-3)(n-1)(n+2).
\]
As the dual min-sum cannot exceed the average, the lemma follows. 
\end{proof} 

We show next that the $\stsn$ generated by the Bose and by the Skolem Construction, \emph{together with a suitable block ordering}, achieve min-sums that are at most $3/4\times$ away from the upper bound of Lemma~\ref{lem:upper_bound_dual}. 

\begin{theorem}
\label{thm:Bose_YXI}
The Bose $\stsn$, in which the blocks are labeled from $0$ to $n(n-1)/6-1$ in the order: $B_x$, $x = m-1,m-2\ldots,0$, followed by $B_{x,y,i}$, $y = 1,2,\ldots,m-1$, $x = 0,1,\ldots,y-1$, $i = 0,1,2$, has dual min-sum equal to
\[
f_{BYXI}(n) = 
\begin{cases}
\dfrac{55}{1728}n^3 + \dfrac{1}{192}n^2 - \dfrac{9}{64}n - \dfrac{31}{64},& \text{ if } n/3 \equiv 1 \pmod 4,\ n \geq 27,\\\\
\dfrac{55}{1728}n^3 + \dfrac{1}{192}n^2 - \dfrac{13}{64}n + \dfrac{13}{64},& \text{ if } n/3 \equiv 3 \pmod 4,\ n \geq 33.
\end{cases}
\] 
We refer to the specified block labeling as the Bose YXI-labeling. 
\end{theorem}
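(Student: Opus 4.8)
The plan is to compute, for each point $(x,i)\in\X'$, the exact sum $S(x,i)$ of the YXI-labels of the $\tfrac{n-1}{2}$ blocks containing it, and then minimize over $(x,i)$. The first step is to record closed forms for the labels. Counting the blocks that precede a given one in the prescribed order, the Type~1 block $B_x$ receives label $m-1-x$, while the Type~2 block $B_{x,y,i}$ (with $x<y$) receives label $m+\tfrac{3y(y-1)}{2}+3x+i$, since the $m$ Type~1 blocks come first and the Type~2 blocks are ordered lexicographically in $(y,x,i)$. I would then partition the blocks through $(x,i)$ into four families according to the role $(x,i)$ plays: (i) the single Type~1 block $B_x$; (ii) the blocks $B_{x,b,i}$ with $b\in[x+1,m-1]$, where $(x,i)$ is the smaller coordinate; (iii) the blocks $B_{a,x,i}$ with $a\in[0,x-1]$, where it is the larger coordinate; and (iv) the blocks $B_{a,b,j}$ with $j\equiv i-1\pmod 3$ and $a\ob b=x$, where $(x,i)$ appears as the derived third point. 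Since $x\ob y=\tfrac{m+1}{2}(x+y)\bmod m$ and $m$ is odd, the condition $a\ob b=x$ is equivalent to $a+b\equiv 2x\pmod m$, and exactly $\tfrac{m-1}{2}$ unordered pairs satisfy it (checking that the four family sizes add to $\tfrac{n-1}{2}$ is a useful sanity test).

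Families (i)--(iii) contribute expressions that are polynomial in $x$ and linear in $i$, which I would sum directly using $\sum_{b=x+1}^{m-1}b(b-1)$ and $\sum_{a=0}^{x-1}3a$. A helpful early simplification is to collect all the $i$-dependent terms across the four families, which gives
\[
(m-1-x)i + xi + \tfrac{m-1}{2}\big((i-1)\bmod 3\big) = (m-1)\Big(i+\tfrac12\big((i-1)\bmod 3\big)\Big).
\]
This is minimized precisely at $i\in\{0,1\}$, contributing $m-1$, so I may fix $i\in\{0,1\}$ from here on and reduce to minimizing a function of $x$ alone.

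The technical heart is family (iv): beyond the constant $\tfrac{m-1}{2}m$ and the already-handled $j$ term, it contributes $D(x)=\sum\big(\tfrac{3b(b-1)}{2}+3a\big)$, summed over the pairs $\{a,b\}$ with $a<b$ and $a+b\equiv 2x\pmod m$. Here I would split the pairs according to whether their true sum is $2x$, $2x+m$, or $2x-m$, equivalently according to whether $x\le\tfrac{m-1}{2}$ or $x\ge\tfrac{m+1}{2}$. In the first regime there are $x$ pairs summing to $2x$, with the smaller element ranging over $[0,x-1]$, and $\tfrac{m-1-2x}{2}$ pairs summing to $2x+m$ that lie entirely in $[2x+1,m-1]$; the second regime is symmetric with sums $2x$ and $2x-m$. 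Because the summand is asymmetric in $a$ and $b$, I must keep track of which element of each pair is the larger one and then evaluate $\sum a$, $\sum b$, and $\sum b^2$ over each block of pairs in closed form. Assembling these with families (i)--(iii) yields $S(x)$ (at the optimal $i$) as an explicit piecewise cubic in $x$.

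Finally I would minimize this explicit expression over $x\in[0,m-1]$. I expect the minimizer to lie in the interior and to be pinned to an integer near a fixed fraction of $m$, so that both the optimal $x$ and the resulting value depend on the residue of $m=n/3$ modulo $4$; evaluating at the integer minimizer in each regime then produces the two formulas for $f_{BYXI}(n)$. The main obstacle is the bookkeeping in family (iv): correctly handling the wraparound cases and the ``which element is larger'' distinction is exactly what generates the parity-dependent correction terms, and it is this parity that splits the final answer into the $n/3\equiv 1$ and $n/3\equiv 3\pmod 4$ cases. A second, more routine difficulty is confirming that the candidate minimizer is a genuine global minimum rather than merely a critical point of the smooth interpolant, which I would settle by comparing the value at the integer minimizer against its integer neighbors and against the opposite regime.
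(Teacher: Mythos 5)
Your plan is essentially the paper's own proof: the same label formulas for $B_x$ and $B_{x,y,i}$, the same four-family decomposition of the blocks through $(z,i)$ (the paper's Types 1--4 in the proof of Lemma~\ref{lem:sB}), the same explicit description of the pairs with $a\ob b = z$ split by wraparound (Lemma~\ref{lem:Bose_inverse}), and the same final step of minimizing an explicit piecewise cubic over integer $z$, with the observation that $i\in\{0,1\}$ is optimal. One small correction to your expectations: the $n/3\bmod 4$ case split does not come from the wraparound bookkeeping in family (iv) --- the formula for the sum is a single polynomial on all of $z\le\frac{m-1}{2}$ --- but only from whether the continuous minimizer near $m/4$ rounds to $\frac{m-1}{4}$ or $\frac{m-3}{4}$; also, the paper certifies global minimality not by comparing integer neighbors but by factoring $\ell_B(z,i)-f_{BYXI}(n)\ge(z-z^*)h(n,z)$ and locating the root of the quadratic $h$ in $[z^*-1,z^*]$, which is a cleaner way to discharge your final "routine difficulty" (and silently covers the endpoint $z=0$ as well).
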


The dual min-sums of the Bose $\stsn$ with the Bose YXI-labeling specified in Theorem~\ref{thm:Bose_YXI} equal $20$, $104$, and $291$ for $n = 9, 15,$ and $21$, respectively. As there is only one $\sts(9)$ up to isomorphism~\cite{colbourn1999triple}, and all possible $12!$ block-permutations of the Bose $\sts(9)$ produce dual min-sums not exceeding $20$, we conclude that when $n = 9$, the Bose Construction together with the Bose YXI-labeling indeed produces an $\sts(9)$ with maximum dual min-sum. As $n$ goes to infinity, the dual min-sum of the Bose $\stsn$ with the Bose YXI-labeling is very close to the upper bound given in Lemma~\ref{lem:upper_bound_dual}. More precisely, the dual min-sum is only a fraction of $55/72 > 3/4$ away from the upper bound. 

To prove Theorem~\ref{thm:Bose_YXI}, we compute the sum of the labels of all blocks containing an arbitrary pair $(z,i)$, $z \in [0,m-1]$ and $i \in [0,2]$, and then show that this sum is always greater than or equal to a polynomial in $n$. To this end, we make use of a couple of auxiliary lemmas.
The proof of the first lemma is obvious and hence omitted. 

\begin{lemma} 
\label{lem:Bose_Bx}
In the Bose YXI-labeling, the labels of the blocks $B_x$, $x \in [0,m-1]$, equal
\[
\lb(B_x) = m - 1 - x. 
\]
\end{lemma}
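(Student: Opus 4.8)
The plan is to read the labels of the Type~1 blocks directly off the definition of the Bose YXI-labeling, the key point being that these blocks are enumerated \emph{before} any Type~2 block and therefore occupy the initial segment of the label range $\{0,1,\ldots,n(n-1)/6-1\}$.

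First I would record the relevant block counts: there are exactly $m$ Type~1 blocks, one for each $x \in [0,m-1]$, and the YXI-labeling lists them in the order $x = m-1, m-2, \ldots, 0$ \emph{ahead} of every Type~2 block. As a sanity check on the ranges, I would verify $m + 3\binom{m}{2} = m(3m-1)/2 = n(n-1)/6$, confirming that these $m$ blocks account precisely for the labels $0,1,\ldots,m-1$ before the Type~2 labeling begins. Consequently the Type~1 blocks receive exactly the first $m$ labels, $0,1,\ldots,m-1$, in the stated order.

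Second I would match each block to its label by its position in the enumeration. Setting $j := m-1-x$, the block $B_x = B_{m-1-j}$ is the $(j+1)$-th entry of the sequence $x = m-1, m-2, \ldots, 0$ (the case $j=0$, i.e.\ $x = m-1$, being the very first entry), and the $(j+1)$-th block in the list is assigned the label $j$. Therefore $\lb(B_x) = j = m-1-x$, as claimed.

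The sole obstacle here is bookkeeping: reconciling the off-by-one shift between an entry's \emph{position} in the list (counted from~$1$) and its \emph{label} (counted from~$0$). This is exactly why the statement is immediate from the definition of the labeling, and is the reason the authors regard its proof as obvious.
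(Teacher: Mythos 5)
Your proof is correct and is exactly the ``obvious'' argument the paper has in mind --- the paper omits the proof of this lemma entirely, noting it follows immediately from the definition of the YXI-labeling, which lists the $m$ Type~1 blocks first in the order $x = m-1, m-2, \ldots, 0$, so $B_x$ receives label $m-1-x$. Your bookkeeping of the position-versus-label offset and the sanity check $m + 3\binom{m}{2} = n(n-1)/6$ are fine, if more than the paper bothers to write down.
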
 

\begin{lemma} 
\label{lem:Bose_Bxyi}
In the Bose YXI-labeling, the labels of the blocks $B_{x,y,i}$, $0 \leq x < y \leq m - 1$, $i \in [0,2]$, equal
\[
\lb(B_{x,y,i}) = m + i + 3x + \dfrac{3}{2}y(y-1),
\]
where $m = n/3$. 
\end{lemma}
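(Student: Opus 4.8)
The plan is to read off the label of a block as its ordinal position, counted from $0$, in the total ordering prescribed by the Bose YXI-labeling, and then to evaluate that position by directly counting all blocks that precede $B_{x,y,i}$. Since the labeling lists the Type~1 blocks first and the Type~2 blocks afterwards, the label of $B_{x,y,i}$ splits naturally into a fixed offset coming from the Type~1 blocks plus the number of Type~2 blocks that precede it.

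First I would dispose of the offset. By Lemma~\ref{lem:Bose_Bx} the $m$ Type~1 blocks $B_x$ occupy precisely the labels $0,1,\ldots,m-1$, so every Type~2 block has label at least $m$; equivalently, the first Type~2 block $B_{0,1,0}$ receives label $m$. It therefore suffices to prove that the number of Type~2 blocks strictly preceding $B_{x,y,i}$ in the ordering equals $\tfrac{3}{2}y(y-1)+3x+i$.

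Next I would count these preceding Type~2 blocks by partitioning them according to the three nested loops $(y,x,i)$ that define the ordering (outermost $y$, then $x$, then $i$). The blocks with smaller outer index $y' < y$ contribute, for each such $y'$, exactly $y'$ choices of the middle index times $3$ choices of $i$, giving
\[
\sum_{y'=1}^{y-1} 3y' \;=\; 3\binom{y}{2} \;=\; \frac{3}{2}\,y(y-1).
\]
Among the blocks with the same $y$ but smaller middle index $x' < x$ there are $3x$ of them (that is, $x$ choices of $x'$ times $3$ choices of $i$), and among the blocks sharing both $y$ and $x$ but with inner index $i' < i$ there are exactly $i$. Summing these three contributions and adding the offset $m$ yields $\lb(B_{x,y,i}) = m + \tfrac{3}{2}y(y-1) + 3x + i$, as claimed.

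I do not expect a genuine obstacle here: the result is a routine enumeration of a lexicographically ordered index set, and the only point meriting a remark is that $\tfrac{3}{2}y(y-1)$ is always an integer because $y(y-1)$ is a product of consecutive integers and hence even, so no half-integer issues arise. The care needed is merely bookkeeping—verifying that the ranges $y\in[1,m-1]$, $x\in[0,y-1]$, $i\in[0,2]$ are traversed in exactly this nested order and that the count above is exhaustive and non-overlapping.
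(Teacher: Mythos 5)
Your proposal is correct and follows essentially the same argument as the paper: both identify the label with the number of preceding blocks and decompose that count into the $m$ Type~1 blocks, the $3\sum_{y'=1}^{y-1}y'=\tfrac{3}{2}y(y-1)$ blocks with smaller outer index, the $3x$ blocks with the same $y$ but smaller $x'$, and the $i$ blocks with the same $(x,y)$ but smaller inner index. No differences worth noting.
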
 
\begin{proof} 
The label of a block in the Bose YXI-labeling is precisely the number of blocks preceding it in the given order. Therefore, 
\begin{align*}
\lb(B_{x,y,i}) &= \left|\left\{B_x \colon x \in [0,m-1]\right\}\right| + \left|\left\{B_{x,y,j} \colon 0 \leq j < i\right\}\right|
+ \left|\left\{B_{x',y,j} \colon 0 \leq x' < x,\ j \in [0,2]\right\}\right|\\
&\quad+ \left|\left\{B_{x',y',j} \colon 0 \leq x' < y' < y,\ j \in [0,2]\right\}\right|\\
&= m + i + 3x + 3\sum_{y'=1}^{y-1}y'\\
&= m + i + 3x + \dfrac{3}{2}y(y-1). 
\end{align*}
This proves the claimed result. 
\end{proof} 

\begin{lemma}
\label{lem:Bose_inverse}
For any $z \in [0,m-1]$, the set $P_z$ comprising all pairs of values $(x,y)$, such that $0 \leq x < y \leq m-1$, and $z = x\ob y$, is given as follows: 
\[
P_z = 
\begin{cases}
\{(x,2z-x) \colon x \in [0,z-1]\} \cup \left\{(x,2z-x+m) \colon x \in \left[2z+1,z+\frac{m-1}{2}\right]\right\},& \text{ if } z \leq \frac{m-1}{2},\\
\left\{(x,2z-x-m) \colon x \in \left[0,z-\frac{m+1}{2}\right]\right\} \cup \left\{(x,2z-x) \colon x \in \left[2z-m+1,z-1\right]\right\},& \text{ if } z \geq \frac{m+1}{2}.\\
\end{cases}
\] 
\end{lemma}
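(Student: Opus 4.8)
The plan is to reduce the defining relation $z = x \ob y$ to a single linear congruence and then solve that congruence under the ordering constraint $0 \le x < y \le m-1$.

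First I would observe that, because $\frac{m+1}{2}$ is the multiplicative inverse of $2$ modulo the odd integer $m$, the relation $z = x \ob y = \frac{m+1}{2}(x+y) \pmod m$ is equivalent to
\[
x + y \equiv 2z \pmod m.
\]
(Alternatively this follows at once from Lemmas~\ref{lem:Bxy} and~\ref{lem:B0y}.) Since $0 \le x < y \le m-1$ forces the integer $x+y$ to lie in $[1,2m-3]$, the congruence admits only two candidate values for $x+y$, namely the residue $2z \bmod m$ and $(2z \bmod m)+m$. The residue $2z \bmod m$ equals $2z$ when $z \le \frac{m-1}{2}$ and equals $2z-m$ when $z \ge \frac{m+1}{2}$, which is exactly the case split appearing in the statement.

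Next, for each candidate sum $x+y = S$ I would set $y = S-x$ and translate the constraints $x \ge 0$, $x < y$, and $y \le m-1$ into an interval for $x$. The inequality $x < y = S-x$ gives $x < S/2$, while $y \le m-1$ gives $x \ge S-(m-1)$; together with $x \ge 0$ these produce the claimed endpoints. Here the oddness of $m$ is used to rewrite the half-integer bounds, e.g. $x < z + m/2$ becomes $x \le z + \frac{m-1}{2}$, and to ensure the endpoints are integers. Carrying this out for $z \le \frac{m-1}{2}$ with $S = 2z$ and $S = 2z+m$ yields the two sets $\{(x,2z-x) \colon x \in [0,z-1]\}$ and $\{(x,2z-x+m)\colon x \in [2z+1, z+\frac{m-1}{2}]\}$, and the analogous computation for $z \ge \frac{m+1}{2}$ with $S = 2z-m$ and $S = 2z$ produces the remaining two sets.

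The routine but delicate part is the bookkeeping at the interval endpoints: I would verify in each case that the derived bounds are integers (via the oddness of $m$) and mutually consistent, allowing the intervals to be empty in boundary cases; for instance the second interval in the first case collapses when $z = \frac{m-1}{2}$, consistent with $x+y = 2z+m$ exceeding the maximum attainable value $2m-3$. I would also check that the two admissible sums $S$ and $S+m$ never yield the same pair, so that $P_z$ is precisely the disjoint union claimed. I expect this endpoint analysis — rather than the congruence itself — to be the only place where genuine care is required.
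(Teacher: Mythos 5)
Your proposal is correct, but it proves the lemma by a genuinely different route than the paper does. The paper's proof goes the other way around: it simply checks that every pair listed in the displayed expression for $P_z$ satisfies $0 \le x < y \le m-1$ and $x \ob y = z$, observes that the listed pairs are distinct, and then invokes the a priori count $|P_z| = (m-1)/2$ (which follows from $\ob$ being a quasigroup operation, so that the $m$ ordered solutions of $x \ob y = z$ contain exactly one with $x=y$ and hence $(m-1)/2$ unordered pairs with $x<y$) to conclude that the inclusion must be an equality. You instead derive the set from scratch: since $\frac{m+1}{2}$ is the inverse of $2$ modulo the odd $m$, the relation $x \ob y = z$ is equivalent to $x+y \equiv 2z \pmod m$, the constraint $1 \le x+y \le 2m-3$ leaves only the two candidate sums $2z \bmod m$ and $(2z \bmod m)+m$, and the interval endpoints for $x$ fall out of $x \ge 0$, $x < S - x$, and $S - x \le m-1$; your endpoint computations check out in both cases, including the degenerate empty intervals. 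Your argument is self-contained and explains where the formula comes from without needing the cardinality of $P_z$ in advance, at the cost of somewhat more careful case analysis; the paper's argument is shorter but presupposes both the correct guess for $P_z$ and the count $(m-1)/2$. Either is acceptable as a proof of the lemma.
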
 
\begin{proof} 
It is straightforward to verify that if $(x,y) \in P_z$, then $0 \leq x < y \leq m-1$ and $x \ob y = z$. Moreover, all points in the set are distinct. Since $|P_z| = (m-1)/2$, we conclude that the claim holds true. 
\end{proof} 

\begin{lemma} 
\label{lem:sB}
The sum of the labels of the blocks containing $(z,i)$, $z \in [0,m-1]$, $i \in [0,2]$, according to the Bose YXI-labeling, is given as follows:
\[
\lb(z,i) = 
\begin{cases}
\frac{5}{144}n^3 - \frac{6z+15}{144}n^2 - \frac{12z^2-84z-16i-8j-25}{48}n 
+ \frac{64z^3-84z^2-158z-16i-8j-25}{16},& \text{ for } z \leq \frac{m-1}{2},\\
\frac{1}{48}n^3 + \frac{42z-39}{144}n^2 - \frac{84z^2-108z-16i-8j-55}{48}n 
+ \frac{64z^3-84z^2-158z-16i-8j-25}{16},& \text{ for } z \geq \frac{m+1}{2},
\end{cases}
\] 
where $j \define (i - 1) \pmod 3 \in [0,2]$. 
\end{lemma}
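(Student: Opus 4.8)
The plan is to compute $\lb(z,i)$ directly as the sum of labels over all blocks containing the fixed point $(z,i)$, exploiting the fact that we have closed-form label expressions from Lemmas~\ref{lem:Bose_Bx} and~\ref{lem:Bose_Bxyi}. First I would enumerate the $(n-1)/2 = (3m-1)/2$ blocks that contain $(z,i)$ and classify them by type. Exactly one is the Type~1 block $B_z$, contributing $\lb(B_z) = m-1-z$. The remaining blocks are Type~2 blocks $B_{x,y,j}$ in which $(z,i)$ appears, and these split into two families: those where $(z,i)$ is one of the two ``first-coordinate'' points (i.e. $z \in \{x,y\}$ at level $j=i$), and those where $(z,i)$ is the ``carry'' point $(x\ob y, i+1 \bmod 3)$ coming from level $j \equiv i-1 \pmod 3$. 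This is exactly why the quantity $j \define (i-1)\pmod 3$ enters the final formula.

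Next I would handle each family separately. For the carry family, the relevant blocks are indexed by pairs $(x,y)$ with $x\ob y = z$ at level $j$, which is precisely the set $P_z$ described in Lemma~\ref{lem:Bose_inverse}; I would substitute the parametrization of $P_z$ into the label formula $\lb(B_{x,y,j}) = m + j + 3x + \tfrac{3}{2}y(y-1)$ and sum over the $(m-1)/2$ pairs, treating the cases $z \le (m-1)/2$ and $z \ge (m+1)/2$ according to the two branches of $P_z$. For the first-coordinate family, $(z,i)$ plays the role of $x$ or $y$ in blocks $B_{x,y,i}$ where the \emph{other} first-coordinate point ranges over $[0,m-1]\setminus\{z\}$; here one must be careful to respect the ordering convention $x<y$, so the contribution of $z$ to the $3x$ term versus the $\tfrac32 y(y-1)$ term depends on whether the partner is smaller or larger than $z$. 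Summing the label formula over these $m-1$ blocks yields another polynomial in $z$ and $m$.

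The main obstacle will be the bookkeeping in the first-coordinate family together with the case split induced by $P_z$: the parametrizations $(x,2z-x)$, $(x,2z-x+m)$, etc., produce sums of the form $\sum x$ and $\sum \tfrac32 y(y-1)$ over arithmetic ranges whose endpoints involve $z$ and $m$, and the two branches of Lemma~\ref{lem:Bose_inverse} force the two-branch structure of the final answer. I would compute $\sum_{(x,y)\in P_z} \big(3x + \tfrac32 y(y-1)\big)$ using standard formulas for $\sum x$, $\sum x^2$ over each subinterval, being vigilant about off-by-one errors at the boundaries $2z+1$, $z+\tfrac{m-1}{2}$, $z-\tfrac{m+1}{2}$, and $2z-m+1$.

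Finally I would collect the three contributions—the single Type~1 term, the first-coordinate sum, and the carry sum over $P_z$—add the constant shifts $m+i$ and $m+j$ appropriately, and substitute $m = n/3$ throughout to re-express everything as a polynomial in $n$. Matching the resulting expressions against the two claimed branches for $z \le \tfrac{m-1}{2}$ and $z \ge \tfrac{m+1}{2}$ completes the proof; I expect the $n^3$ coefficients $\tfrac{5}{144}$ and $\tfrac{1}{48}$ to emerge from the dominant $\tfrac32 y(y-1)$ terms summed over the respective ranges, and the dependence on $i$ and $j$ to appear only in the lower-order (linear and constant) terms, as the stated formula indicates.
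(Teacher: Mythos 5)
Your proposal is correct and follows essentially the same route as the paper's proof: the paper likewise splits the blocks containing $(z,i)$ into the unique block $B_z$, the blocks $B_{z,y,i}$ and $B_{x,z,i}$ where $(z,i)$ sits at its own level (your ``first-coordinate family,'' which the paper simply separates into two types according to whether the partner is larger or smaller than $z$), and the ``carry'' blocks $B_{x,y,j}$ with $x\ob y = z$ and $j \equiv i-1 \pmod 3$ parametrized by $P_z$ from Lemma~\ref{lem:Bose_inverse}, then sums the closed-form labels from Lemmas~\ref{lem:Bose_Bx} and~\ref{lem:Bose_Bxyi} over each family and substitutes $m = n/3$. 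The two-branch structure of the answer arises exactly as you predict, from the two cases of $P_z$.
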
 
\begin{proof} 
The blocks containing $(z,i)$ include:
\begin{description}
	\item[Type 1:] $B_z = \{(z,0),(z,1),(z,2)\}$. 
	\item[Type 2:] $B_{z,y,i} = \{(z,i),(y,i),(z\ob y,(i+1) \pmod 3)\}$, for $z < y \leq m-1$.
	\item[Type 3:] $B_{x,z,i} = \{(x,i),(z,i),(x\ob z,(i+1) \pmod 3)\}$, for $0 \leq x < z$.
	\item[Type 4:] $B_{x,y,j} = \{(x,j),(y,j),(z,i)\}$, for $0 \leq x < y \leq m-1$ such that $x\ob y = z$ and $i=j+1\, (\mod \, 3)$.
\end{description}
 
Clearly, there is a unique block of Type~1 that contains $(z,i)$, namely $B_z$. According to Lemma~\ref{lem:Bose_Bx}, this block has label
\begin{equation} 
\label{eq:Bose_Type1}
\ell^{\text{T1}}_B(z,i) \define \lb(B_z) = m - 1 - z. 
\end{equation} 
The blocks $B_{z,y,i}$ of Type~2, where $y > z$, have a sum of labels that may be obtained by invoking Lemma~\ref{lem:Bose_Bxyi}:
\begin{equation} 
\label{eq:Bose_Type2}
\begin{aligned}
\ell^{\text{T2}}_B(z,i) &\define \sum_{y = z+1}^{m-1} \lb\big(B_{z,y,i}\big)
= \sum_{y = z+1}^{m-1} \Big(m + i + 3z + \dfrac{3}{2}y(y-1)\Big)\\
&= \dfrac{m^3}{2} - \dfrac{m^2}{2} + (2z+i)m - \dfrac{z^3+6z^2+5z+2iz+2i}{2}. 
\end{aligned}
\end{equation} 
The blocks $B_{x,z,i}$ of Type~3, where $x < z$, have the following sum of labels which again may be computed using Lemma~\ref{lem:Bose_Bxyi},
\begin{equation} 
\label{eq:Bose_Type3}
\begin{aligned}
\ell^{\text{T3}}_B(z,i) \define \sum_{x = 0}^{z-1} \lb\big(B_{x,z,i}\big)
= \sum_{x = 0}^{z-1} \Big(m + i + 3x + \dfrac{3}{2}z(z-1)\Big)
= zm + \Big(\dfrac{3z^3}{2}-\dfrac{3z}{2}+iz\Big). 
\end{aligned}
\end{equation} 
The sums of labels of blocks of Type~4 containing $(z,i)$ depends on whether $z \leq (m-1)/2$ or
$z \geq (m+1)/2$.\\ 
 
\textbf{Case 1: $0 \leq z \leq (m-1)/2$.} From Lemma~\ref{lem:Bose_inverse}, the sum of labels of blocks of Type~4 containing $(z,i)$ equals
\begin{equation}
\label{eq:Bose_Type4_Case1}
\begin{split}
\ell^{\text{T4}}_B(z,i) &\define \sum_{x = 0}^{z-1} \lb\big(B_{x,2z-x,j}\big)
+ \sum_{x = 2z+1}^{z+\frac{m-1}{2}} \lb\big(B_{x,2z-x+m,j}\big)\\
&= \sum_{x = 0}^{z-1}\Big(m+j+3x+\dfrac{3}{2}(2z-x)(2z-x-1)\Big)\\
&+ \sum_{x = 2z+1}^{z+\frac{m-1}{2}}\Big(m+j+3x+\dfrac{3}{2}(2z-x+m)(2z-x+m-1)\Big)\\
&= \dfrac{7}{16}m^3 - \dfrac{6z+7}{16}m^2 - \dfrac{12z^2-36z-8j-9}{16}m + 
\dfrac{48z^3-36z^2-78z-8j-9}{16}. 
\end{split}
\end{equation} 

\textbf{Case 2: $(m+1)/2 \leq z \leq m-1$.} By Lemma~\ref{lem:Bose_inverse}, the sum of labels of blocks of Type~4 containing $(z,i)$ equals
\begin{equation}
\label{eq:Bose_Type4_Case2}
\begin{split}
\ell^{\text{T4}}_B(z,i) &\define \sum_{x = 0}^{z-\frac{m+1}{2}} \lb\big(B_{x,2z-x-m,j}\big)
+ \sum_{x = 2z-m+1}^{z-1} \lb\big(B_{x,2z-x,j}\big)\\
&= \sum_{x = 0}^{z-\frac{m+1}{2}}\Big(m+j+3x+\dfrac{3}{2}(2z-x-m)(2z-x-m-1)\Big)\\
&+ \sum_{x = 2z-m+1}^{z-1}\Big(m+j+3x+\dfrac{3}{2}(2z-x)(2z-x-1)\Big)\\
&= \dfrac{1}{16}m^3 + \dfrac{42z-31}{16}m^2 - \dfrac{84z^2-60z-8j-39}{16}m + 
\dfrac{48z^3-36z^2-78z-8j-9}{16}.  
\end{split}
\end{equation} 

Summing up \eqref{eq:Bose_Type1}, \eqref{eq:Bose_Type2}, \eqref{eq:Bose_Type3}, and \eqref{eq:Bose_Type4_Case1} or \eqref{eq:Bose_Type4_Case2}, we obtain
\[
\begin{split}
\ell_B(z,i) &= \ell^{\text{T1}}_B(z,i) + \ell^{\text{T2}}_B(z,i) + \ell^{\text{T3}}_B(z,i)
+ \ell^{\text{T4}}_B(z,i)\\
&=
\begin{cases}
\frac{15}{16}m^3 - \frac{6z+15}{16}m^2 - \frac{12z^2-84z-16i-8j-25}{16}m 
+ \frac{64z^3-84z^2-158z-16i-8j-25}{16},& \text{ for } z \leq \frac{m-1}{2},\\
\frac{9}{16}m^3 + \frac{42z-39}{16}m^2 - \frac{84z^2-108z-16i-8j-55}{16}m 
+ \frac{64z^3-84z^2-158z-16i-8j-25}{16},& \text{ for } z \geq \frac{m+1}{2}. 
\end{cases}
\end{split}
\]
The proof follows by replacing $m$ with $n/3$ in the above formula.
\end{proof} 

We are now ready to prove Theorem~\ref{thm:Bose_YXI}. 

\begin{proof}[Proof of Theorem~\ref{thm:Bose_YXI}]
It suffices to show that $\ell_B(z,i) \geq f_{BYXI}(n)$ for every permissible value of $n \geq 27$ and for every $z \in [0,m-1]$, $i \in [0,2]$. Moreover, the equality holds when $i = 0,1$, and 
\[
z = 
\begin{cases}
(n-3)/12,& \text{ when } n/3 \equiv 1 \pmod 4,\\ 
(n-9)/12,& \text{ when } n/3 \equiv 3 \pmod 4.  
\end{cases}
\]
We first show that $\ell_B(z,i) > \ell_B\big(z-\frac{m+1}{2},i\big)$ for every $z \geq \frac{m+1}{2}$, and then proceed to prove that $\ell_B(z,i) \geq f_{BYXI}(n)$ for every $z \in \big[0,\frac{m-1}{2}\big]$ and $i \in [0,2]$. 

Suppose that $z = \frac{m+1}{2} + t = \frac{n+3}{6} + t$, where $t \in [0,\frac{m-1}{2}-1]$. The goal is to show that
$\ell_B(z,i) \geq \ell_B(t,i)$. From Lemma~\ref{lem:sB}, we have
\[
\begin{split}
\ell_B(z,i) &= \frac{1}{48}n^3 + \frac{42z-39}{144}n^2 - \frac{84z^2-108z-16i-8j-55}{48}n 
+ \frac{64z^3-84z^2-158z-16i-8j-25}{16}\\
&= \frac{17}{432}n^3 + \frac{2t-1}{48}n^2 + \frac{12t^2+36t+16i+8j-9}{48}n 
+ \frac{64t^3+12t^2-194t-16i-8j-117}{16}.\\
\end{split}
\] 
Also, from Lemma~\ref{lem:sB}, it holds that
\[
\ell_B(t,i) = \frac{5}{144}n^3 - \frac{6t+15}{144}n^2 - \frac{12t^2-84t-16i-8j-25}{48}n 
+ \frac{64t^3-84t^2-158t-16i-8j-25}{16}.
\]
Therefore, we deduce that
\[
\ell_B(z,i) - \ell_B(t,i) = 
\dfrac{1}{216}\Big((108n+1296)t^2 + 18(n^2-12n-27)t + (n^3+18n^2 - 153n - 1242)\Big) > 0,
\]
since $t \geq 0$ and the coefficients of all powers of $t$ are positive for $n \geq 27$. Thus, for every $z \geq \frac{m+1}{2}$ and $i \in [0,2]$, we have $\ell_B(z,i) > \ell_B\big(z-\frac{m+1}{2},i\big)$. 

It remains to prove that $\ell_B(z,i) \geq f_{BYXI}(n)$ for every $z \in \big[0,\frac{m-1}{2}\big]$ and $i \in [0,2]$. To this end, let 
\[
g(n,z,i) \define \ell_B(z,i) - f_{BYXI}(n).
\]
By its definition, $f_{BYXI}(n)$ depends on the congruence class of $m=n/3 \pmod 4$. Therefore, we separately consider two different cases.\\

\textbf{Case 1: $n/3 \equiv 1 \pmod 4$ and $n \geq 27$.} We have
\[
\begin{split}
g(n,z,i) &= 
\Bigg(\dfrac{5}{144}n^3 - \dfrac{6z+15}{144}n^2 - \dfrac{12z^2-84z-16i-8j-25}{48}n + \dfrac{64z^3-84z^2-158z-16i-8j-25}{16}\Bigg)\\
&- \Bigg( \dfrac{55}{1728}n^3 + \dfrac{1}{192}n^2 - \dfrac{9}{64}n - \dfrac{31}{64}\Bigg)\\
&= 
\Bigg(\dfrac{5}{144}n^3 - \dfrac{6z+15}{144}n^2 - \dfrac{12z^2-84z-41}{48}n + \dfrac{64z^3-84z^2-158z-41}{16}\Bigg)\\
&- \Bigg( \dfrac{55}{1728}n^3 + \dfrac{1}{192}n^2 - \dfrac{9}{64}n - \dfrac{31}{64}\Bigg) + \Bigg(\dfrac{16i+8j-16}{48}n-\dfrac{16i+8j-16}{16}\Bigg). 
\end{split}
\]
The last term in the above sum is nonnegative, since
\[
\dfrac{16i+8j-16}{48}n-\dfrac{16i+8j-16}{16} 
= \dfrac{1}{48}(16i+8j-16)(n-3) \geq 0
\]
for every $i,j \in [0,2]$, $j \equiv (i - 1) \pmod 3$, and $n \geq 3$. 
For $n \geq 27$, the equality holds if and only if $i = 1$ and $j = 0$ or $i = 0$ and $j = 2$. Therefore, 
\[
\begin{split}
g(n,z,i) 
&\geq 
\Bigg(\dfrac{5}{144}n^3 - \dfrac{6z+15}{144}n^2 - \dfrac{12z^2-84z-41}{48}n + \dfrac{64z^3-84z^2-158z-41}{16}\Bigg)\\
&- \Bigg( \dfrac{55}{1728}n^3 + \dfrac{1}{192}n^2 - \dfrac{9}{64}n - \dfrac{31}{64}\Bigg)\\
&= \Big(z-z^*\Big)h(n,z),
\end{split}
\]
where
\[
z^* \define \dfrac{n-3}{12}
\]
and
\[
h(n,z) \define 4z^2 + \dfrac{n-75}{12}z - \dfrac{5n^2-174n+1197}{144}.
\]
In order to prove that $g(n,z,i) \geq 0$, it suffices to show that $z-z^*$ and $h(n,z)$ do not have opposite signs for $z \in [0,n/3-1]$ and $n \geq 27$. We hence proceed to analyze the sign of the quadratic polynomial $h(n,z)$ in $z$. 
First, we compute
\[
\Delta(n) = \Big(\dfrac{n-75}{12}\Big)^2 + 16\dfrac{5n^2-174n+1197}{144}
= \dfrac{9n^2-326n+2753}{16}. 
\]
It is easy to see that $\Delta(n) > 0$ whenever $n \geq 23$. As we assume that $n \geq 27$, $h(n,z)$ has two distinct roots
\[
z_1 = \dfrac{-\frac{1}{12}(n-75)-\sqrt{\Delta(n)}}{8},
\quad z_2 = \dfrac{-\frac{1}{12}(n-75)+\sqrt{\Delta(n)}}{8}. 
\]
It can be verified that $z_1 < 0$ when $n \geq 26$.
Since $h(n,z) \geq 0$ when $z \geq z_2$
and $h(n,z) \leq 0$ when $0 \leq z \leq z_2$, for $(z-z^*)h(n,z)$ to be nonnegative, it suffices to show that 
\[
z^*-1 \leq z_2 \leq z^*.
\] 
Then, if $z \geq z^* \geq z_2$, we have $z-z^* \geq 0$ and $h(n,z) \geq 0$, while if $z \leq z^*-1 \leq z_2$, we have $z-z^* < 0$ and $h(n,z) \leq 0$. Both cases lead to $(z-z^*)h(n,z) \geq 0$ as $z \in [0,\frac{m-1}{2}]$ is a nonnegative integer. 
We hence have 
\[
z_2 \leq z^* \Longleftrightarrow \dfrac{-\frac{1}{12}(n-75)+\sqrt{\Delta(n)}}{8} \leq \dfrac{n-3}{12} 
\Longleftrightarrow \sqrt{9n^2-326n+2753} \leq 3n-33
\Longleftrightarrow n \geq 13,
\]
which is true because we assumed that $n \geq 27$. Similarly, 
\[
z^*-1 \leq z_2 \Longleftrightarrow \dfrac{n-3}{12} - 1 \leq \dfrac{-\frac{1}{12}(n-75)+\sqrt{\Delta(n)}}{8}
\Longleftrightarrow 3n-65 \leq \sqrt{9n^2-326n+2753}
\Longleftrightarrow n \geq 23.
\]
Therefore, as long as $n \geq 27$ (the smallest valid value of $n$ greater than or equal to $26$), it holds that $g(n,z,i) \geq (z-z^*)h(n,z) \geq 0$, which implies that $\ell_B(z,i) \geq f_{BYXI}(n)$, the inequality we set out to prove.
It is clear that when $z = \frac{m-1}{4} = \frac{n-3}{12}$ and $i = 0,1$, equality is met, that is, $\ell_B(z,i) = f_{BYXI}(n)$. Thus, in this case, the dual min-sum of the $\stsn$ in the Bose Construction together with the Bose YXI-labeling of blocks is precisely $f_{BYXI}(n)$.\\

\textbf{Case 2: $n/3 \equiv 3 \pmod 4$ and $n \geq 33$.} Similarly as in the previous case, we have
\[
\begin{split}
g(n,z,i) 
&= \Bigg(\dfrac{5}{144}n^3 - \dfrac{6z+15}{144}n^2 - \dfrac{12z^2-84z-41}{48}n + \dfrac{64z^3-84z^2-158z-41}{16}\Bigg)\\
&- \Bigg( \dfrac{55}{1728}n^3 + \dfrac{1}{192}n^2 - \dfrac{13}{64}n + \dfrac{13}{64}\Bigg) + \Bigg(\dfrac{16i+8j-16}{48}n-\dfrac{16i+8j-16}{16}\Bigg)\\
&\geq (z-z^{**})k(n,z),
\end{split}
\]
where $z^{**} \define \frac{n-9}{12}$ and $k(n,z) \define 4z^2 + \frac{n-99}{12}z
- \frac{5n^2-144n+531}{144}$. As long as $n \geq 19$, the quadratic polynomial $k(n,z)$ has two roots
\[
z_1 = \dfrac{-\frac{1}{12}(n-99)-\frac{1}{4}\sqrt{9n^2-278n+2033}}{8},
\quad z_2 = \dfrac{-\frac{1}{12}(n-99)+\frac{1}{4}\sqrt{9n^2-278n+2033}}{8}.
\]
It can be easily verified that when $n \geq 25$, we have $z_1 < 0$. Also, we have
\[
z^{**} \leq z_2 \leq z^{**} + 1,
\]
where the first inequality holds when $n \geq 19$ and the second inequality holds when $n \geq 11$. The smallest valid $n$ greater than or equal to $25$ is $n = 33$. For $n \geq 33$, if $z \geq z^{**}+1 \geq z_2$, then $z-z^{**} > 0$ and $k(n,z) \geq 0$, and if $z \leq z^{**}$, then $z-z^{**} < 0$ and $k(n,z) \leq 0$. Both cases lead to $g(n,z,i) \geq (z-z^{**})k(n,z) \geq 0$, which implies that $\ell_B(z,i) \geq f_{BYXI}$, as desired. The equality is obtained when $z = \frac{m-3}{4} = \frac{n-9}{12}$ and $i = 0,1$. This completes the proof of Theorem~\ref{thm:Bose_YXI}. 
\end{proof} 


\begin{proposition}
The Bose $\stsn$, in which the blocks are labeled according to the Bose YXI-labeling, has dual max-sum equal to
\[
g_{BYXI}(n) = \dfrac{31}{432}n^3 - \dfrac{9}{16}n^2 + \dfrac{35}{16}n - \dfrac{55}{16}, \quad n \geq 15.
\]
Therefore, the dual difference-sum and the dual ratio-sum of this construction and block labeling are $\O\big(23n^3/1728\big)$ and $\O(124/55)$, respectively. 
\begin{proof}
The proof that establishes the dual max-sum is similar to the proof that establishes the dual min-sum and hence is omitted. Note that it suffices to show that $g_{BYXI}(n) - \ell_B(z,i) \geq 0$ for all $m-1 \geq z \geq \frac{m+1}{2}$, $i \in [0,2]$, $n \geq 15$, and that equality holds if and only if $(z,i) = (m-1,2)$.
\end{proof}
\end{proposition}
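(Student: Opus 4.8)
The plan is to mirror the proof of Theorem~\ref{thm:Bose_YXI}, replacing the search for the smallest value of $\ell_B(z,i)$ by the search for the largest one. As in that proof, the dual max-sum equals $\max_{z \in [0,m-1],\, i \in [0,2]} \ell_B(z,i)$, where $\ell_B(z,i)$ is the closed form computed in Lemma~\ref{lem:sB}. The goal is to show that this maximum is attained exactly at $(z,i) = (m-1,2)$ and that the resulting value, after substituting $m = n/3$, is $g_{BYXI}(n)$.

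First I would reuse the monotonicity already established inside the proof of Theorem~\ref{thm:Bose_YXI}, namely $\ell_B(z,i) > \ell_B\big(z - \tfrac{m+1}{2}, i\big)$ for every $z \ge \tfrac{m+1}{2}$. This shows that each lower-range value with $z \in [0, \tfrac{m-3}{2}]$ is strictly dominated by an upper-range value, so the maximum cannot occur there. The only lower-range index left uncovered by this pairing is $z = \tfrac{m-1}{2}$, which I would dispatch by a direct comparison of the first branch of Lemma~\ref{lem:sB} at $z = \tfrac{m-1}{2}$ against $g_{BYXI}(n)$. Hence it suffices to maximize $\ell_B(z,i)$ over the upper range $\tfrac{m+1}{2} \le z \le m-1$, exactly as flagged in the statement.

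Within the upper range I would optimize over $i$ and over $z$ separately. The dependence on $i$ (and $j \equiv i-1 \pmod 3$) in the second branch of Lemma~\ref{lem:sB} collapses to the single term $(16i + 8j)\,\tfrac{n-3}{48}$, which for $n \ge 15$ is increasing in $16i + 8j$; among the admissible pairs this is maximized by $i = 2,\ j = 1$. For the $z$-dependence I would form the forward difference $\ell_B(z+1,i) - \ell_B(z,i)$, a quadratic in $z$ with positive leading coefficient $12$ whose discriminant simplifies to $\tfrac{1}{4}\big(-7n^2 - 138n + 2145\big)$; this is negative for all $n \ge 15$, so the forward difference is strictly positive and $\ell_B(z,i)$ is strictly increasing in $z$. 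Consequently the upper-range maximum is attained at the right endpoint $z = m-1$. Substituting $z = m-1$, $i = 2$, $j = 1$ into the second branch of Lemma~\ref{lem:sB} and replacing $m$ by $n/3$ then yields $g_{BYXI}(n)$, with equality holding precisely at $(z,i) = (m-1,2)$.

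Finally, the dual difference-sum $g_{BYXI}(n) - f_{BYXI}(n)$ and the dual ratio-sum $g_{BYXI}(n)/f_{BYXI}(n)$ follow immediately by combining this value with the dual min-sum $f_{BYXI}(n)$ of Theorem~\ref{thm:Bose_YXI}; in particular the ratio of leading coefficients $\tfrac{31}{432} \big/ \tfrac{55}{1728} = \tfrac{124}{55}$ gives the stated asymptotic ratio-sum, and the difference of the two cubics gives the claimed cubic order. The main obstacle is the sign analysis of the forward-difference discriminant and confirming its negativity uniformly over the relevant range of $n$, together with the routine but lengthy algebraic simplification at $z = m-1$; the treatment of the single unpaired index $z = \tfrac{m-1}{2}$ is only a minor additional check.
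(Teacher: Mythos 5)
Your proposal is correct, and it follows the strategy the paper indicates for its (omitted) proof: reduce to the range $\frac{m+1}{2} \le z \le m-1$ and show the maximum of $\ell_B(z,i)$ there is attained exactly at $(z,i)=(m-1,2)$ with value $g_{BYXI}(n)$. I verified your key computations: the $i$-dependence on the upper branch of Lemma~\ref{lem:sB} collapses to $\frac{(16i+8j)(n-3)}{48}$, maximized uniquely at $(i,j)=(2,1)$; the forward difference $\ell_B(z+1,i)-\ell_B(z,i) = 12z^2 + \frac{3-7n}{2}z + \frac{7}{24}n^2+\frac{1}{2}n-\frac{89}{8}$ has discriminant $\frac{1}{4}(-7n^2-138n+2145)$, negative for all $n\ge 11$, so $\ell_B$ is strictly increasing in $z$ on the upper range; and substituting $z=m-1=\frac{n}{3}-1$, $i=2$, $j=1$ does produce $\frac{31}{432}n^3-\frac{9}{16}n^2+\frac{35}{16}n-\frac{55}{16}$. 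Where you differ from the paper's implied route is in the internal mechanics of the upper-range analysis: the min-sum proof compares $\ell_B(z,i)$ against the target via a factorization $(z-z^*)h(n,z)$ and a sign analysis of the quadratic factor, whereas you establish monotonicity in $z$ by forward differences and then only evaluate at the endpoint; the latter is cleaner here because the extremum sits at the boundary rather than in the interior. You also correctly flag a point the paper's one-line remark glosses over: the pairing $\ell_B(z,i) > \ell_B\big(z-\frac{m+1}{2},i\big)$ only dominates lower-range indices $t\in[0,\frac{m-3}{2}]$, so $z=\frac{m-1}{2}$ genuinely requires the separate direct check you propose (it passes: the gap $g_{BYXI}(n)-\ell_B\big(\frac{m-1}{2},2\big)=\frac{7}{216}n^3-\frac{1}{2}n^2+\frac{49}{24}n-\frac{5}{2}$ is positive for $n\ge 15$). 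The concluding derivation of the difference-sum and ratio-sum from $f_{BYXI}$ and $g_{BYXI}$ is routine as you say.
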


The following more natural block ordering of the $\stsn$ produced by the Bose Construction gives a smaller dual min-sum compared to the Bose YXI-labeling. We omit the proof.

\begin{theorem}
\label{thm:Bose_dual}
The dual of the Bose $\stsn$, where the blocks are labeled from $0$ to $\frac{n(n-1)}{6}-1$ in the following order: $B_x$, $x = 0,1,\ldots, m-1$, followed by $B_{x,y,i}$, $x = 0,1,\ldots,m-2$, $y = x+1,x+2,\ldots,m-1$, $i = 0,1,2$, has min-sum equal to
\[
f_B(n) = \dfrac{5}{432}n^3 + \dfrac{19}{48}n^2 - \dfrac{133}{48}n + \dfrac{71}{16},\quad n \geq 9. 
\] 
\end{theorem}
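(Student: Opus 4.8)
The plan is to mirror the proof of Theorem~\ref{thm:Bose_YXI}, re-deriving each ingredient for the new block ordering while exploiting the fact that the point set $\X'$ and the operation $\ob$ are unchanged, so that Lemma~\ref{lem:Bose_inverse} (the description of $P_z$) applies verbatim. First I would recompute the two labeling lemmas. Under the ordering specified in Theorem~\ref{thm:Bose_dual} the Type-1 blocks now receive labels $\ell_B(B_x)=x$ (in place of Lemma~\ref{lem:Bose_Bx}), and counting the blocks that precede $B_{x,y,i}$ --- namely the $m$ Type-1 blocks, the $3\sum_{x'=0}^{x-1}(m-1-x')$ blocks with smaller first index, the $3(y-x-1)$ blocks with the same first index and smaller second index, and the $i$ blocks with the same pair $(x,y)$ --- gives the analog of Lemma~\ref{lem:Bose_Bxyi},
\[
\ell_B(B_{x,y,i}) = m - 3 + 3xm - 6x - \frac{3}{2}x(x-1) + 3y + i .
\]

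Next I would compute $\ell_B(z,i)$, the sum of the labels of all blocks containing a fixed point $(z,i)$, using exactly the Type~1--4 decomposition of Lemma~\ref{lem:sB}. The Type~2 and Type~3 contributions are single-index sums of the new label polynomial over $y>z$ and $x<z$, respectively, while the Type~4 contribution again splits according to whether $z\le (m-1)/2$ or $z\ge (m+1)/2$, summing the label over the two families of pairs $(x,y)\in P_z$ supplied by Lemma~\ref{lem:Bose_inverse}. Assembling these yields a closed form for $\ell_B(z,i)$ that is a piecewise cubic in $z$, with the split at $z=(m\pm 1)/2$, and affine in $i$.

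It then remains to minimize $\ell_B(z,i)$ over $z\in[0,m-1]$ and $i\in[0,2]$. As in the proof of Theorem~\ref{thm:Bose_YXI}, I would first establish a folding inequality $\ell_B(z,i)\ge \ell_B\big(z-\frac{m+1}{2},i\big)$ for $z\ge (m+1)/2$, which maps the upper branch into the lower branch and thereby confines the minimum to $z\in[0,(m-1)/2]$. On this branch the essential new feature --- and what makes $f_B(n)$ a single polynomial rather than a residue-dependent one as in Theorem~\ref{thm:Bose_YXI} --- is that the lower-branch cubic is \emph{strictly increasing} in $z$: its derivative is a quadratic in $z$ with positive leading coefficient and negative discriminant, hence positive throughout. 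Consequently the minimum over $z$ is attained at the endpoint $z=0$, so no rounding, and no case split modulo $4$, is needed. Minimizing the affine $i$-dependence then shows the minimum occurs at $i\in\{0,1\}$, and a direct evaluation of $\ell_B(0,0)$ --- for which the Type~1 and Type~3 contributions vanish, leaving only the Type~2 sum over $B_{0,y,i}$ and the Type~4 sum over the pairs $(x,m-x)$ --- yields precisely $f_B(n)$.

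The main obstacle is the same careful bookkeeping as in Lemma~\ref{lem:sB}: correctly enumerating the Type~4 blocks through the two families of Lemma~\ref{lem:Bose_inverse} and carrying out the polynomial algebra without error, together with verifying the folding inequality and the sign of the lower-branch derivative. These steps are routine but lengthy; the conceptual simplification relative to Theorem~\ref{thm:Bose_YXI} is that the optimal point now sits at the boundary $z=0$, which is exactly why the resulting expression is uniform in $n$. A useful consistency check is $f_B(9)=20$ and $f_B(15)=91$, the latter confirming that this more natural ordering indeed produces a strictly smaller dual min-sum than the Bose YXI-labeling, which gives $104$ at $n=15$.
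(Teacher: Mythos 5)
Your proposal is correct and follows exactly the template the paper intends for this result (the paper omits the proof, deferring implicitly to the argument of Theorem~\ref{thm:Bose_YXI}): your label formula $\ell_B(B_{x,y,i}) = m-3+3xm-6x-\tfrac{3}{2}x(x-1)+3y+i$ is right, and a direct evaluation confirms $\ell_B(0,0)=\tfrac{(m-1)(9m-7)}{2}+\tfrac{5(m-3)(m^2-1)}{16}=f_B(n)$ with the minimum attained at $z=0$, $i\in\{0,1\}$ (e.g., $20$, $91$, $228$ for $n=9,15,21$). Your identification of the structural simplification --- that the lower-branch cubic is increasing (leading-order derivative $\tfrac{33}{8}-\tfrac{27}{2}\zeta+12\zeta^2$ in $\zeta=z/m$ has negative discriminant), so the optimum sits at the boundary and no residue-class split arises --- is the correct explanation for why $f_B(n)$ is a single polynomial.
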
 

We state next a similar result for the dual of the Skolem $\stsn$ (Theorem~\ref{thm:Skolem_YXI}). 
As the full proof is tedious and mostly follows along the same lines as the corresponding proof of Theorem~\ref{thm:Bose_YXI}, 
we only present the key lemmas and important formulas without including all details. 

\begin{theorem}
\label{thm:Skolem_YXI}
The dual of the Skolem $\stsn$, where the blocks are labeled from $0$ to $\frac{n(n-1)}{6}-1$ in the following order, referred to as the Skolem YXI-labeling: $B_x$, $x = 0,1,\ldots, m/2-1$, followed by $B_{x,y,i}$, $y = 1,2,\ldots,m-1$, $x = 0,1,\ldots,y-1$, $i = 0,1,2$, followed by $B_{x,i}$, $x = 0,1,\ldots,m/2-1$, $i = 0,1,2$, where $m = (n-1)/3$, and the blocks $B_{x,i}$ are labeled as 
\[
\dfrac{n(n-1)}{6} - \dfrac{n-1}{2} + 
\begin{cases}
(n-1)/6 + 2x, &i = 0,\\
x, &i = 1,\\
(n-1)/6 + 2x + 1, &i = 2,
\end{cases}
\]
has min-sum equal to
\[
f_{SYXI}(n) = 
\begin{cases}
\dfrac{55}{1728}n^3 - \dfrac{31}{576}n^2 - \dfrac{137}{576}n - \dfrac{1279}{1728},& \text{ if } (n-1)/3 \equiv 0 \pmod 4, \ n \geq 13, \\\\
\dfrac{55}{1728}n^3 - \dfrac{31}{576}n^2 - \dfrac{173}{576}n + \dfrac{1421}{1728},& \text{ if } (n-1)/3 \equiv 2 \pmod 4, \ n \geq 7.
\end{cases}
\] 
\end{theorem}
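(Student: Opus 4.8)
The plan is to follow the proof of Theorem~\ref{thm:Bose_YXI} almost verbatim, the only genuinely new features being the special point $\infty$ and the Type~3 construction blocks $B_{x,i}$. As there, I would proceed in four stages: (i) obtain closed forms for all block labels under the Skolem YXI-labeling; (ii) for each point, sum the labels of all blocks containing it to produce a piecewise-cubic quantity $\ls(z,i)$, together with a separate bound for $\ls(\infty)$; (iii) reduce the range of $z$ to a lower half; and (iv) finish with a quadratic sign analysis, split into the two congruence classes of $m=(n-1)/3 \pmod 4$.

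First I would record the block labels, the analogs of Lemmas~\ref{lem:Bose_Bx} and~\ref{lem:Bose_Bxyi}. Since the $m/2$ Type~1 blocks are now ordered $x=0,\dots,m/2-1$, block $B_x$ receives label $x$; the Type~2 blocks, preceded by only $m/2$ blocks rather than $m$, receive label $\ls(B_{x,y,i}) = m/2 + i + 3x + \tfrac{3}{2}y(y-1)$; and the Type~3 blocks carry the explicit labels stated in the theorem. I would also prove the $\os$-analog of Lemma~\ref{lem:Bose_inverse}: for each $z$ there is a unique $s\in[0,m-1]$ with $0\os s = z$, so that $x\os y = z$ iff $x+y\equiv s \pmod m$, and the parity of $s$ (equivalently, whether $z<m/2$ or $z\ge m/2$) governs both the size and the shape of the preimage set $P_z=\{(x,y):x<y,\ x\os y=z\}$. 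This parity dependence is the first departure from Bose, where $|P_z|$ was constant.

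Next, for a fixed point $(z,i)$ I would enumerate the containing blocks: the unique Type~1 block $B_z$ (present only when $z<m/2$); the Type~2 blocks in which $(z,i)$ occupies a source slot, namely $B_{z,y,i}$ with $y>z$ and $B_{x,z,i}$ with $x<z$; the Type~2 blocks in which $(z,i)$ is the $\os$-image, namely $B_{x,y,j}$ with $x\os y = z$ and $j\define(i-1)\bmod 3$; and exactly one Type~3 block, which is $B_{z,j}$ when $z<m/2$ (there $(z,i)$ plays the third-coordinate role) and $B_{z-m/2,i}$ when $z\ge m/2$ (there $(z,i)$ plays the middle role). Summing the corresponding labels, and evaluating the $\os$-image contribution via the preimage lemma, yields $\ls(z,i)$ as a cubic in $n$ depending on $z$, $i$, and $j$, split into the cases $z\le m/2-1$ and $z\ge m/2$. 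I would separately sum the $3m/2$ Type~3 labels to bound $\ls(\infty)$: since those are the largest labels, this sum has leading term $\tfrac{144}{1728}n^3=\tfrac{1}{12}n^3$, well above the leading term $\tfrac{55}{1728}n^3$ of $f_{SYXI}(n)$, so the $\infty$ point is never the minimizer.

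Finally, mirroring the Bose argument, I would establish a reduction of the form $\ls(z,i) > \ls(z-m/2,i)$ for $z\ge m/2$, confining the search for the minimum to $z\in[0,m/2-1]$. On that range I would set $g(n,z,i)\define \ls(z,i)-f_{SYXI}(n)$ and factor its $z$-dependence as $(z-z^\star)\cdot(\text{quadratic in }z)$; bounding the relevant root $z_2$ by $z^\star-1\le z_2\le z^\star$ then forces the two factors to agree in sign at every integer $z$, giving $g\ge 0$ with equality at the claimed minimizer, which turns out to have $i\in\{0,1\}$. This step is carried out once for $(n-1)/3\equiv 0 \pmod 4$ and once for $(n-1)/3\equiv 2 \pmod 4$, producing the two branches of $f_{SYXI}$. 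I expect the main obstacle to be not the quadratic sign analysis, which is routine, but the bookkeeping in assembling $\ls(z,i)$: the dual role of a point as either a middle or a third coordinate of a Type~3 block, the parity-dependent preimage sizes, and above all the piecewise-in-$i$ labeling of the Type~3 blocks, which destroys the clean linear $i,j$-dependence exploited in the Bose proof and instead forces one to compare the three residues of $i$ directly in order to locate the minimizing color. Once $\ls(z,i)$ is correctly assembled, the remainder of the argument transcribes the proof of Theorem~\ref{thm:Bose_YXI}.
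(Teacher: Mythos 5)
Your proposal is correct and follows essentially the same route as the paper's proof: the same label formulas for the three block types, the same $\os$-preimage lemma with its $z<m/2$ versus $z\ge m/2$ dichotomy, the same six-way enumeration of blocks through a point $(z,i)$, the same dismissal of $\infty$ via the size of its block labels, the same reduction to $z<m/2$, and the same $(z-z^\star)\times(\text{quadratic})$ sign analysis split over the two congruence classes of $(n-1)/3 \bmod 4$. You also correctly anticipate the one technical wrinkle the paper must handle, namely bounding the piecewise-in-$i$ contribution of the Type-3 labels before the factorization can proceed.
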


\begin{lemma} 
\label{lem:Skolem_Bx}
In the Skolem YXI-labeling, the labels of the blocks $B_x$, $x \in [0,m/2-1]$, equal
\[
\ls(B_x) = x.
\]
\end{lemma}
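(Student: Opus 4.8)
The plan is to reuse the elementary counting principle already exploited in the proof of Lemma~\ref{lem:Bose_Bxyi}, namely that under any YXI-style block ordering the label of a block is exactly the number of blocks that precede it in the prescribed order. First I would read off from the Skolem YXI-labeling specified in Theorem~\ref{thm:Skolem_YXI} that the Type~1 blocks $B_x$ form the \emph{initial} segment of the ordering and are listed in \emph{increasing} order of $x$, namely $x = 0, 1, \ldots, m/2-1$, before any Type~2 block $B_{x,y,i}$ or Type~3 block $B_{x,i}$ appears.

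Consequently, the blocks that strictly precede $B_x$ are exactly $B_0, B_1, \ldots, B_{x-1}$, and there are precisely $x$ of them. Since the label of $B_x$ equals this count, we obtain $\ls(B_x) = x$ at once, with no summation or case analysis required.

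There is essentially no obstacle here: the only point needing care is to confirm directly from the statement of Theorem~\ref{thm:Skolem_YXI} that the Type~1 blocks genuinely occupy the initial segment of the ordering and are indexed in natural increasing order. This is exactly what distinguishes the Skolem case from the Bose case, where $B_x$ is indexed in decreasing order ($x = m-1, m-2, \ldots, 0$); that reversal is precisely why the Bose analogue in Lemma~\ref{lem:Bose_Bx} yields the label $m-1-x$, whereas the Skolem YXI-labeling yields the simpler value $x$.
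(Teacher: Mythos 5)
Your proposal is correct and matches the argument the paper implicitly relies on (it omits the proof as immediate): since the Skolem YXI-labeling lists the Type~1 blocks first and in increasing order of $x$, the block $B_x$ is preceded by exactly the $x$ blocks $B_0,\ldots,B_{x-1}$, so $\ls(B_x)=x$. Your side remark contrasting this with the reversed ordering in the Bose YXI-labeling (which yields $m-1-x$) is also accurate.
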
 

\begin{lemma} 
\label{lem:Skolem_Bxyi}
In the Skolem YXI-labeling, the labels of the blocks $B_{x,y,i}$, $0 \leq x < y \leq m-1$, $i \in [0,2]$, equal
\[
\ls(B_{x,y,i}) = \dfrac{m}{2} + i + 3x + \dfrac{3}{2}y(y-1).
\]
\end{lemma}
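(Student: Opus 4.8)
The plan is to exploit the fact that in a consecutive (contiguous) labeling like the Skolem YXI-labeling, the label assigned to any block is precisely the number of blocks that precede it in the prescribed order. Thus I would compute $\ls(B_{x,y,i})$ by tallying, separately, the contributions of the Type~1 blocks and of the Type~2 blocks that appear before $B_{x,y,i}$. The overall structure mirrors the proof of Lemma~\ref{lem:Bose_Bxyi} almost verbatim, so I would model the write-up on it.

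First I would account for the Type~1 blocks. Since all Type~1 blocks $B_0, B_1, \ldots, B_{m/2-1}$ precede every Type~2 block in the Skolem YXI-ordering, they contribute exactly $m/2$ to the label, by Lemma~\ref{lem:Skolem_Bx} (or directly from the definition of the ordering). This is the \emph{only} place where the computation departs from the Bose case of Lemma~\ref{lem:Bose_Bxyi}, where there are $m$ rather than $m/2$ Type~1 blocks. Hence the single substitution $m \mapsto m/2$ in the Type~1 tally converts the Bose formula $m + i + 3x + \frac{3}{2}y(y-1)$ into the claimed Skolem formula.

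Next I would count the Type~2 blocks $B_{x',y',i'}$ preceding $B_{x,y,i}$, using that the ordering runs over $y'$ first, then $x'$, then $i'$. Blocks with $y' < y$ number $\sum_{y'=1}^{y-1} 3y' = \frac{3}{2}y(y-1)$, since for each such $y'$ there are $y'$ admissible values of $x'$ (namely $x' \in [0,y'-1]$) and $3$ values of $i'$; blocks with $y' = y$ and $x' < x$ number $3x$; and blocks with $y' = y$, $x' = x$, and $i' < i$ number $i$. Adding these three quantities to the $m/2$ Type~1 blocks yields
\[
\ls(B_{x,y,i}) = \dfrac{m}{2} + \dfrac{3}{2}y(y-1) + 3x + i,
\]
which is the asserted expression after reordering the summands.

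I do not expect a genuine obstacle here: the argument is a routine lexicographic count, structurally identical to that of Lemma~\ref{lem:Bose_Bxyi}. The only point requiring care is getting the Type~1 count right as $m/2$ (not $m$), since in the Skolem construction the Type~1 blocks $B_x$ range only over $x \in [0,m/2-1]$; a careless transcription from the Bose case would introduce an error of $m/2$ in the label.
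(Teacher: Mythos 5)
Your proof is correct and takes exactly the approach the paper intends: the paper omits the proof of this lemma but establishes the analogous Bose statement (Lemma~\ref{lem:Bose_Bxyi}) by the identical lexicographic count of preceding blocks, and your computation mirrors it. You correctly identify the sole difference, namely that the Type~1 blocks contribute $m/2$ rather than $m$ because in the Skolem Construction the blocks $B_x$ range only over $x \in [0,m/2-1]$.
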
 

Note that the labels of the blocks $B_{x,i}$, $x \in [0,m/2-1]$, $i \in [0,2]$, are
given explicitly in the statement of Theorem~\ref{thm:Skolem_YXI}. 

\begin{lemma} 
\label{lem:Skolem_inverse}
For any $z \in [0,m-1]$, the set $Q_z = \{(x,y) \colon 0 \leq x < y \leq m-1, z = x \os y\}$ has the explicit form
\[
Q_z = 
\begin{cases}
\{(x,2z-x) \colon x \in [0,z-1]\} \cup \left\{(x,2z-x+m) \colon x \in \left[2z+1,z+\frac{m}{2}-1\right]\right\},& \text{ if } z < \frac{m}{2},\\
\left\{(x,2z-x-m+1) \colon x \in \left[0,z-\frac{m}{2}\right]\right\} \cup \left\{(x,2z-x+1) \colon x \in \left[2z-m+2,z\right]\right\},& \text{ if } z \geq \frac{m}{2}.\\
\end{cases}
\]
\end{lemma}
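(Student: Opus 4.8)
The plan is to follow the template of the proof of Lemma~\ref{lem:Bose_inverse}, but to account for the fact that here the fibers of $\os$ are not all of the same size. The key observation is that, by its definition, $x \os y$ depends only on the residue $s \define (x+y) \bmod m$; hence by Lemma~\ref{lem:S0y} we have $x \os y = 0 \os s$, which equals $s/2$ when $s$ is even and $(s+m-1)/2$ when $s$ is odd. Solving $0 \os s = z$ for $s \in [0,m-1]$ therefore yields a \emph{unique} admissible residue: since $m$ is even, $s = 2z$ is the (even) solution when $z < m/2$, and $s = 2z-m+1$ is the (odd) solution when $z \ge m/2$. This reduces the problem to enumerating all pairs $(x,y)$ with $0 \le x < y \le m-1$ whose coordinate sum is congruent to this fixed $s$ modulo $m$.

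First I would carry out this enumeration. Because $x+y$ ranges over $[1,2m-3]$, the congruence $x+y \equiv s \pmod m$ admits only two possible lifts, namely $x+y=s$ and $x+y=s+m$; no further wraparound is possible. For each lift I would write $y$ in terms of $x$ and impose the constraints $x \ge 0$, $x < y$, and $y \le m-1$, each of which translates into a linear bound on $x$. In the case $z < m/2$ (so $s=2z$), the lift $x+y=s$ gives $(x,2z-x)$ with $x \in [0,z-1]$, while the lift $x+y=s+m$ gives $(x,2z-x+m)$ with $x \in [2z+1,\,z+\tfrac{m}{2}-1]$; in the case $z \ge m/2$ (so $s=2z-m+1$) the two lifts give $(x,2z-x-m+1)$ with $x \in [0,\,z-\tfrac{m}{2}]$ and $(x,2z-x+1)$ with $x \in [2z-m+2,z]$, respectively. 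These are exactly the four sublists in the statement.

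To conclude that these lists are complete, I would note that the two sublists in each case have disjoint $x$-ranges, so all listed pairs are distinct, and that the above derivation shows every element of $Q_z$ must appear on one of the lists. As an independent consistency check, I would verify the counts $|Q_z| = \tfrac{m}{2}-1$ for $z<\tfrac{m}{2}$ and $|Q_z| = \tfrac{m}{2}$ for $z \ge \tfrac{m}{2}$, and confirm that $\sum_{z=0}^{m-1}|Q_z| = \tfrac{m}{2}\bigl(\tfrac{m}{2}-1\bigr)+\tfrac{m}{2}\cdot\tfrac{m}{2} = \binom{m}{2}$, which matches the total number of admissible pairs $(x,y)$ with $0 \le x < y \le m-1$.

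I expect the main obstacle to lie in the boundary bookkeeping of the wraparound lift: one must check that the interval endpoints $z+\tfrac{m}{2}-1$, $2z+1$, $2z-m+2$, and $z-\tfrac{m}{2}$ are the correct integers (using that $m$ is even, so $\tfrac{m}{2}$ is an integer and the parities of $s$ work out cleanly), and that the transition at $z=\tfrac{m}{2}$ is handled so that no pair is dropped or double-counted between the two cases. The reduction to a unique residue $s$ via Lemma~\ref{lem:S0y} is what makes this tractable; once $s$ is pinned down, the remaining steps are elementary interval arithmetic.
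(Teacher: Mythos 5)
Your proposal is correct. The reduction to a single admissible residue $s=(x+y)\bmod m$ via Lemma~\ref{lem:S0y} is sound: because $m$ is even, the even residues $s$ give $0\os s=s/2\in[0,\tfrac{m}{2}-1]$ and the odd residues give $0\os s=(s+m-1)/2\in[\tfrac{m}{2},m-1]$, so $s=2z$ for $z<\tfrac{m}{2}$ and $s=2z-m+1$ for $z\ge\tfrac{m}{2}$ are indeed the unique solutions, and the two lifts $x+y=s$ and $x+y=s+m$ exhaust all possibilities since $x+y\le 2m-3$. Your interval endpoints and the fiber counts $\tfrac{m}{2}-1$ and $\tfrac{m}{2}$, summing to $\binom{m}{2}$, all check out.

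The paper omits the proof of this lemma entirely; the nearest model is its proof of the Bose analogue (Lemma~\ref{lem:Bose_inverse}), which runs in the opposite direction: it verifies that each listed pair lies in the fiber, notes the listed pairs are distinct, and concludes completeness by matching the count against the known fiber size $|P_z|=(m-1)/2$. Your argument instead derives the lists forward by solving the congruence and translating the constraints $0\le x<y\le m-1$ into interval bounds on $x$. The trade-off: the paper's verify-and-count template is shorter but presupposes the fiber size, which is uniform in the Bose case but \emph{not} in the Skolem case ($|Q_z|$ is $\tfrac{m}{2}-1$ or $\tfrac{m}{2}$ depending on $z$), so the counting step would itself need justification there; your derivation produces the fiber sizes as a byproduct and needs no a priori input. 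Either route is acceptable, but yours is the more self-contained one for this lemma.
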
 

\begin{lemma} 
\label{lem:sS}
The sum of the labels of the blocks containing $(z,i)$, $z \in [0,m-1]$, $i \in [0,2]$, according to the Skolem YXI-labeling, is given as follows. 

For $z < \frac{m}{2}$, 
\[
\begin{split}
\ls(z,i) =\ 
& \dfrac{5}{144}n^3 - \dfrac{2z+7}{48}n^2 - \dfrac{36z^2-228z-3-48i-24j}{144}n\\ 
&+ \dfrac{576z^3-828z^2-1518z+13-192i-168j}{144}
+ \begin{cases}
(n-1)/6 + 2z, &j = 0,\\
z, &j = 1,\\
(n-1)/6 + 2z + 1, &j = 2,
\end{cases}
\end{split}
\]
where $j \define (i - 1) \pmod 3 \in [0,2]$. 

For $z \geq \frac{m}{2}$, 
\[
\begin{split}
\ls(z,i) =\ 
& \dfrac{1}{48}n^3 + \dfrac{14z-5}{48}n^2 - \dfrac{84z^2+4z-47-16i-8j}{48}n\\ 
&+ \dfrac{192z^3+84z^2-346z-187-64i-8j}{48}
+ \begin{cases}
2z, &i = 0,\\
z, &i = 1,\\
2z + 1, &i = 2,
\end{cases}
\end{split}
\]
where again $j \define (i - 1) \pmod 3 \in [0,2]$.
\end{lemma}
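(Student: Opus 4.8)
The plan is to mirror the proof of Lemma~\ref{lem:sB} almost verbatim, replacing the Bose operation $\ob$ by the Skolem operation $\os$ and accounting for the extra family of blocks through $\infty$. First I would fix a point $(z,i)$ and enumerate every block of the Skolem system that contains it. These fall into the following groups: the single Type-1 block $B_z$ (present only when $z \le m/2-1$); the Type-2 blocks in which $(z,i)$ is the smaller first coordinate, namely $B_{z,y,i}$ with $z < y \le m-1$; the Type-2 blocks in which it is the larger first coordinate, $B_{x,z,i}$ with $0 \le x < z$; the Type-2 blocks in which it is the combined coordinate, $B_{x,y,j}$ with $x \os y = z$ and $j \define (i-1) \pmod 3$; and finally the Type-3 (through-$\infty$) blocks containing $(z,i)$. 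For the label of each block I would substitute the closed forms of Lemmas~\ref{lem:Skolem_Bx} and~\ref{lem:Skolem_Bxyi}, together with the explicit labels of the $B_{x,i}$ given in the statement of Theorem~\ref{thm:Skolem_YXI}.

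The contributions of the first three groups are unconditional: the Type-1 term is just $z$ (or absent when $z \ge m/2$), while the two plain Type-2 sums are evaluated by the same $\sum y$, $\sum y^2$ identities used in~\eqref{eq:Bose_Type2} and~\eqref{eq:Bose_Type3}, now with $m/2$ in place of $m$ as the base label offset. The combined-coordinate term is the counterpart of~\eqref{eq:Bose_Type4_Case1} and~\eqref{eq:Bose_Type4_Case2}: here I would invoke Lemma~\ref{lem:Skolem_inverse} to list the pairs in $Q_z$ and split into the two regimes $z < m/2$ and $z \ge m/2$, since $Q_z$ has two different explicit descriptions. In each regime the sum over $Q_z$ reduces to a handful of arithmetic and quadratic progressions in $x$, which collapse to a cubic in $z$ and $m$; the level index of these blocks is $j$, which is exactly why $j$ enters the final formula.

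The genuinely new ingredient is the Type-3 blocks. I would observe that $(z,i)$ occurs in a through-$\infty$ block in exactly one way, and which way depends on the same threshold, namely whether $z < m/2$ or $z \ge m/2$: when $z < m/2$ the point appears as the $(x,\,i+1 \pmod 3)$ entry of $B_{z,j}$, and when $z \ge m/2$ it appears as the $(x+m/2,\,i)$ entry of $B_{z-m/2,\,i}$. Reading off the explicit $B_{x,i}$ labels then yields precisely the case-dependent residual terms displayed in the statement, once the common additive constant $\tfrac{n(n-1)}{6}-\tfrac{n-1}{2}$ and the offsets coming from $m/2 = (n-1)/6$ are absorbed into the polynomial part. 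This is the step that creates the two-case structure of the final formula, including the difference between the $j$-indexed residual (for $z<m/2$) and the $i$-indexed residual (for $z \ge m/2$).

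Finally I would add the four (or five) contributions, substitute $m = (n-1)/3$, and collect terms to obtain the two stated cubics. I expect the main obstacle to be purely clerical: keeping the boundary indices of the $Q_z$-sums correct in each regime and not dropping the $m/2$-shifts when passing between the two descriptions of the Type-3 incidence, so that the residual terms land in their claimed form. As a guard against algebra slips I would check the formula against the worked $n=13$ instance of Example~\ref{ex:Skolem} (where $(n-1)/3 = 4 \equiv 0 \pmod 4$, so the first case applies) before trusting the general expression.
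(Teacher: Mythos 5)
Your proposal follows essentially the same route as the paper's own (sketched) proof: enumerate the blocks through $(z,i)$ by role (the unique $B_z$ when $z<m/2$, the two plain $B_{\cdot,\cdot,i}$ families, the $Q_z$-indexed blocks at level $j$ via Lemma~\ref{lem:Skolem_inverse}, and the single through-$\infty$ block whose form depends on whether $z<m/2$ or $z\ge m/2$), substitute the closed-form labels, and sum with $m=(n-1)/3$. The decomposition, the source of the two-case structure, and the origin of the $i$- versus $j$-indexed residual terms all match the paper, so the proposal is correct.
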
 
\begin{proof}[Sketch of the Proof]
The blocks containing $(z,i)$ include:
\begin{description}
	\item[Type 1:] $B_z = \{(z,0),(z,1),(z,2)\}$, if $0 \leq z < \frac{m}{2}$.  
	\item[Type 2:] $B_{z,y,i} = \{(z,i),(y,i),(z\os y,(i+1) \pmod 3)\}$, for $0 \leq z < y \leq m-1$.
	\item[Type 3:] $B_{x,z,i} = \{(x,i),(z,i),(x\os z,(i+1) \pmod 3)\}$, for $0 \leq x < z \leq m-1$.
	\item[Type 4:] $B_{x,y,j} = \{(x,j),(y,j),(z,i)\}$, for $0 \leq x < y \leq m-1$ such that $x\os y = z$.
	\item[Type 5:] $B_{z,j} = \{\infty, (z+\frac{m}{2},j), (z,i)\}$, where $j \define (i - 1) \pmod 3 \in [0,2]$, where $z < \frac{m}{2}$.  
	\item[Type 6:] $B_{z-\frac{m}{2},i} = \{\infty,(z,i),(z-\frac{m}{2},(i+1) \pmod 3)\}$, for $z \geq \frac{m}{2}$.
\end{description}
Based on Lemmas~\ref{lem:Skolem_Bx},~\ref{lem:Skolem_Bxyi}, and~\ref{lem:Skolem_inverse}, one can compute the sums of labels of blocks of each type as follows. 

\nin\textbf{Type 1:}
\begin{equation} 
\label{eq:Skolem_Type1}
\ell^{\text{T1}}_S(z,i) \define \lb(B_z) = z. 
\end{equation} 
\textbf{Type 2:}
\begin{equation} 
\label{eq:Skolem_Type2}
\begin{aligned}
\ell^{\text{T2}}_S(z,i) &\define \sum_{y = z+1}^{m-1} \ls\big(B_{z,y,i}\big)
= \sum_{y = z+1}^{m-1} \Big(\dfrac{m}{2} + i + 3z + \dfrac{3}{2}y(y-1)\Big)\\
&= \dfrac{1}{2}\Big(m^3 - 2m^2 + (5z+2i+1)m - (z^3+6z^2+5z+2iz+2i)\Big). 
\end{aligned}
\end{equation} 
\textbf{Type 3:}
\begin{equation} 
\label{eq:Skolem_Type3}
\begin{aligned}
\ell^{\text{T3}}_S(z,i) &\define \sum_{x = 0}^{z-1} \ls\big(B_{x,z,i}\big)
= \sum_{x = 0}^{z-1} \Big(\dfrac{m}{2} + i + 3x + \dfrac{3}{2}z(z-1)\Big)
= \dfrac{z}{2}m + \Big(\dfrac{3z^3}{2}-\dfrac{3z}{2}+iz\Big). 
\end{aligned}
\end{equation} 
\textbf{Type 4:}
The sums of labels of blocks of Type~4 containing $(z,i)$ depends on whether $z < \frac{m}{2}$ or $z \geq \frac{m}{2}$. For $z < \frac{m}{2}$, we have
\begin{equation}
\label{eq:Skolem_Type4_Case1}
\begin{split}
\ell^{\text{T4}}_S(z,i) &\define \sum_{x = 0}^{z-1} \ls\big(B_{x,2z-x,j}\big)
+ \sum_{x = 2z+1}^{z+\frac{m}{2}-1} \ls\big(B_{x,2z-x+m,j}\big)
= \sum_{x = 0}^{z-1}\Big(\dfrac{m}{2}+j+3x+\dfrac{3}{2}(2z-x)(2z-x-1)\Big)\\
&= \dfrac{7}{16}m^3 - \dfrac{3z+7}{8}m^2 - \dfrac{3z^2-6z-2j}{4}m + 
(3z^3-3z^2-6z-j). 
\end{split}
\end{equation} 
For $z \geq \frac{m}{2}$, we have
\begin{equation}
\label{eq:Skolem_Type4_Case2}
\begin{split}
\ell^{\text{T4}}_S(z,i) &\define \sum_{x = 0}^{z-\frac{m}{2}} \ls\big(B_{x,2z-x-m+1,j}\big)
+ \sum_{x = 2z-m+2}^{z} \ls\big(B_{x,2z-x+1,j}\big)\\
&= \sum_{x = 0}^{z-\frac{m}{2}}\Big(\dfrac{m}{2}+j+3x+\dfrac{3}{2}(2z-x-m+1)(2z-x-m)\Big)\\
&+ \sum_{x = 2z-m+2}^{z}\Big(\dfrac{m}{2}+j+3x+\dfrac{3}{2}(2z-x+1)(2z-x)\Big)\\
&= \dfrac{1}{16}m^3 + \dfrac{7(3z-1)}{8}m^2 - \dfrac{21z^2+6z-2j-14}{4}m + 
3(z-1)(z+1)^2.  
\end{split}
\end{equation} 
\textbf{Type 5:}
\begin{equation}
\label{eq:Skolem_Type5}
\ell^{\text{T4}}_S(z,i) \define \ls(B_{z,j}) = 
\dfrac{n(n-1)}{6} - \dfrac{n-1}{2} + 
\begin{cases}
(n-1)/6 + 2z, &j = 0,\\
z, &j = 1,\\
(n-1)/6 + 2z + 1, &j = 2.
\end{cases}
\end{equation} 
\textbf{Type 6:}
\begin{equation}
\begin{split}
\label{eq:Skolem_Type6}
\ell^{\text{T4}}_S(z,i) &\define \ls(B_{z-\frac{m}{2},j}) = 
\dfrac{n(n-1)}{6} + 
\begin{cases}
m/2 + 2(z-m/2), &i = 0\\
z-m/2, &i = 1\\
m/2 + 2(z-m/2) + 1, &i = 2
\end{cases}\\
&=
\dfrac{(n-1)(n-4)}{6} + 
\begin{cases}
2z, &i = 0,\\
z, &i = 1,\\
2z + 1, &i = 2.
\end{cases}
\end{split}
\end{equation} 
Finally, to compute $\ls(z,i)$, we sum up \eqref{eq:Skolem_Type1}, \eqref{eq:Skolem_Type2}, \eqref{eq:Skolem_Type3}, \eqref{eq:Skolem_Type4_Case1}, and \eqref{eq:Skolem_Type5} for $z < m/2$, and \eqref{eq:Skolem_Type2}, \eqref{eq:Skolem_Type3}, \eqref{eq:Skolem_Type4_Case2}, and \eqref{eq:Skolem_Type6} for $z \geq m/2$. The proof follows by replacing $m$ with $(n-1)/3$ in the obtained sums. 
\end{proof} 

\begin{proof}[Proof of Theorem~\ref{thm:Skolem_YXI} (Sketch)] 
The proof follows along the same lines as the corresponding proof of Theorem~\ref{thm:Bose_YXI}, with some slightly more complicated technical details. Note that we can ignore the point $\infty$ because the $\frac{n-1}{2}$ blocks containing this point have the largest possible labels from $\frac{n(n-1)}{6}-\frac{n-1}{2}$ to $\frac{n(n-1)}{6}-1$, the sum of which is clearly larger than the sum of the labels of the blocks containing any other point $(z,i)$. It also suffices to show that $\ls(z,i) \geq f_{SYXI}(n)$ for every $z < \frac{m}{2}$ and $i \in [0,2]$.

In order to evaluate the difference $\ls(z,i) - f_{SYXI}(n)$, we need to eliminate the part of $\ls(z,i)$ that involves $i$ and $j$. More precisely, a lower bound on the expression 
\[
\dfrac{48i+24j}{144}n
- \dfrac{192i+168j}{144}
+ \begin{cases}
(n-1)/6 + 2z, &j = 0,\\
z, &j = 1,\\
(n-1)/6 + 2z + 1, &j = 2,
\end{cases}
\]
that depends only on $n$ and $z$ must be established. We can simplify and evaluate this expression as follows
\[
\dfrac{2i+j}{6}(n-4) - \dfrac{j}{2} 
+ \begin{cases}
(n-1)/6 + 2z, &j = 0\\
z, &j = 1\\
(n-1)/6 + 2z + 1, &j = 2
\end{cases}
\qquad \geq \dfrac{n-4}{3} + \dfrac{n-1}{6} + 2z,
\]
where the inequality holds when $j=0,2$, or equivalently, when $i = (j+1) \pmod 3 \in \{0,1\}$. To prove the inequality when $j = 1$, we use the fact that $n \geq 7$ and $z \leq \frac{m}{2}-1 = \frac{n-7}{6}$. Therefore, for $z < \frac{m}{2}$ it holds that 
\[
\begin{split}
\ls(z,i) &\geq \dfrac{5}{144}n^3 - \dfrac{2z+7}{48}n^2 - \dfrac{36z^2-228z-3}{144}n
+ \dfrac{576z^3-828z^2-1518z+13}{144}\\
&\ + \Big(\dfrac{n-4}{3} + \dfrac{n-1}{6} + 2z\Big)\\
&= \dfrac{5}{144}n^3 - \dfrac{2z+7}{48}n^2 - \dfrac{36z^2-228z-75}{144}n
+ \dfrac{576z^3-828z^2-1230z-203}{144}.
\end{split}
\] 
Hence, for $(n-1)/3 \equiv 0 \pmod 4$, we have
\[
\ls(z,i) - f_{SYXI}(n) \geq \Big(z - \dfrac{n-1}{12}\Big)
\Big( 4z^2 + \dfrac{n-73}{12}z - \dfrac{5n^2-144n+1157}{144} \Big),
\]
and for $(n-1)/3 \equiv 2 \pmod 4$, we have
\[
\ls(z,i) - f_{SYXI}(n) \geq \Big(z - \dfrac{n-7}{12}\Big)
\Big( 4z^2 + \dfrac{n-97}{12}z - \dfrac{5n^2-124n+551}{144} \Big). 
\]
The same argument as in the proof of Theorem~\ref{thm:Bose_YXI} may be used to establish that $\ls(z,i) - f_{SYXI}(n) \geq 0$, for every valid $n \geq 7$. Moreover, equality holds when $z = \frac{n-1}{12}$ if $(n-1)/3 \equiv 0 \pmod 4$ and when $z = \frac{n-7}{12}$ if $(n-1)/3 \equiv 2 \pmod 4$ (and $i = 0,1$). This completes the proof of the theorem. 
\end{proof} 

It is straightforward to see that the dual max-sum of the Skolem $\stsn$ with the Skolem YXI-labeling equals
\[
g_{SYXI}(n) = \frac{1}{12}n^3 - \frac{7}{24}n^2 + \frac{1}{12}n + \frac{1}{8},
\] 
and that it corresponds to the sum of the labels of the blocks containing the point $\infty$.
As a consequence, the dual difference-sum and the dual ratio-sum are $\O\big(89n^3/1728\big)$ and $\O(144/55)$, respectively. 

The natural block ordering determined by the Skolem Construction produces an $\stsn$ with smaller dual min-sum compared to the Skolem YXI-labeling. The result is described in Theorem~\ref{thm:Skolem_dual}. We again omit the proof since is mostly involves algebraic manipulations. 

\begin{theorem}
\label{thm:Skolem_dual}
The dual of the Skolem $\stsn$, where the blocks are labeled from $0$ to $\frac{n(n-1)}{6}-1$ in the following order: $B_x$, $x = 0,1,\ldots, m/2-1$, followed by $B_{x,y,i}$, $x = 0,1,\ldots,m-2$, $y = x+1,x+2,\ldots,m-1$, $i = 0,1,2$, followed by $B_{x,i}$, $x = 0,1,\ldots,m/2-1$, $i = 0,1,2$, where $m = (n-1)/3$, has min-sum equal to
\[
f_S(n) = \dfrac{5}{432}n^3 + \dfrac{55}{144}n^2 - \dfrac{511}{144}n + \dfrac{3523}{432}, \quad n \geq 7.  
\]  
\end{theorem}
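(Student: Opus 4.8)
The plan is to follow the proof of Theorem~\ref{thm:Skolem_YXI} essentially line for line, the only substantive change being the block-label formulas induced by the new (XYI) ordering. First I would record the analogs of Lemmas~\ref{lem:Skolem_Bx} and~\ref{lem:Skolem_Bxyi}. The Type~1 labels are unchanged, $\ls(B_x) = x$ for $x \in [0,m/2-1]$. For the Type~2 blocks, since $x$ is now the outermost index, the label equals the number of preceding blocks,
\[
\ls(B_{x,y,i}) = \frac{m}{2} + 3\sum_{x'=0}^{x-1}(m-1-x') + 3(y-x-1) + i = \frac{m}{2} + i + 3y - 3 + 3x(m-2) - \frac{3}{2}x(x-1),
\]
so that, in contrast to the YXI case of Lemma~\ref{lem:Skolem_Bxyi}, the quadratic term is now carried by $x$ rather than $y$. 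The $B_{x,i}$ blocks come last and hence have the explicit labels $\frac{m}{2} + \frac{3}{2}m(m-1) + 3x + i$ for $x \in [0,m/2-1]$, $i \in [0,2]$.

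Next, exactly as in Lemma~\ref{lem:sS}, I would compute $\ls(z,i)$, the sum of labels of all blocks containing a fixed point $(z,i)$, by summing the contributions of the six block types enumerated there. The set $Q_z$ of pairs $(x,y)$ with $x \os y = z$ needed for the Type~4 blocks is construction-dependent only, so Lemma~\ref{lem:Skolem_inverse} applies unchanged and again forces the split into the two regimes $z < m/2$ and $z \ge m/2$. Because the quadratic term of the label is now carried by $x$, the individual partial sums acquire different coefficients than in Lemma~\ref{lem:sS}, though each remains at most cubic in $z$; collecting them and substituting $m = (n-1)/3$ yields a piecewise cubic $\ls(z,i)$.

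The final step is the minimization. As in Theorem~\ref{thm:Skolem_YXI}, the $\frac{n-1}{2}$ blocks through $\infty$ occupy the top of the ordering and hence carry the largest labels, so $\infty$ cannot be the minimizer and may be discarded; the $i$-dependent part of $\ls(z,i)$ is then bounded below uniformly in $i$ using $j \equiv i-1 \pmod 3$, leaving a polynomial in $n$ and $z$ alone. After reducing the range $z \ge m/2$ to the lower regime (by showing each such value exceeds $\ls(z',i)$ for a corresponding $z' < m/2$, as was done for $z \ge (m+1)/2$ in the proof of Theorem~\ref{thm:Bose_YXI}), one forms $\ls(z,i) - f_S(n)$, factors it as $(z - z^*)$ times a quadratic in $z$ possessing a single positive root in the feasible range, and invokes the same sign argument to conclude $\ls(z,i) \ge f_S(n)$, with equality at the minimizing point. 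The fact that $f_S(n)$ is a single cubic—rather than a pair of formulas indexed by the residue of $m$ modulo $4$, as for $f_{SYXI}$—signals that the minimizing $z$ here sits at a value independent of that residue (one expects the smallest labels, and hence the minimum, to accumulate at small $z$, since small-$x$ Type~2 blocks now receive the smallest labels), which makes this last step cleaner than in the YXI case.

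The main obstacle is purely the bookkeeping: carrying the revised label formulas through the six partial sums, correctly handling the regime boundary $z = m/2$ coming from $Q_z$, and producing both the exact cubic $f_S(n)$ and the correct factorization $(z - z^*)(\cdots)$. None of these steps is conceptually new—this is precisely why the authors omit the proof—but the algebra is delicate enough that the verification of the factorization, and of the boundary comparison between the two $z$-regimes, must be carried out carefully to land on the stated coefficients.
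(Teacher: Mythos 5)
The paper omits the proof of this theorem, saying only that it parallels Theorem~\ref{thm:Skolem_YXI}; your outline is exactly that intended adaptation, and your new label formulas are correct (for $n=13$ they give $B_{0,1,0}\mapsto 2$, $B_{2,3,0}\mapsto 17$, $B_{0,0}\mapsto 20$, yielding minimum $\ls((0,1))=52=f_S(13)$ attained at $z=0$, and likewise $f_S(7)=6$). One small remark: since the minimizer is $z^{*}=0$ rather than an interior point, the final step reduces to showing the quadratic cofactor of $z$ is nonnegative on all of $[0,m/2)$ — not that it has ``a single positive root in the feasible range'' — which is actually simpler than the two-sided root-location argument used for Theorem~\ref{thm:Bose_YXI}.
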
 

\section{Concluding Remarks and Open Problems} \label{sec:conclusions}

Motivated by distributed storage codes and access-balancing issues that accompany them, we introduced a new family of problems in combinatorial design theory pertaining to sum-constrained Steiner systems. The constraints on the sums of points in the blocks of a design, or similar constraints for the duals of the design, were addressed by permuting (relabeling) the points and blocks of two classes of Steiner systems, the Bose and Skolem triple systems. Optimal labelings achieving upper bounds were identified for the designs, while near-optimal labelings were constructed for dual designs. 

Many open problems in this new area of design theory remain, including:
\begin{enumerate}
\item Establishing which nonisomorphic Steiner triple systems have MaxMinSum labelings.
\item Tightening the bounds on difference-sum and ratio-sum in Section~\ref{sec:prelim} and constructing labelings that attain the bounds. 
\item Improving the block labelings for dual designs. 
\item Deriving extensions of the presented results for Steiner systems with block sizes $k>3$, as for example the quadruple systems of~\cite{hanani1960quadruple}, and for block designs in general. 
\end{enumerate}

\section*{Acknowledgments}
The work was supported in part by the NSF grants CCF 15-26875 and 4101-38050, as well as by 
NIH BD2K grant U01CA198943-02S1. The authors are grateful to one of the anonymous reviewers for running extensive simulations that suggest that the max-sum of the Bose mapping equals $8n/3-9$ and that the max-sum of the Skolem mapping equals $(8n-17)/3$ for sufficiently large $n$. They also acknowledge many useful discussions with Charles Colbourn.

\bibliographystyle{plain}
\bibliography{MinSumDesigns}

\end{document}